\theoremstyle{plain}
\newtheorem{thm}{Theorem}[subsection]
\newtheorem{lem}{Lemma}[subsection]
\newtheorem{prop}{Proposition}[subsection]
\newtheorem{conj}{Conjecture}[subsection]
\theoremstyle{definition}
\newcommand{\V}{\mathcal{V}}
\newcommand{\rank}{\textrm{rank}}
\newcommand{\Ext}{\operatorname{Ext}}
\newcommand{\Hom}{\operatorname{Hom}}
\newcommand{\Ann}{\operatorname{Ann}}
\newcommand{\MaxSpec}{\operatorname{MaxSpec}}
\newcommand{\Ker}{\textrm{Ker}}
\newcommand{\0}{\bar 0}
\renewcommand{\1}{\bar 1}
\newcommand{\HH}{\operatorname{H}}
\newcommand{\gl}{\mathfrak{gl}(n)}
\newcommand{\Stab}{\ensuremath{\operatorname{Stab}}}
\newcommand{\Norm}{\ensuremath{\operatorname{Norm}}}
\newcommand{\Lie}{\ensuremath{\operatorname{Lie}}}
\newcommand{\resstar}{\ensuremath{\operatorname{res}^{*}}}
\numberwithin{equation}{subsection}
\def\Z{{\mathbb Z}}
\def\:{\colon}
\def\a{\alpha}
\def\b{\mathfrak{b}}
\def\t{\mathfrak{t}}
\def\e{\mathfrak{e}}
\def\la{\lambda}
\def\La{\mathfrak{g}}
\def\Lta{\mathfrak{f}}
\def\Lsa{\mathfrak{h}}
\def\C{{\mathbb C}}
\newcommand{\fg}{\La}
\newcommand{\fe}{\e}
\newcommand{\ff}{\Lta}
\newcommand{\fh}{\Lsa}
\newcommand{\fa}{\mathfrak{a}}
\begin{document}
\title[Cohomology and Support Varieties for Lie Superalgebras of type $W(n)$]
{Cohomology and Support Varieties for Lie Superalgebras of type $W(n)$}

\author{Irfan Bagci }
\address{Department of Mathematics \\
            University of Georgia \\
            Athens, GA 30602}
\email{bagci@math.uga.edu}
\thanks{Research of the first author was partially supported by NSF grant DMS-0401431}
\author{Jonathan R. Kujawa}
\address{Department of Mathematics \\
            University of Oklahoma \\
            Norman, OK 73019}
\thanks{Research of the second author was partially supported by NSF grant
DMS-0734226}\
\email{kujawa@math.ou.edu}
\author{Daniel K. Nakano}
\address{Department of Mathematics \\
            University of Georgia \\
            Athens, GA 30602}
\thanks{Research of the third author was partially supported by NSF
grant  DMS-0654169}\
\email{nakano@math.uga.edu}

\date{\today}

\subjclass[2000]{Primary 17B56, 17B10; Secondary 13A50}

\keywords{}

\begin{abstract} Boe, Kujawa and Nakano \cite{BKN1, BKN2} recently investigated  
relative cohomology for classical Lie superalgebras and developed a theory 
of support varieties. The dimensions of these support varieties give a geometric
interpretation of the combinatorial notions of defect and atypicality 
due to Kac, Wakimoto, and Serganova. In this paper we calculate the cohomology 
ring of the Cartan type Lie superalgebra $W(n)$ relative to the degree zero component 
$W(n)_{0}$ and show that this ring is a finitely generated polynomial ring. This allows one 
to define support varieties for finite dimensional $W(n)$-supermodules which 
are completely reducible over $W(n)_{0}$. We calculate the support varieties 
of all simple supermodules in this category. Remarkably our computations coincide 
with the prior notion of atypicality for Cartan type superalgebras due to Serganova. 
We also present new results on the realizability of support varieties which 
hold for both classical and Cartan type superalgebras. 
\end{abstract}

\maketitle

\parskip=2pt

\section{Introduction} 

\subsection{} Let $\fg=\fg_{\0}\oplus \fg_{\bar{1}}$ be a finite dimensional 
simple Lie superalgebra over the complex numbers ${\mathbb C}$. 
In 1977 Kac provided a complete classification of these Lie superalgebras 
(cf. \cite{Kac}). The simple finite dimensional Lie superalgebras are divided into two types based 
on their degree $\0$ part: 
they are either \emph{classical} (when $\fg_{\0}$ is reductive) or \emph{of Cartan type} (otherwise). 
The Lie superalgebras of Cartan type consist of four infinite families of superalgebras: 
$W(n), S(n),\tilde{S}(n)$ and $H(n)$.

Let us first summarize what is known for the classical Lie superalgebras \cite{BKN1, BKN2}.  The first fundamental result is that the relative cohomology 
ring $\text{H}^\bullet(\La, \La_{\0};\C)$ is a finitely generated commutative ring.  Note that this result crucially depends on the 
reductivity of $\fg_{\0}$. By applying invariant theory 
results in \cite{LR} and \cite{dadokkac}, it was shown under mild conditions that a natural ``detecting'' subalgebra 
$\fe=\fe_{\0}\oplus \fe_{\bar{1}}$ of $\fg$ 
arises such that the restriction map in cohomology induces an isomorphism 
\[
R:=\text{H}^{\bullet}(\fg,\fg_{\0};{\mathbb C})
\cong \text{H}^{\bullet}(\fe,\fe_{\0};{\mathbb C})^{W},
\] 
where $W$ is a finite pseudoreflection group. The vector space dimension of the degree $\1$ part of the detecting subalgebra 
and the Krull dimension of $R$ both coincide with the combinatorial notion of the defect of $\fg$ previously 
introduced by Kac and Wakimoto \cite{kacwakimoto}. The fact that $R$ is finitely generated can be employed to 
define the cohomological support varieties ${\mathcal V}_{(\fe,\fe_{\0})}(M)$ and 
${\mathcal V}_{(\fg,\fg_{\0})}(M)$ for any finite dimensional $\fg$-supermodule $M$. 
The variety ${\mathcal V}_{(\fe,\fe_{\0})}(M)$ can be identified as a certain subvariety of 
$\fe_{\bar{1}}$ using a rank variety description \cite[Theorem 6.3.2]{BKN1}. For the Lie superalgebra $\fg=\mathfrak{gl}(m|n)$ 
the support varieties of all finite dimensional simple supermodules were computed in \cite{BKN2}. 
A remarkable consequence of this calculation is that the dimensions of the support varieties of a given simple 
supermodule (over $\fg$ or $\fe$) concides with the combinatorially defined degree of atypicality of the highest weight as defined by Kac and Serganova. 

\subsection{} In this paper we demonstrate that one can also use relative cohomology and 
support varieties in the setting of the Cartan type superalgebra $W(n)$. Recall that the Lie superalgebra $W(n)$ is the Lie 
superalgebra of superderivations of the exterior algebra $\Lambda (n)$ on $n$ generators.  Since $\Lambda(n)= \oplus_{k} \Lambda^{k}(n)$ has a natural $\Z$-grading given by total degree, one obtains a $\Z$-grading on the Lie superalgebra, $W(n)=\bigoplus _{i=-1}^{n-1}W(n)_i$, where $D \in W(n)$ is of degree $i$ if $D(\Lambda^{k}(n)) \subseteq \Lambda^{i+k}(n)$ for all $k \in \Z.$ The zero graded component $W(n)_0$ is isomorphic to $\gl$ and $W(n)\cong \Lambda (V)\otimes V^\ast$ as a $\gl$-module, where $V$ is the natural $n$ dimensional 
representation of $\gl$. 

The crucial difference between the Cartan type superalgebras and the classical superalgebras 
is that the $\fg_{\0}$ component is no longer reductive.  However, as was described above for $W(n),$ the Cartan type Lie algebras of types 
$W(n)$, $S(n)$, and $H(n)$ admit a ${\mathbb Z}$-grading: $\fg=\oplus_{i\in {\mathbb Z}} \fg_{i}.$  The grading is compatible with the $\Z_{2}$-grading in the sense that $\oplus_{i}\fg_{2i}=\fg_{\0}$ and $\oplus_{i}\fg_{2i+1}=\fg_{\1}$. Furthermore, the bracket respects the grading (i.e.\ $[\fg_{i},\fg_{j}] \subseteq \fg_{i+j}$ for all integers $i,j$).  In particular, $\fg_{0}$ is a reductive Lie algebra under this bracket.  It is then natural to consider the category of $\fg$-supermodules which are finitely semisimple over $\fg_{0}$.  All finite dimensional simple $\fg$-supermodules, for example, are objects in this category. 
Furthermore, as we will see, the reductiveness of $\fg_{0}$ implies the cohomology ring for this category is a finitely generated algebra. 

The paper is organized as follows. Set $(\La, \La_0)=(W(n),W(n)_0)$ and $G_0 \cong \text{GL}(n)$ to be the connected reductive group such that $\Lie (G_{0}) =\fg_{0}$ and such that the action of $G_{0}$ on $\fg$ differentiates to the adjoint action of $\fg_{0}$ on $\fg.$  In Sections~\ref{S:prelims}~--~\ref{S:OnWn} we briefly review basic facts on relative cohomology 
proved in \cite{BKN1} and on $W(n)$ from \cite{Ser} that will be needed for this paper. Section~\ref{S:CohominC}  is devoted to applying these results 
to the pair $(\La, \La_0)$. In particular, we use the representation theory of 
$\mathfrak{gl}(n)$ to show in Theorem~\ref{cohomologyring} that 
\[
R:=\HH^{\bullet}(\fg,\fg_{0};\C)
\]
can be identified with a ring of invariants and, consequently, is finitely generated. We also prove that when $M$ is a finite dimensional $\fg$-supermodule, $\HH^{\bullet}(\fg,\fg_{0};M)$ is a finitely generated $R$-module.  This proposition is the key first step to developing a theory of cohomological support varieties. In Section~\ref{S:invarianttheory} we invoke invariant theory results due to Luna and Richardson \cite{LR} 
to construct a detecting subsuperalgebra $\ff=\ff_{\0}\oplus \ff_{\bar{1}}$ for $\La$ so that the inclusion $\ff \hookrightarrow \fg$ induces the following isomorphism in cohomology,
$$R=\HH^\bullet(\La, \La_{0};\C)\cong \HH^\bullet(\ff,\ff_{\0};\C)^{\Sigma_{n-1}}$$
where $\Sigma_{n-1}$ is the symmetric group on $n-1$ letters. In particular this will imply $R$ is a polynomial ring in $n-1$ variables. We remark that there are some similarities in technique 
to Premet's \cite{Pr} earlier computation of the ring of invariant functions on the finite-dimensional 
simple Lie algebra $W(n,1)$ over an algebraically closed field of positive characteristic. 

Given a finite dimensional $\La$-supermodule, $M$, one can use the aforementioned finite generation results to define the 
cohomological support varieties $\V_{(\La, \La_0)}(M)$ and $\V_{(\ff, \ff_{\0})}(M)$. The properties 
of these varieties are described in Section~\ref{S:suppvar}. We also provide computations of these support varieties 
for all finite dimensional simple $\fg$-supermodules using the work of Serganova \cite{Ser}. 
Our results demonstrate that the dimension of these varieties geometrically realize the combinatorial notion of 
atypicality due to Serganova. Finally, in Section~\ref{S:realization} we prove a general realization theorem for 
conical subvarieties of the spectrum of $R$ which holds in both the classical case and for $W(n)$.  

We remark that Duflo and Serganova introduce associated varieties for finite dimensional supermodules of a Lie superalgebra in \cite{DS}.  Whether $\fg$ is classical or $W(n)$ it remains unclear what connection, if any, exists between their work and the cohomological support varieties considered in \cite{BKN1, BKN2} and here.

\subsection{Acknowledgements} The second author is grateful to the Mathematical Sciences Research Institute 
for their support and hospitality during the preparation of this manuscript. 

\section{Preliminaries}\label{S:prelims} 

\subsection{} For further details on Lie superalgebras and relative cohomology we refer the 
reader to \cite[\S2]{BKN1}. Throughout we work with the complex numbers $\C$ as the ground field. 
A finite dimensional Lie superalgebras is a $\Z_2$-graded vector space $\La=\La_{\0}\oplus \La_{\bar{1}}$ with a bracket
operation $[-,-] : \La\otimes \La \rightarrow  \La$ which preserves the $\Z_2$-grading and satisfies graded versions of the usual axioms for a Lie bracket.  In particular, note that $\La_{\0}$ is a Lie algebra upon restriction of the bracket.  If $\La$ is a Lie superalgebra, then we denote by $U(\fg )$ the universal enveloping superalgebra of $\fg$. As a special case of this setup we always view a Lie algebra as a Lie superalgebra concentrated in degree $\0.$

Given a Lie superalgebra $\fg$ the category of $\La$-supermodules has as objects the left $U(\La)$-modules 
which are $\Z_2$-graded and such that the action of $U(\fg )$ respects this grading.  By definition a subsupermodule of a supermodule, say $ N \subseteq M,$ satisfies $N_{r} = N \cap M_{r}$ for $r \in \Z_2.$   Morphisms are as described in \cite[Section 2.1]{BKN1}; in particular, we do not assume that morphisms preserve the $\Z_2$-grading. Consequently the category of $\La$-supermodules is not an abelian category.  However the graded (or underlying even category), consisting of the same objects but with only $\Z_{2}$-grading preserving morphisms, is 
an abelian category. This along with the parity change functor 
(which interchanges the $\Z_2$-grading of a supermodule) allows one to 
make use of the standard tools of homological algebra. Since $U(\fg)$ is a Hopf superalgebra one can use the antipode
(resp. coproduct) of $U(\La)$ to define a $\La$-supermodule structure on the linear dual of a supermodule 
denoted by  $M^\ast$ (resp. the tensor product of two supermodules over $\C$ denoted by  $M\otimes N$). As a matter of 
notation, for a homogeneous element $x$ in a $\Z_2$-graded vector space we write $\bar{x} \in \Z_{2}$ for the degree 
of the element.  We call $x$ \emph{even} if $\bar{x}=\0$ and \emph{odd} if $\bar{x}=\1.$

If $\fg$ is a Lie superalgebra, then a $\fg$-supermodule $M$ is 
\emph{finitely semisimple} if it is isomorphic to a direct sum of finite dimensional simple $\fg$-supermodules.
Let ${\mathfrak t}$ be a Lie subsuperalgebra of $\fg$ and let ${\mathcal C}=\mathcal{C}_{(\La, \t)}$ be the full subcategory of the category of all $\La$-supermodules which has its objects all $\fg$-supermodules which are finitely semisimple as $\t$-supermodules. The category $\mathcal{C}_{(\La, \t)}$ is closed under arbitrary direct sums, 
quotients, and finite tensor products (cf. \cite[3.1.6]{Kum}). Given $M$ and $N$ in $\mathcal{C}$, we write 
$\Ext_\mathcal{C}^d(M,N)$ for the degree $d$ extensions between $N$ and $M$ in $\mathcal{C}$.

\subsection{Relative Cohomology for Lie superalgebras}\label{SS:relcohom} In order to compute extensions 
in ${\mathcal C}$ we use a concrete realization of the relative cohomology for Lie superalgebras. 
Let $\La$ be a Lie superalgebra, $\t\subseteq \La$ be a Lie subsuperalgebra, and $M$ be a $\La$-supermodule in $\mathcal{C}.$ 
For $p\geq 0$ set
$$C^p(\La; M)=\Hom_{\C}(\Lambda_s^p(\La), M),$$
where $\Lambda^{p}_s(\La)$ is the super wedge product.  That is, $\Lambda_s^p(\La)$ is the $p$-fold tensor 
product of $\La$ modulo the $\La$-subsupermodule
generated by elements of the form
$$x_1\otimes \dotsb \otimes x_k\otimes x_{k+1}\otimes \dotsb \otimes x_p+(-1)^{\overline{x_k} \  
\overline{x_{k+1}}}x_1\otimes \dotsb \otimes x_{k+1}\otimes x_{k}\otimes \dotsb \otimes x_p$$
for homogeneous $x_1, \dotsc , x_p\in \La$. Therefore, $x_k, x_{k+1}$ skew commute unless both are odd 
in which case they commute. 

Let $d^p : C^p(\La; M) \rightarrow C^{p+1}(\La; M)$ 
be given by the formula: 

{\setlength\arraycolsep{2pt}\begin{eqnarray}\label{eq:cohom}d^p(\phi )(x_1\wedge  \dotsb  \wedge x_{p+1})&=&\sum_{i<j}(-1)^
{\sigma_{i,j}(x_1,\dotsc ,x_p)}\phi ([x_i, x_j] \ \wedge x_1\wedge  \dotsb  \wedge \hat {x}_i\wedge  \dotsb  \wedge 
\hat {x}_j\wedge \dotsb \wedge x_{p+1}) \nonumber \\ 
& & \quad +\sum _i(-1)^{\gamma (x_1,\dotsc ,x_p,\phi )}x_i\phi (x_1\wedge \dotsb  \wedge \hat {x}_i\wedge \dotsb  \wedge x_{p+1}),
\end{eqnarray}}
where $x_1,\dotsc ,x_{p+1}$ and $\phi$ are assumed to be homogeneous, and
\begin{align*}
\sigma _{i,j}(x_1,\dotsc ,x_p)&:=i+j+\overline{x_i}(\overline{x}_1+\dotsb +\overline{x}_{i-1})+\overline{x_j}
(\overline{x}_1+\dotsb +\overline{x}_{j-1}+\overline{x}_i), \\
\gamma _i(x_1,\dotsc ,x_p,\phi)&:=i+1+\overline{x}_i(\overline{x}_1+\dotsb +\overline{x}_{i-1}+\overline{\phi}).
\end{align*}

Ordinary Lie superalgebra cohomology is then defined as 
$$\HH^p(\La; M)=\Ker \  d^p/\text{Im} \  d^{p-1}.$$
The relative version of the above construction is given as follows. 
Define
$$C^p(\La, \t;M)=\Hom_\t(\Lambda_s^p(\La/\t),M).$$
Then the map $d^p$ induces a well defined map $d^p:C^p(\La,\t;M) \rightarrow C^{p+1}(\La,\t;M)$ and we define
$$\HH^p(\La,\t;M)=\Ker \ d^p/\text{Im} \ d^{p-1}.$$

\subsection{Relating Cohomology Theories} Let $R$ be an associative superalgebra and $S$ a subsuperalgebra. 
Given $R$-supermodules $M$ and $N$ one can define cohomology with respect to the pair $(R,S)$ which we denote 
by $\Ext^{\bullet}_{(R,S)}(M,N)$ (cf.\ \cite[Section 2.2]{BKN1}).  In particular, if ${\mathfrak t}$ is a Lie 
subsuperalgebra of $\fg$ 
then one can define cohomology for the pair $(U(\fg),U({\mathfrak t}))$. The following proposition 
relates the relative cohomology with the cohomology theories of $(U(\fg),U({\mathfrak t}))$ and 
${\mathcal C}_{(\fg,{\mathfrak t})}$. The proof follows from \cite[Proposition 2.4.1]{BKN1}; see also \cite{Kum} 
for the case of ordinary Lie algebras.

\begin{prop}\label{relativecoho} 
Let $\t$ be a Lie subsuperalgebra of $\La$, and $M,N$ be $\La$-supermodules in $\mathcal{C}=\mathcal{C}_{(\fg,\mathfrak{t})}$ 
and assume that $\La$ is finitely semisimple as a $\t$-supermodule under the adjoint action. Then,
\begin{itemize} 
\item[(a)] $\Ext_{(U(\La),U(\t))}^\bullet (M,N)\cong 
 \Ext_{(U(\La),U(\t))}^\bullet (\C,\Hom_{\C}(M, N))\cong \HH^\bullet (\La,\t;\Hom_{\C}(M, N));$
\item[(b)] $\Ext_{\mathcal C}^{\bullet}(M,N)\cong \Ext_{(U(\La),U(\t))}^{\bullet}(M,N).$
\end{itemize} 
\end{prop}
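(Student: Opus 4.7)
The plan is to follow the template of the proof of \cite[Proposition 2.4.1]{BKN1}, treating the three isomorphisms of the statement in turn. The standing hypothesis that $\La$ is finitely semisimple over $\t$ under the adjoint action will be used repeatedly to guarantee that the auxiliary modules constructed during the proof (coinduced modules, tensor products) remain objects of $\mathcal{C}$.

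For the first isomorphism in (a), I would begin with the natural adjunction
\[
\Hom_{U(\La)}(M, N) \;\cong\; \Hom_{U(\La)}(\C, \Hom_{\C}(M, N)),
\]
both sides identifying with the $\La$-invariants of $\Hom_{\C}(M, N)$. To upgrade this to $\Ext$, I take a $(U(\La), U(\t))$-relatively injective resolution $N \to I^{\bullet}$ and verify that $\Hom_{\C}(M, I^{\bullet})$ is again a relatively injective resolution of $\Hom_{\C}(M, N)$. Exactness is automatic since $M$ is free over $\C$; preservation of relative injectivity uses that relative injectives are summands of coinduced modules $\Hom_{U(\t)}(U(\La), V)$, together with the standard Frobenius-type identity showing $M^{\ast} \otimes \Hom_{U(\t)}(U(\La), V)$ is again coinduced. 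Applying $\Hom_{U(\La)}(\C, -)$ to this resolution and invoking the adjunction pointwise yields the isomorphism on Ext.

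For the second isomorphism in (a), I would use the super Chevalley--Eilenberg complex
\[
\cdots \to U(\La)\otimes_{U(\t)} \Lambda_s^{p}(\La/\t) \to \cdots \to U(\La)\otimes_{U(\t)} \C \to \C \to 0,
\]
which supplies a $(U(\La), U(\t))$-relatively projective resolution of $\C$ with differential mirroring the formula in \eqref{eq:cohom}. Applying $\Hom_{U(\La)}(-, \Hom_{\C}(M, N))$ and using the induction-restriction adjunction $\Hom_{U(\La)}(U(\La) \otimes_{U(\t)} V, W) \cong \Hom_{U(\t)}(V, W)$ produces exactly the cochain complex $C^{\bullet}(\La, \t; \Hom_{\C}(M, N))$ of \S\ref{SS:relcohom}, whose cohomology is $\HH^{\bullet}(\La, \t; \Hom_{\C}(M, N))$.

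For part (b), I would take a $(U(\La), U(\t))$-relatively projective resolution $P_{\bullet} \to M$ in which each $P_{k}$ is a summand of $U(\La) \otimes_{U(\t)} V_{k}$ for a $\t$-supermodule $V_{k}$, and argue that it can be chosen to lie inside $\mathcal{C}$ and to be projective in $\mathcal{C}$. The adjoint semisimplicity hypothesis combined with the PBW filtration forces $U(\La) \otimes_{U(\t)} V_{k}$ to be finitely semisimple over $\t$ whenever $V_{k}$ is, so the resolution lies in $\mathcal{C}$; and the universal property of induction implies that each such module is projective in $\mathcal{C}$ because any $\mathcal{C}$-surjection onto it splits as a $\t$-supermodule map and then lifts. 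Consequently both Ext theories are computed by the same complex $\Hom_{U(\La)}(P_{\bullet}, N)$, proving (b).

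The main obstacle in both parts is the verification that the relevant (co)induced modules $\Hom_{U(\t)}(U(\La), V)$ and $U(\La)\otimes_{U(\t)} V$ remain in $\mathcal{C}$ whenever $V$ is finitely semisimple over $\t$. By PBW this reduces to showing that the symmetric and exterior algebras on $\La/\t$ are finitely semisimple as $\t$-supermodules, which is exactly where the adjoint finite semisimplicity hypothesis enters.
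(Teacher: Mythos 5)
The paper does not actually prove Proposition \ref{relativecoho}; it simply refers to \cite[Proposition 2.4.1]{BKN1} and to \cite{Kum}. Your proof fills in exactly the argument that citation points to: adjunction plus the Frobenius tensor identity for the first isomorphism in (a), the super Chevalley--Eilenberg (Koszul) resolution for the second, and the comparison of relative projectives with $\mathcal{C}$-projectives for (b), with the adjoint finite-semisimplicity hypothesis used precisely to keep (co)induced modules in $\mathcal{C}$. This is correct and is the same approach as in the cited source. One small caveat: when you invoke $M^{\ast}\otimes \Hom_{U(\t)}(U(\La),V)$ being coinduced you are tacitly identifying $\Hom_{\C}(M,I)$ with $M^{\ast}\otimes I$, which requires $M$ finite dimensional; this matches the intended use (the paper applies the proposition only to finite dimensional $M$), but it is worth flagging as an implicit hypothesis.
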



\section{Representation Theory and Atypicality for $W(n)$}\label{S:OnWn}

\subsection{} We begin by recalling the definition of the simple Lie superalgebras of type $W(n)$. As 
a background source we refer the reader to \cite{Kac, Sch, Ser}. 

Assume that $n\geq 2$. The Lie superalgebra $W(n)$ may be described as follows.
Let $\Lambda (n)$ be the exterior algebra of the vector space $V=\C^n$. The algebra 
$\Lambda (n)=\oplus _{k=0}^{n}\Lambda^{k}(n)$ is an associative superalgebra of dimension $2^n$ with a $\Z$-grading given by total degree.
The $\Z_2$-grading is inherited from the $\Z$-grading by setting $\Lambda(n)_{\0} = \oplus_{k} \Lambda^{2k}(n)$ and $\Lambda(n)_{\1} = \oplus_{k} \Lambda^{2k+1}(n)$.    

A (homogeneous) \emph{superderivation} of $\Lambda(n)$ is a linear map $D: \Lambda(n) \to \Lambda(n)$ which satisfies $D(xy)=D(x)y + (-1)^{\overline{D}\; \overline{x}}xD(y)$ for all homogenous $x,y \in \Lambda(n).$  Set $W(n)$ to be the vector space of all superderivations of $\Lambda(n).$  Then $W(n)$ is a Lie superalgebra via the supercommutator bracket. Furthermore, $W(n)$ inherits a $\Z$-grading, 
$$W(n)=W(n)_{-1}\oplus W(n)_0\oplus \dotsb \oplus W(n)_{n-1},$$
from $\Lambda(n)$ by setting $W(n)_k$ to be the superderivations which increase the degree of 
a homogeneous element by $k$.  The $\Z_2$-grading on $W(n)$ is obtained from the $\Z$-grading by setting $W(n)_{\0}= \oplus_{k} W(n)_{2k}$ and $W(n)_{\1}= \oplus_{k} W(n)_{2k+1}.$  One can verify that $[W(n)_{k},W(n)_{l}] \subseteq W(n)_{k+l}$ for all $k,l \in \Z.$  Most importantly this implies $W(n)_{0}$ is a Lie algebra and $W(n)_{k}$ ($k=-1, \dotsc , n-1$) is a $W(n)_{0}$-module under the adjoint action.

Every element of $W(n)$ restricts to a linear map $V \rightarrow \Lambda(n)$. 
Conversely every element of $W(n)$ arises in this way and so one has an isomorphism of vector spaces
$$W(n)\cong \Lambda(n)\otimes V^\ast.$$ 
This identification will be useful for computations. 
Fix an ordered basis $\{\xi _1, \dotsc , \xi _n\}$  for $V$. For each ordered subset $I=\{i_1, \dotsc , i_s\}$ of  
$N=\{1, \dotsc , n\}$ with $i_1<i_2< \dotsb  <i_s$,
let $\xi _I=\xi _{i_1}\xi _{i_2}\dotsb \xi _{i_s}$. The set of all such $\xi_I$ forms a basis for $\Lambda(n)$. 
For $1\leq i\leq n$ let $\partial _i$ be the
element of $W(n)$ such that $\partial _i(\xi_j)=\delta _{ij}$. An explicit basis for $\Lambda(n)\otimes  V^\ast$ 
is then given by the set of all
$\xi_I\otimes \partial _i$, where here we identify $\partial _i$ with its restriction to $V$. 
We shall write $\xi_I\partial _i$ instead of $\xi_I\otimes \partial _i$. We use the isomorphism above to 
identify $W(n)$ and $\Lambda(n)\otimes  V^\ast$.

In particular, one has $W(n)_0\cong V\otimes V^\ast\cong \gl$ and the element $\xi_i\partial _j$ 
corresponds to the matrix unit $e_{i,j}$ (i.e.\ the matrix with a one in the $(i,j)$ position and zeros elsewhere). Also $W(n)_{-1}\cong V^\ast$ as a $W(n)_0$-module. In general the basis 
elements $\xi_I\partial _i$ belonging to $W(n)_{k}$
are those with $|I| =k+1.$  Thus $\dim_{\C} W(n)_k=n\binom{n}{k+1}$ and $\dim_{\C} W(n)=n2^n$.

With the exception of Section~\ref{S:realization} we use the following notational conventions throughout. 
Set $\fg=W(n)$ with $\fg_{i}=W(n)_{i}$, 
$i\in {\mathbb Z}$, and $\fg_{\bar{i}}=W(n)_{\bar{i}}$, $\bar{i}\in {\mathbb Z}_{2}$. Moreover, 
let $\La^{+}=\La_1\oplus \dotsb \oplus \La_{n-1}$ and  $\La^{-}=\La_{-1}$, so that $\fg$ has 
the lopsided triangular decomposition 
\[
\fg=\fg^{-}\oplus \fg_{0} \oplus \fg^{+}.
\] Throughout the paper all $\La$-supermodules will be assumed to be objects in the category $\mathcal{C}=\mathcal{C}_{(\La,\La_0)}$. 

\subsection{Kac supermodules and Finite Dimensional Simple $\La$-supermodules} 
Fix the maximal torus $\fh \subseteq \fg_{0}$ consisting of diagional matrices and the Borel subalgebra $\b_0$ of $\La_0$ consisting of upper triangular matrices. Let 
$X_0^+ \subset \fh^{*}$ denote the parametrizing set of highest weights for the simple finite dimensional $\La_0$-modules with respect to the pair $(\fh , \b_0)$ and let $L_0(\la)$ denote the simple finite dimensional $\fg_{0}$-module with 
highest weight $\la\in X_{0}^{+}.$ We view $L_{0}(\lambda)$ as a $\fg_{0}$-supermodule concentrated in degree $\0$.  

The \emph{Kac supermodule} $K(\la )$ is the induced representation of $\fg$,
$$K(\lambda )=U(\La)\otimes _{U(\fg_{0}\oplus \fg^{+})}L_0(\la ),$$ 
where $L_0(\la )$ is viewed as a $\fg_{0}\oplus \fg^{+}$ via inflation through the canonical quotient map $\fg_{0}\oplus \fg^{+} \to \fg_{0}$. 
By the PBW theorem for Lie superalgebras the supermodule $K(\la )$ is a finite dimensional indecomposable object in ${\mathcal C}_{(\fg,\fg_{0})}.$  With respect to the choice of Borel subalgebra $\b_0\oplus \fg^{+} \subseteq \fg$ one has a dominance order on weights.  With respect to this ordering $K(\lambda)$ has highest weight $\la$ and, therefore, a unique simple quotient which we denote by $L(\la )$.
Conversely, every finite dimensional simple supermodule appears as the head of some Kac supermodule (cf.\ \cite[Theorem 3.1]{Ser}). 

From our discussion above one observes that the set
$$\{L(\la)\mid \la \in X_0^+\}$$ is a complete irredundant collection of simple finite dimensional $\La$-supermodules.

\subsection{Root Decomposition}

Recall from the previous section that we fixed a maximal torus $\fh \subseteq \fg_{0} \subseteq \fg.$  With respect to this choice we have a root decomposition
$$\La=\Lsa\oplus \bigoplus_{\alpha \in\Phi }\La_{\alpha}.$$ 
Many properties of root decompositions for semisimple Lie algebras do not hold in our case. For example, a root can have multiplicity
bigger than one, and $\a \in \Phi$ does not imply that $-\a\in \Phi$. Still any root space $\La_\a$
is concentrated in either degree $\0$ or degree $\1$ and in this way one can define a natural parity function on roots.

Let us describe the roots. We choose the standard basis $\varepsilon _1,\dotsc , \varepsilon _n$ of $\Lsa^\ast$ where $\varepsilon_{i}(\xi_{j}\partial_{j})=\delta_{i,j}$ for all $1 \leq i,j \leq n.$  Then the root system of $\fg$ is the set 
$$\Phi=\{\varepsilon _{i_1}+ \dotsb  +\varepsilon _{i_k}-\varepsilon _j\mid 1\leq i_1<\dotsb <i_k\leq n,\ 1\leq j\leq n\}.$$
The  set of simple roots for $\La$ is
$$\triangle =\{\varepsilon _1-\varepsilon _2, \dotsc , \varepsilon _{n-1}-\varepsilon _n\}.$$

\subsection{Typical and Atypical Weights} We consider Borel subalgebras $\b$ of $\La$ containing $\b_0$. Among such subalgebras 
we distinguish $\b_{\max}=\b_0\oplus \fg^{+}$ and
$\b_{\min}=\b_0\oplus \fg^{-}$. Let $\b$ denote either $\b_{\max}$ or $\b_{\min}$. Then $\lambda \in \fh^{*}$ defines a 
one dimensional representation of $\b$ which we denote by $\C_ {\lambda}$. The induced supermodule $M^{\b}(\la)=U(\La)\otimes _{U(\b)}\C_{\la}$ 
has a unique proper maximal submodule. We denote the unique irreducible quotient by $L^{\b}(\la)$. In particular, if $\lambda \in X_{0}^{+}$, then  $L(\la)\cong L^{\b_{\max}}(\la).$ 

Denote by $\la'$ the weight such that $L^{\b_{\min}}(\la')\cong L(\la)$.
Let $\Phi(\La_{-1})$ be the set of roots which lie in $\La_{-1}$. By Serganova \cite[Lemma 5.1]{Ser}, 
\begin{equation}\label{eq:typ}\la'=\la+\sum_{\alpha \in \Phi(\La_{-1})}\a
\end{equation}
for a Zariski open set of $\la \in \Lsa^\ast$. Here $\Phi(\fg_{-1})$ denotes the set of weights of $\fg_{-1}.$  Following Serganova, we call 
$\la \in \Lsa^\ast$ \emph{typical} if \eqref{eq:typ} holds for $\la$ and otherwise 
$\la$ is \emph{atypical}.  Serganova determines a necessary and sufficient combinatorial condition for $\lambda$ to be typical. Namely, by \cite[Lemma 5.3]{Ser} one has that the set of atypical weights for $\La$ is
$$\Omega =\{a\varepsilon_i+\varepsilon_{i+1}+ \dotsb  +\varepsilon_n \in \fh^{*}\mid a\in \C,\ 1\leq i\leq n\}.$$  In particular, one has
$$\Omega\cap X_{0}^{+}=\{a\varepsilon_{1}+\dots +\varepsilon _n \mid a = 1, 2, 3,\dotsc \}\cup 
\{b\varepsilon _n \mid\ b=0,-1,\dots \}.$$
\section{Cohomology in ${\mathcal C}_{(\La,\La_0)}$} \label{S:CohominC}

\subsection{} The goal of this section is to compute the cohomology ring $R=\HH^{\bullet}(\fg,\fg_0;\C ).$  The main result is Theorem~\ref{cohomologyring} which shows that $R$ can be identified with a ring of invariants.  Consequently one sees that $R$ is finitely generated and $\HH^{\bullet}(\fg ,\fg_{0};M)$ is a finitely generated $R$-module for any finite dimensional $\fg$-supermodule $M.$

\subsection{} We begin by showing that the calculation of $\fg_{0}$-invariants on  
$\Lambda_{s}^{\bullet}((\fg/\fg_{0})^{*})$ reduces to looking at $\fg_{0}$-invariants on $\Lambda^{\bullet}_{s}\left( \fg_{-1}^{*}\oplus \fg^{*}_{1}\right)$. This will be accomplished by 
using information from the representation theory of $\fg_{0}\cong \gl$. 

\begin{thm}\label{cutinvariants} Let $\fg =W(n)$ and let $p \geq 0.$ Then, $\Lambda_{s}^{p}((\La/\La_0)^\ast)^{\La_0}\cong \Lambda_{s}^{p}(\La_{-1}^\ast \oplus \La_1^\ast )^{\La_0}$
\end{thm}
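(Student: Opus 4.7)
The plan is to decompose $\Lambda_s^p((\fg/\fg_0)^*)$ as a direct sum of $\fg_0$-submodules and show that the $\fg_0$-invariants are concentrated in one specific subfamily of summands. Writing $\fg/\fg_0 = \fg_{-1}\oplus \fg_1\oplus\cdots\oplus \fg_{n-1}$ as a decomposition of $\fg_0$-modules, I would expand
\[
\Lambda_s^p((\fg/\fg_0)^*) = \bigoplus_{\mathbf{a}} \Lambda_s^{a_{-1}}(\fg_{-1}^*)\otimes \Lambda_s^{a_1}(\fg_1^*)\otimes\cdots\otimes \Lambda_s^{a_{n-1}}(\fg_{n-1}^*),
\]
where $\mathbf{a}=(a_{-1},a_1,\dotsc,a_{n-1})$ runs over tuples of nonnegative integers with $\sum_k a_k=p$. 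This is a decomposition as $\fg_0$-submodules, and $\Lambda_s^p(\fg_{-1}^*\oplus\fg_1^*)$ embeds naturally as the subsum over those $\mathbf{a}$ with $a_k=0$ for all $k\geq 2$. The theorem then reduces to showing that every summand with $a_k>0$ for some $k\geq 2$ carries no $\fg_0$-invariants.

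As preparation I would observe two things. First, the central element $z=\sum_i \xi_i\partial_i$ of $\fg_0\cong \gl$ acts on $\fg_k$ by the scalar $k$ (which follows from $[z,\xi_I\partial_j]=(|I|-1)\xi_I\partial_j$), hence on $\fg_k^*$ by $-k$. Consequently $\fg_0$-invariance in the $\mathbf{a}$-summand forces $\sum_k k\,a_k=0$, i.e.\ $a_{-1}=\sum_{k\geq 1}k\,a_k$, and if $a_k>0$ for some $k\geq 2$ this yields the strict inequality $a_{-1}>\sum_{k\geq 1}a_k$. Second, using $\fg_k^*\cong \Lambda^{k+1}(V^*)\otimes V$, the First Fundamental Theorem of invariant theory for $GL(n)$ shows that the $\fg_0$-invariants in the un-super-symmetrized tensor product $\bigotimes_{i=1}^p \fg_{k_i}^*$ (nonzero only when $\sum k_i=0$) are spanned by determinantal invariants $C_P$, one for each way of partitioning $\{1,\dotsc,p\}$ into subsets $(S_j)_{j=1}^p$ with $|S_j|=k_j+1$, where $S_j$ is associated with the $j$-th factor and $C_P=\prod_j \det_j$, with $\det_j$ the $(k_j+1)\times(k_j+1)$ determinant of pairings between the antisymmetrized $V^*$-slots of the $j$-th factor and the $V$-slots indexed by $S_j$.

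The heart of the argument is a pigeonhole step. For an arbitrary such partition $P$, the $a_{-1}$ $V$-slots coming from the $\fg_{-1}^*$-factors (which themselves have $|S_j|=0$) must be contracted with $V^*$-slots, and those appear only in the $\sum_{l\geq 1}a_l$ factors with $l\geq 1$. The strict inequality $a_{-1}>\sum_{l\geq 1}a_l$ forces some block $S_j$ to contain at least two such $\fg_{-1}^*$-indices, say $v_i$ and $v_{i'}$. Because $\fg_{-1}^*$ is odd, the super-symmetrization on $\Lambda_s^{a_{-1}}(\fg_{-1}^*)=S^{a_{-1}}(\fg_{-1}^*)$ is projection onto the trivial representation of $S_{a_{-1}}$; the transposition $\tau=(i,i')$ lies in the stabilizer of $P$ inside $S_{a_{-1}}$ and acts on $C_P$ by $-1$, since it swaps two columns of $\det_j$. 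By Frobenius reciprocity the entire $S_{a_{-1}}$-orbit of $C_P$ then contributes zero to the trivial isotypic component, and as this applies to every $P$, the summand has no super-symmetric $\fg_0$-invariants.

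The main obstacle I anticipate is the careful bookkeeping in the FFT step: verifying that the determinantal $C_P$ indeed span all $\fg_0$-invariants in $\bigotimes_i \fg_{k_i}^*$, and confirming that the remaining super-symmetrizations over blocks with $k\geq 1$ (symmetric for $k$ odd, antisymmetric for $k$ even) commute with the $S_{a_{-1}}$-projection and therefore do not interfere with the vanishing conclusion. These checks are essentially routine once the sign conventions are fixed, but they are the places where the computation requires the most care.
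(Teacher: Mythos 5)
Your argument is correct but takes a genuinely different route from the paper. You expand $\Lambda_s^p((\fg/\fg_0)^*)$ over multi-degree types and then kill the unwanted summands using three ingredients: (i) the central element $z=\sum\xi_i\partial_i$ acts on $\fg_k^*$ by $-k$, forcing $a_{-1}=\sum_{k\geq 1} k\,a_k$ and hence the strict pigeonhole inequality $a_{-1}>\sum_{k\geq 1}a_k$ once some $a_k>0$ with $k\geq 2$; (ii) the First Fundamental Theorem for $GL(n)$ reduces the invariants in $\bigotimes_i \fg_{k_i}^*$ to determinantal contractions $C_P$; (iii) since two $\fg_{-1}^*$-indices must land in the same determinantal block, the transposition swapping them flips the sign of $C_P$, so the $S_{a_{-1}}$-symmetrization (which is what $\Lambda_s^{a_{-1}}=S^{a_{-1}}$ imposes on the odd factor) annihilates every $C_P$. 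The paper instead interprets the invariants as $\Hom_{\fg_0}\bigl(\Lambda_s^{i_{-1}}(\fg_{-1}),\bigotimes_{k\geq 1}\Lambda_s^{i_k}(\fg_k^*)\bigr)$, notes that $\Lambda_s^{i_{-1}}(\fg_{-1})\cong S^{i_{-1}}(V^*)$ is simple with known highest weight $\mu$, bounds above the weights of the tensor-product target by a sum $\lambda$ of highest weights of the $\fg_k^*$, and then compares $\sum_{i\leq n-1}\mu_i\leq\sum_{i\leq n-1}\lambda_i$ in the dominance order to force $i_2=\cdots=i_{n-1}=0$. Your FFT route is more combinatorial and self-contained (and makes the vanishing of individual invariants transparent), while the paper's dominance-order argument is shorter and avoids having to enumerate the full space of contractions. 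One small quibble: the invocation of ``Frobenius reciprocity'' in your symmetrization step is a misnomer --- what you are really using is that a transposition acting by $-1$ forces the trivial-isotypic projection of $C_P$ to vanish, which is a direct averaging/coset argument rather than an adjunction. The claimed commutation of the $S_{a_{-1}}$-projection with the (anti)symmetrizations on the $\fg_l^*$-factors for $l\geq 1$ does hold, since these act on disjoint sets of tensor slots, so your anticipated obstacle is indeed routine.
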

\begin{proof} First observe that $\La/\La_0\cong \La_{-1} \oplus \La_1 \oplus \La_2 \oplus \dotsb  \oplus \La_{n-1}$ as
$\La_0$-modules.  We then have

\begin{align*}\Lambda _s^{p}(\La_{-1}^\ast \oplus \La_1^\ast \oplus & \La_2^\ast \oplus \dotsb 
\oplus \La_{n-1}^\ast )^{\La_0} \\
& \cong  \bigoplus\left(\Lambda _s^{i_{-1}}(\La_{-1}^\ast) \otimes 
\Lambda _s^{i_1}(\La_1^\ast)\otimes\Lambda_s^{i_2} (\La_2^\ast)\otimes \dotsb  \otimes \Lambda _s^{i_{n-1}}(\La_{n-1}^\ast) \right)^{\La_0} \\
& \cong  \bigoplus  \Hom_{\La_0} \left( \C,
\Lambda _s^{i_{-1}}(\La_{-1}^\ast) \otimes 
\Lambda _s^{i_1}(\La_1^\ast)\otimes\Lambda_s^{i_2} (\La_2^\ast)\otimes \dotsb  \otimes \Lambda _s^{i_{n-1}}(\La_{n-1}^\ast)\right) \\
& \cong  \bigoplus  \Hom_{\La_0}\left(\Lambda _s^{i_{-1}}(\La_{-1}),\Lambda _s^{i_1}(\La_1^\ast)\otimes \Lambda_s^{i_2} (\La_2^\ast)\otimes \dotsb  \otimes 
\Lambda _s^{i_{n-1}}(\La_{n-1}^\ast) \right)
\end{align*}
where the direct sums are taken over all nonnegative integers $i_{-1}, i_1, \dotsc , i_{n-1}$ such that $i_{-1}+i_1+\dotsb +i_{n-1}=p$.

Recall that $\La_{-1}$ is isomorphic to the dual of the natural $\fg_0$-module and, since $\fg_{-1}$ is concentrated in degree $\1 ,$  one has $\Lambda _s^{i_{-1}}(\La_{-1})\cong S^{i_{-1}}(\La_{-1})$ as $\fg_{0}$-modules.  It is 
well known that symmetric powers of the dual of the natural module are simple (cf. \cite[II 2.16]{Jan}) 
and so $\Lambda _s^{i_{-1}}(\La_{-1})$ is a simple $\La_0$-module with highest weight 
\[
\mu =(\mu_1, \dotsc , \mu_n)=(0, \dotsc , 0,-i_{-1}).
\]

Since $\La_k^\ast\cong \Lambda ^{k+1}(V^\ast)\otimes V \ \  (-1\leq k\leq n-1)$, where $V$ is the 
natural $\La_0$-module, and since $\Lambda^{k+1}(V^\ast)$ (resp. $V$) are simple 
$\La_0$-modules with highest weights $(0,\dotsc , 0,-1, \dotsc , -1)$ (resp. $(1,0,\dotsc , 0)$), the highest weight 
of $\La_k^\ast$ will be the sum of these two weights,
$(1,0,\dotsc ,0, -1, \dotsc , -1)$. Therefore the largest possible weight that could occur in the weight space decomposition of the $\La_0$-module
\[
A:=(\La_1^\ast)^{\otimes i_{-1}}\otimes (\La_2^\ast)^{\otimes i_{2}}\otimes \dotsb  \otimes (\La_{n-1}^\ast)^{\otimes i_{n-1}}
\]
is \begin{eqnarray*}
\lambda & = & (\lambda_1, \dotsc , \lambda_n)\\
& = & (i_1, 0, \dotsc ,0, -i_1,-i_1)+(i_2,0, \dotsc , 0, -i_2, -i_2, -i_2)+\dotsb +(0,-i_{n-1},\dotsc ., -i_{n-1})\\
& = &(\Sigma _{t=-1}^{n-1}i_t, -i_{n-1}, -i_{n-2}-i_{n-1}, \dotsc ,-\Sigma _{t=-2}^{n-1}i_t, -\Sigma _{t=-1}^{n-1}i_t, -\Sigma _{t=-1}^{n-1}i_t).\end{eqnarray*}

If the $\Hom$-space 
\begin{equation}\label{eq:head}
\Hom_{\La_0}(\Lambda _s^{i_{-1}}(\La_{-1}),\Lambda _s^{i_1}(\La_1^\ast)\otimes\Lambda _s^{i_2}(\La_2^\ast)\otimes \dotsb  \otimes 
\Lambda _s^{i_{n-1}}(\La_{n-1}^\ast))
\end{equation}
 is non-zero, then the weight $\mu$ occurs in the weight space decomposition of the $\La_0$-module $A$.
However, if $\mu$ occurs in the weight space decomposition of this $\La_0$-module, then it has to be less than or equal to $\lambda$ in the dominance order;  that is, $\Sigma _{i=1}^k\mu_i\leq \Sigma _{i=1}^k\lambda_i$ for all $k>0$. By considering this inequality when $k=n-1$ one obtains 
$$0\leq -i_2-2i_3-\dotsb -(n-2)i_{n-1}.$$Thus the  $\Hom$ space in \eqref{eq:head} is nonzero only when
$i_2=i_3=\dotsb =i_{n-1}=0$. This gives the stated result.
\end{proof}

\subsection{Calculation of $\HH^{\bullet}(\fg,\fg_{0};\C )$} The previous theorem can be used to show that the cohomology ring $\HH^\bullet (\La,\La_0;\C)$  
can be identified with a ring of invariants.  Recall that $G_{0} \cong \text{GL}(n)$ denotes the connected reductive group such that $\Lie (G_{0}) =\fg_{0}$ and the adjoint action of $G_{0}$ on $\fg$ differentiates to the adjoint action of $\fg_{0}$ on $\fg.$
 
\begin{thm}\label{cohomologyring} Let $\fg =W(n).$ Then, 
\[
\HH^\bullet (\La,\La_0;\C)\cong S((\La_{-1} \oplus \La_1)^\ast )^{\La_0}=S((\La_{-1} \oplus \La_1)^\ast )^{G_0}.
\]
\end{thm}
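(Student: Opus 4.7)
The plan is to show that the Chevalley--Eilenberg differential on relative cochains is identically zero, so that the cohomology coincides with the invariant cochains, and then to read off the answer from Theorem~\ref{cutinvariants}.

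First, by definition $C^p(\La, \La_0; \C) = \Hom_{\La_0}(\Lambda_s^p(\La/\La_0), \C) \cong \Lambda_s^p((\La/\La_0)^\ast)^{\La_0}$, and by Theorem~\ref{cutinvariants} this is $\Lambda_s^p(\La_{-1}^\ast \oplus \La_1^\ast)^{\La_0}$. Since $\La_{-1}, \La_1 \subseteq \La_{\bar{1}}$, the super wedge on $\La_{-1}^\ast \oplus \La_1^\ast$ collapses to the ordinary symmetric product, so $C^\bullet(\La, \La_0; \C) \cong S((\La_{-1} \oplus \La_1)^\ast)^{\La_0}$ as graded algebras, the multiplicative structure being transported along the natural inclusion $\Lambda_s^\bullet(\La_{-1}^\ast \oplus \La_1^\ast) \hookrightarrow \Lambda_s^\bullet((\La/\La_0)^\ast)$.

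Next I would verify that $d^p \equiv 0$ on these relative cochains. Since $\La$ acts trivially on $\C$, the Lie-derivative term in \eqref{eq:cohom} drops out and only the bracket sum contributes. For $\phi \in C^p(\La, \La_0; \C)$, the image $d^p(\phi)$ again lies in $C^{p+1}(\La, \La_0; \C)$, hence by Theorem~\ref{cutinvariants} is determined by its values on tuples $x_1, \dotsc, x_{p+1} \in \La_{-1} \oplus \La_1$. For such a tuple, $[x_i, x_j] \in \La_{-2} \oplus \La_0 \oplus \La_2 = \La_0 \oplus \La_2$ (as $\La_{-2} = 0$), so modulo $\La_0$ only the $\La_2$ component survives. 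But $\phi$, being a relative cochain, also lies in $\Lambda_s^p(\La_{-1}^\ast \oplus \La_1^\ast)^{\La_0}$ by the same theorem, and therefore vanishes on any tuple one of whose arguments has a component in $\La_k$ with $k \geq 2$. Each summand of $d^p(\phi)(x_1, \dotsc, x_{p+1})$ is thus zero, so $d^p(\phi) = 0$.

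With every differential zero, $\HH^\bullet(\La, \La_0; \C) = C^\bullet(\La, \La_0; \C) = S((\La_{-1} \oplus \La_1)^\ast)^{\La_0}$, and the equality with $G_0$-invariants is immediate from the connectedness of $G_0$ together with $\Lie(G_0) = \La_0$. The only real obstacle will be the vanishing of the differential: this depends essentially on the interplay between the $\Z$-grading of $W(n)$, so that brackets out of $\La_{\pm 1}$ never return to $\La_{\pm 1}$ modulo $\La_0$, and the sharp support restriction supplied by Theorem~\ref{cutinvariants}.
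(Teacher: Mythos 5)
Your proposal is correct and follows the same overall strategy as the paper: identify the relative cochains with $\Lambda_s^\bullet(\La_{-1}^\ast\oplus\La_1^\ast)^{\La_0}$ via Theorem~\ref{cutinvariants}, show the differential vanishes, and conclude that cohomology equals cochains (which is the symmetric algebra on invariants since $\La_{-1}\oplus\La_1$ sits in degree $\1$). There is one place where your version is actually \emph{more} careful than the paper's. The paper asserts that ``each $[x_i,x_j]$ is zero in the quotient $\La/\La_0$ since the bracket preserves the $\Z$-grading,'' which is not literally true: for $x_i,x_j\in\La_1$ one gets $[x_i,x_j]\in\La_2$, and $\La_2$ is emphatically not zero in $\La/\La_0$. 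You correctly separate the two cases---the bracket lands either in $\La_0$ (zero in the quotient) or in $\La_2$ (nonzero in the quotient, but outside the support of $\phi$ as supplied by Theorem~\ref{cutinvariants})---and you also explicitly note that evaluating $d^p\phi$ on tuples outside $\Lambda_s^{p+1}(\La_{-1}\oplus\La_1)$ is unnecessary because $d^p\phi\in C^{p+1}$ is subject to the same support constraint. These are exactly the details the paper glosses over, and your supplying them makes the vanishing argument airtight. The remaining points (super-wedge on a purely odd space collapses to the symmetric algebra; $\La_0$-invariants equal $G_0$-invariants because $G_0$ is connected with $\Lie(G_0)=\La_0$) match the paper.
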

\begin{proof}  By Theorem ~\ref{cutinvariants} one has
\begin{align*}C^p(\La, \La_0; \C)& =  \Hom_{\La_0}(\Lambda_s^p(\La /\La_0), \C) \\
& \cong \Lambda_{s}^{p} \left((\fg / \fg_{0})^{*} \right)^{\fg_{0}}\\
& \cong\Lambda_{s}^{p} \left(\fg_{-1}^{*} \oplus \fg_{1}^{*} \right)^{\fg_{0}}\\
& \cong \Hom_{\La_0}(\Lambda_s^p(\La_{-1} \oplus \La_1) ,\C). 
\end{align*}
Now observe that in this case the differential $d^p$ in \eqref{eq:cohom} is identically zero. Namely, in the first sum of \eqref{eq:cohom} each $[x_i, x_j]$ is zero in the quotient $\La/ \La_0$ since the bracket preserves the $\Z$-grading and the terms in the second sum of \eqref{eq:cohom} are zero since here $M$ is the trivial supermodule. 

As a consequence the cohomology can be identified with the cochains. It remains to observe that since $\fg_{-1}\oplus\fg_{1}$ is concentrated in degree $\1,$ one has 
\[
C^{p}(\fg,\fg_0; \C ) \cong \Lambda_s^p\left( (\La_{-1} \oplus \La_1)^{*}\right)^{\fg_{0}} \cong S^{p}\left( (\La_{-1} \oplus \La_1)^{*}\right)^{\fg_{0}}=S^{p}\left( (\La_{-1} \oplus \La_1)^{*}\right)^{G_{0}}.
\]  
\end{proof}

\subsection{Finite Generation Results} Let $M$ be a finite dimensional $\La$-supermodule. By using the Yoneda product $\HH^\bullet(\La, \La_0; M)$ is a module for the cohomology ring $R.$  A result from invariant theory shows that this module is finitely generated over $R$. 

\begin{thm}\label{finitegeneration}
Let $M$ be a finite dimensional $\La$-supermodule. Then,
\begin{itemize} 
\item[(a)] The superalgebra $\HH^\bullet (\La,\La_0;\C)$ is a finitely generated commutative ring;
\item[(b)] The cohomology $\HH^\bullet (\La,\La_0;M)$ is finitely generated as an $\HH^\bullet (\La,\La_0;\C)$-module.
\end{itemize} 
\end{thm}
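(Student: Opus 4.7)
For part (a), the approach is to combine Theorem~\ref{cohomologyring} with classical invariant theory. Theorem~\ref{cohomologyring} identifies $R$ with $S((\La_{-1}\oplus \La_1)^{*})^{G_0}$, and since $G_0\cong \textrm{GL}(n)$ is linearly reductive and acts rationally on the finite-dimensional vector space $\La_{-1}\oplus \La_1$, Hilbert's finiteness theorem yields that this invariant ring is a finitely generated commutative $\C$-algebra; commutativity is automatic since $R$ sits inside the commutative polynomial ring $S((\La_{-1}\oplus \La_1)^{*})$.

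For part (b) the plan is to realize the full cochain complex as a finitely generated $R$-module with $R$-linear differential, and then close with a Noetherian subquotient argument. First I would use the identification
\[
C^{\bullet}(\La,\La_{0};M) \cong \left[\Lambda_{s}^{\bullet}((\La/\La_{0})^{*}) \otimes M\right]^{G_{0}},
\]
and set $A := \Lambda_{s}^{\bullet}((\La/\La_{0})^{*})$. This $A$ is a finitely generated (super)commutative $\C$-algebra with rational $G_{0}$-action, and $A \otimes M$ is a finitely generated $A$-module (free of $\C$-rank equal to $\dim M$) with a compatible rational $G_{0}$-action. The standard extension of Hilbert's theorem to modules for reductive groups then implies that $(A\otimes M)^{G_{0}}$ is a finitely generated module over $A^{G_{0}}$. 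Since the proof of Theorem~\ref{cohomologyring} shows the differential on $C^{\bullet}(\La,\La_{0};\C)$ is identically zero, $A^{G_{0}}$ is exactly $R$, and hence $C^{\bullet}(\La,\La_{0};M)$ is a finitely generated $R$-module. To pass from cochains to cohomology, I would invoke the super-Leibniz rule for the cup product: if $\alpha$ is a cocycle representing an element of $R$ then $d(\alpha) = 0$ at the cochain level, so $d(\alpha \cdot \phi) = \pm \alpha \cdot d(\phi)$ for every $\phi \in C^{\bullet}(\La,\La_{0};M)$. Consequently $\Ker d$ and $\textrm{Im}\, d$ are $R$-submodules of $C^{\bullet}(\La,\La_{0};M)$, and Noetherianity of $R$ (from part (a)) then gives finite generation of $\Ker d$, and hence of the quotient $\HH^{\bullet}(\La,\La_{0};M) = \Ker d / \textrm{Im}\, d$.

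The main point requiring care is the transfer-of-finite-generation argument from invariant theory in the supersetting, as $A$ is $\Z_{2}$-graded-commutative rather than strictly commutative. I would circumvent this either by decomposing $A \cong \Lambda^{\bullet}((\La/\La_{0})_{\0}^{*}) \otimes S^{\bullet}((\La/\La_{0})_{\1}^{*})$ as an honest finitely generated commutative $\C$-algebra with rational $G_{0}$-action, so that the classical reductive invariant theorem applies directly, or by appealing to the analogous statement for supergroup actions. A secondary technical point is the sign bookkeeping in the super-Leibniz rule, but only its existence (not the precise sign) is needed to deduce $R$-invariance of $\Ker d$ and $\textrm{Im}\, d$.
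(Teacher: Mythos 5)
Your overall strategy matches the paper's: part (a) is the same argument, and for part (b) you correctly identify the three-step plan of (i) realizing the cochain complex as $G_{0}$-invariants in a finitely generated module, (ii) transferring finite generation through invariants via reductivity of $G_{0}$, and (iii) using a super-Leibniz argument to pass from cochains to cohomology (this last step is what the paper delegates to \cite[Theorem 2.5.3]{BKN1}). You also correctly flag the real technical crux, namely that $A=\Lambda_{s}^{\bullet}((\fg/\fg_{0})^{*})$ is only super-commutative, so the commutative-$G$-algebra hypothesis in the standard invariant-theoretic transfer theorem does not apply to $A$ as written.

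However, your proposed fix does not quite close the gap. Writing $A\cong \Lambda^{\bullet}((\fg_{\0}/\fg_{0})^{*})\otimes S^{\bullet}(\fg_{\1}^{*})$ does not make $A$ ``an honest finitely generated commutative $\C$-algebra'': the exterior factor $\Lambda^{\bullet}((\fg_{\0}/\fg_{0})^{*})$ is still anticommutative, so the tensor product remains non-commutative and the statement you want to cite (e.g.\ \cite[Theorem 3.25]{PV}) still does not apply to $A$ itself. The paper's resolution is slightly different and is the move you should make: do not view $A\otimes M$ as an $A$-module at all; instead restrict to the genuinely commutative polynomial $G_{0}$-subalgebra $S(\fg_{\1}^{*})\subseteq A$, observe that $A\otimes M\cong S(\fg_{\1}^{*})\otimes\Lambda^{\bullet}((\fg_{\0}/\fg_{0})^{*})\otimes M$ is a finitely generated $S(\fg_{\1}^{*})$-module (since the remaining tensor factor is finite dimensional), and then apply \cite[Theorem 3.25]{PV} to the commutative $G_{0}$-algebra $S(\fg_{\1}^{*})$ to conclude that $C^{\bullet}(\fg,\fg_{0};M)$ is finitely generated over $S(\fg_{\1}^{*})^{G_{0}}$, which is identified with $R$ via Theorem~\ref{cutinvariants} and Theorem~\ref{cohomologyring}. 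With this substitution your argument becomes the paper's proof.
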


\begin{proof} Since $G_{0}\cong \text{GL}(n)$ is reductive, (a) follows from Theorem~\ref{cohomologyring} and the classical invariant theory 
result of Hilbert \cite[Theorem 3.6]{PV}. 

(b) Observe that 
\begin{align*}
\Hom_{\C}(\Lambda_s^{\bullet}(\fg/\fg_{0}),M)&\cong \Lambda_s^{\bullet} ((\fg/\fg_{0})^{*})\otimes M \\
       & \cong \Lambda_s^{\bullet} (\fg_{\1}^{*}) \otimes \Lambda_s^{\bullet} ((\fg_{\0}/\fg_{0})^{*})\otimes M \\
        &  = S (\fg_{\1}^{*}) \otimes \Lambda_s^{\bullet} ((\fg_{\0}/\fg_{0})^{*})\otimes M 
\end{align*}
is finitely generated as a $S(\fg_{\1}^{*})$-module (under left multiplication) since $\Lambda_s^{\bullet} ((\fg_{\0}/\fg_{0})^{*})\otimes M$ is finite dimensional. Since $S(\fg_{\1}^{*})$ is a finitely generated commutative $G_{0}$-algebra, one can invoke \cite[Theorem 3.25]{PV} to see that 
\[
\Hom_\C(\Lambda_s^{\bullet}(\fg/\fg_{0}),M)^{G_{0}}=\Hom_{G_0}(\Lambda_s^{\bullet}(\fg/\fg_{0}),M) = C^{\bullet}(\fg ,\fg_{0};M)
\] is finitely generated as a $S(\fg_{\1}^{*})^{G_{0}}=\Lambda_{s}^{\bullet}(\fg_{\1}^{*})^{G_{0}}\cong \HH^\bullet(\La,\La_0;\C)$-module. One can now argue as in the proof of \cite[Theorem 2.5.3]{BKN1} to infer that $\HH^{\bullet}(\fg ,\fg_{0};M)$ is a finitely generated $ \HH^\bullet(\La,\La_0;\C)$-module.
\end{proof}

\section{Invariant Theory Calculations}\label{S:invarianttheory}

\subsection{} Recall from Theorem~\ref{cohomologyring} that
\begin{equation}\label{E:cohomiso}
R=\HH^{\bullet}(\La ,\La_{0};\C)\cong S\left((\fg_{-1}\oplus \fg_{1})^{*} \right)^{\La_{0}}.
\end{equation} Thus to compute $R$ it suffices to compute the invariant ring on 
the right hand side of \eqref{E:cohomiso}.  To do so we use a result of Luna and Richardson \cite{LR}.  First we require certain preliminaries.


\subsection{} If $G$ is an algebraic group which acts on a variety $X,$ then we write $g.x$ for the action of $g \in G$ on the element $x \in X.$  Set 
\[
\Stab_{G}(x) = \{g \in G \mid g.x=x \},
\] the \emph{stabilizer} of $x.$  An element $x \in X$ is \emph{semisimple} if the orbit $G.x$ is closed in $X$.  An element $x \in X$ is said to be \emph{regular} if the dimension of the orbit $G.x$ is of maximal possible dimension among all orbits.  Equivalently, $x$ is regular if the dimension of $\Stab_{G}(x)$ is of minimal dimension among all stabilizer subgroups.

The group $G_{0}\cong GL(n)$ acts on $\La$ by the adjoint 
action and its action preserves the $\Z$-grading of $\La.$  
Let 
\[
\beta: \La_{-1} \oplus \La_{1} \to \La_{0}
\]
be the $G_{0}$-equivariant map given by $\beta(x+y)=[x,y]$ for all $x \in \La_{-1}$ and $y \in \La_{1}.$  

Fix $T \subseteq G_{0}$ to be the maximal torus consisting of all diagonal matrices.  Then $\Lsa = \operatorname{Lie}\left(T \right),$ the Cartan subalgebra we fixed in Section~\ref{S:OnWn}.  

The following lemma summarizes some well known results about the adjoint action of $G_{0}$ on $\mathfrak{g}_{0}.$  See, for example, \cite{CM,Hum}.

\begin{lem}\label{L:basiclemma} Let $h \in \fh.$ Then,
\begin{itemize}
\item[(a)]  The element $h$ is regular if and only if $\Stab_{G_{0}}(h)=T;$
\item[(b)] The element $h$ is regular if and only if all the eigenvalues of $h$ are pairwise distinct elements of $\C;$
\item[(c)] An element $x \in \La_{0}$ is semisimple if and only if it is $G_{0}$-conjugate to an element of $\Lsa;$
\item[(d)] An element of $x \in \La_{0}$ is semisimple and regular if and only if it is $G_{0}$-conjugate to a regular element of $\Lsa;$
\item [(e)]  The semisimple regular elements of $\fg_{0}$ form a dense open set in $\fg_{0}$.
\end{itemize}
\end{lem}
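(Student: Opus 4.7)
The statement is a collection of standard facts about the adjoint action of $GL(n)$ on its Lie algebra, so my plan is to assemble the classical arguments in an order that lets (a)--(e) fall out together. I would organize things by first analysing stabilizers of elements of $\fh$, then characterizing semisimple elements of $\fg_{0}$, and finally combining the two for the regularity statements.

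First, for $h=\operatorname{diag}(a_{1},\dotsc,a_{n})\in \fh$ I would directly solve $ghg^{-1}=h$ in $GL(n)$: writing this as $g_{ij}(a_{i}-a_{j})=0$ shows that $\Stab_{G_{0}}(h)$ is the subgroup of invertible block-diagonal matrices whose blocks correspond to the level sets of the function $i\mapsto a_{i}$. Its dimension is $\sum_{c}m_{c}^{2}$, where $m_{c}$ is the multiplicity of the eigenvalue $c$, and this is minimized (equal to $n$) exactly when all the $a_{i}$ are pairwise distinct, in which case the stabilizer reduces to $T$. To upgrade this to statement (b) I also need that the maximal orbit dimension of $G_{0}$ acting on all of $\fg_{0}$ is $n^{2}-n$; this follows because the conjugacy class of a diagonal matrix with distinct eigenvalues already has dimension $n^{2}-n$, while every element of $\fg_{0}$ has centralizer containing its own (at least one-dimensional) scalar line through $I$, and a standard centralizer-dimension argument (e.g.\ using the Jordan decomposition or the fact that $\dim Z_{\fg_{0}}(x)\geq n$ for every $x\in\fg_{0}$) gives the reverse inequality. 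Together with the previous paragraph this yields both (a) and (b).

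For (c) I would use the standard fact that $x\in \fg_{0}=\mathfrak{gl}(n)$ has closed $G_{0}$-orbit precisely when $x$ is diagonalizable. One direction is a one-parameter degeneration: if $x=s+n$ is the Jordan decomposition with $n\neq 0$, conjugating by $g_{t}=\operatorname{diag}(t,t^{2},\dotsc,t^{n})$ lets $n$ shrink to zero in the limit $t\to 0$, producing $s$ in $\overline{G_{0}.x}\setminus G_{0}.x$. Conversely, for a diagonalizable $x$ the orbit $G_{0}.x$ is a fiber of the characteristic polynomial map minus the finitely many conjugacy classes with nontrivial Jordan blocks and the same characteristic polynomial, hence closed. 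Since every diagonalizable matrix over $\C$ is conjugate to a diagonal one, (c) follows. Part (d) is then immediate by combining (b) and (c): $G_{0}$-conjugation preserves both semisimplicity and the dimension of the stabilizer, hence preserves regularity, so a semisimple regular element may be moved into $\fh$ and there is regular by (b).

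Finally, for (e), I would observe that ``all eigenvalues distinct'' is the non-vanishing of the discriminant $\Delta\in\C[\fg_{0}]^{G_{0}}$ of the characteristic polynomial, which is a nonzero polynomial (it is nonzero on any diagonal matrix with distinct eigenvalues); hence $\{x\in\fg_{0}\mid\Delta(x)\neq 0\}$ is a dense Zariski open subset, and by (d) it coincides with the set of semisimple regular elements. There is no real obstacle here; the only thing requiring some care is the orbit-dimension bookkeeping in (a) and (b), namely confirming that $n^{2}-n$ really is the global maximum of orbit dimensions on $\fg_{0}$ rather than a maximum only among orbits meeting $\fh$, and this is handled as indicated above.
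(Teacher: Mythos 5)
The paper does not actually prove this lemma; it simply records it as ``well known'' and points the reader to references (the nilpotent-orbits book of Collingwood--McGovern and Humphreys' conjugacy classes book). So there is no paper proof to compare against in a strict sense; what you have done is supply the standard elementary argument in full, which is a legitimate and self-contained alternative to citing. Your organization (stabilizers in $\fh$ first, then the closed-orbit characterization of semisimplicity, then regularity on all of $\fg_{0}$, then genericity via the discriminant) is a clean way to derive (a)--(e).

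Two points in your write-up are imprecise and worth tightening, though neither is a fatal gap. First, in the one-parameter degeneration for (c): with $x$ in Jordan form (1's on the superdiagonal), conjugating by $g_{t}=\operatorname{diag}(t,t^{2},\dotsc,t^{n})$ scales the $(i,j)$ entry by $t^{i-j}$, and for the superdiagonal this is $t^{-1}$, which \emph{blows up} as $t\to 0$ rather than vanishing. You want decreasing exponents, e.g.\ $g_{t}=\operatorname{diag}(t^{n},t^{n-1},\dotsc,t)$, or equivalently send $t\to\infty$; also note you must first conjugate $x$ into Jordan form before applying this $g_{t}$. Second, your closedness argument for the orbit of a diagonalizable $x$ is not logically sound as phrased: removing the non-semisimple classes from the fiber of the characteristic polynomial map does not, on its own, produce a closed set (in fact the non-semisimple classes are the larger-dimensional, dense ones). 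The correct elementary reason is that $G_{0}.x$ is cut out by polynomial conditions: it is exactly the set of $y$ annihilated by the squarefree minimal polynomial of $x$ and having the same characteristic polynomial as $x$, hence closed. With these two repairs your proof is correct and does establish the lemma without appeal to the cited references.
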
We leave it to the reader to verify the following basic observations.

\begin{lem}\label{L:basiclemma2}  Let $G$ be an algebraic group acting on the varieties $X$ and $Y.$  Let $f: X \to Y$ be a $G$-equivariant map.  
Then the following statements hold true.
\begin{itemize}
\item [(a)] If $y \in Y$ and $x \in  f^{-1}(y),$ then 
\[ \Stab_{G}(x) \subseteq \Stab_{G}(y).
\]
\item [(b)] If $y \in Y,$ then 
\[
G.f^{-1}(y)=f^{-1}\left(G.y \right).
\]  In particular, if $x \in f^{-1}(y),$ then $G.x \subseteq f^{-1}(G.y).$
\end{itemize}
\end{lem}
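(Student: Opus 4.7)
The plan is to verify both parts directly from the definitions of stabilizer, orbit, preimage, and $G$-equivariance. Since the authors themselves say ``We leave it to the reader,'' the expected proof is routine and no real obstacle is anticipated; the main thing is to be careful to use only the $G$-equivariance hypothesis $f(g.x)=g.f(x)$ and set-theoretic manipulations of orbits and preimages.

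For part (a), I would start with an element $g \in \Stab_{G}(x)$, so that $g.x = x$. Applying $f$ to both sides and using $G$-equivariance gives $g.f(x) = f(g.x) = f(x)$, i.e., $g.y = y$ since $f(x)=y$. Hence $g \in \Stab_{G}(y)$, which yields the desired inclusion $\Stab_{G}(x) \subseteq \Stab_{G}(y)$.

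For part (b), I would prove the two inclusions separately. For $G.f^{-1}(y) \subseteq f^{-1}(G.y)$, take $z = g.x$ with $g \in G$ and $x \in f^{-1}(y)$; then $f(z) = g.f(x) = g.y \in G.y$, so $z \in f^{-1}(G.y)$. For the reverse inclusion $f^{-1}(G.y) \subseteq G.f^{-1}(y)$, take $z \in f^{-1}(G.y)$, so $f(z) = g.y$ for some $g \in G$; then $f(g^{-1}.z) = g^{-1}.f(z) = y$, so $g^{-1}.z \in f^{-1}(y)$ and hence $z = g.(g^{-1}.z) \in G.f^{-1}(y)$. The ``in particular'' clause is then immediate, since $\{x\} \subseteq f^{-1}(y)$ implies $G.x \subseteq G.f^{-1}(y) = f^{-1}(G.y)$.

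Since every step is a one-line manipulation, no step should present real difficulty; the only thing worth flagging is to invoke $G$-equivariance in the correct direction in each computation, particularly the use of $g^{-1}$ in the second inclusion of (b).
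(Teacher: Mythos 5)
Your argument is correct, and since the paper explicitly leaves this lemma to the reader, there is no paper proof to compare against; the straightforward unwinding of equivariance, stabilizers, orbits, and preimages you give is exactly the routine verification intended. Both inclusions in (b) and the inclusion in (a) are handled cleanly, including the necessary use of $g^{-1}$ in the reverse inclusion of (b).
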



\subsection{}\label{SS:semisimple}

We saw in Section~\ref{S:OnWn} that $\La_{-1}\cong V^{*}$ as a $\mathfrak{g}_{0}$-module and has 
basis $\partial_{i}$, $\La_{0}$ has basis $\xi_{i}\partial_{j},$ and $\La_{1}=\Lambda^{2}(V) \otimes V^{*}$ with basis $\xi_{i}\xi_{j}\partial_{k}$ (with $i < j$), where $1 \leq i,j,k\leq  n.$  Recall that the 
isomorphism $\La_{0} \cong \gl$ is given by $\xi_{i}\partial_{j} \leftrightarrow e_{i,j},$ where $e_{i,j}$ is the $(i,j)$ 
matrix unit.  In particular, $\Lsa$ is spanned by the set $\{ \xi_{i}\partial_{i} \mid 1 \leq i \leq n \}.$

\begin{lem}\label{L:semisimplereg}  Let $h \in \Lsa$ be a semisimple regular element and write 
\[
h = \sum_{i=1}^{n}c_{i}\xi_{i}\partial_{i},
\] with $c_{i}\in \C .$ One then has the following.
\begin{itemize}
\item[(a)] If $c_{1}, \dotsc , c_{n}$ are all nonzero, then $\beta^{-1}(h) = \emptyset.$
\item[(b)] If $c_{1}=0$ and $x \in \beta^{-1}(h),$ then 
\begin{equation}\label{E:betainverse}
x = a_{1}\partial_{1} + \sum_{l=2}^{n} \frac{c_{l}}{a_{1}}\xi_{1}\xi_{l}\partial_{l} + \sum_{\substack{r,s,t\\  1 < r < s}}b_{r,s,t}\xi_{r}\xi_{s}\partial_{t},
\end{equation} where $a_{1}, b_{r,s,t} \in \C$ and $a_{1}\neq 0.$ 

\end{itemize}
\end{lem}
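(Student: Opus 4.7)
The plan is to solve $\beta(w)=h$ directly. I would parametrize $w\in\La_{-1}\oplus\La_{1}$ as $w=x+y$ with $x=\sum_{m}a_{m}\partial_{m}$ and $y=\sum_{r<s,\,t}b_{r,s,t}\xi_{r}\xi_{s}\partial_{t}$, compute $\beta(w)=[x,y]$ in the basis $\{\xi_{p}\partial_{q}\}$ of $\La_{0}$, and read off constraints on the coefficients from $\beta(w)=h$. The whole bracket computation reduces to the single identity
\[
[\partial_{m},\xi_{i}\xi_{j}\partial_{k}]=\delta_{m,i}\,\xi_{j}\partial_{k}-\delta_{m,j}\,\xi_{i}\partial_{k}\qquad(i<j),
\]
which I would verify by evaluating both sides on each generator $\xi_{\ell}$ of $\Lambda(n)$. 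Expanding $[x,y]$ then gives, under the identification $\xi_{p}\partial_{q}\leftrightarrow e_{p,q}$, that the $(p,q)$-entry of $\beta(w)$ equals
\[
M_{p,q}\;=\;\sum_{r<p}a_{r}b_{r,p,q}\;-\;\sum_{s>p}a_{s}b_{p,s,q}.
\]

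The decisive reorganization is to encode the $b_{r,s,q}$, for each fixed $q$, as an antisymmetric matrix $B^{(q)}\in M_{n}(\C)$ with entries $B^{(q)}_{r,s}=b_{r,s,q}$ for $r<s$. A direct check using antisymmetry shows $M_{p,q}$ is the $p$-th entry of $-B^{(q)}\vec a$, where $\vec a=(a_{1},\dots,a_{n})^{T}$, so the full system $M_{p,q}=c_{p}\delta_{p,q}$ collapses to
\[
B^{(q)}\vec a\;=\;-c_{q}\,\vec e_{q},\qquad q=1,\dots,n.
\]
Contracting with $\vec a$, the left side vanishes by antisymmetry of $B^{(q)}$, forcing $c_{q}a_{q}=0$ for every $q$. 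Since $h$ is regular, the $c_{q}$ are pairwise distinct by Lemma~\ref{L:basiclemma}(b), so at most one of them is zero.

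Part (a) is then immediate: if every $c_{q}$ is nonzero, then $\vec a=0$, but the right-hand side $-c_{q}\vec e_{q}$ is nonzero, so no $w$ exists and $\beta^{-1}(h)=\emptyset$. For part (b), $c_{1}=0$ together with $c_{q}\ne 0$ for $q\geq 2$ forces $a_{q}=0$ for $q\geq 2$, i.e.\ $\vec a=a_{1}\vec e_{1}$. Substituting back and reading off the first column of each $B^{(q)}$ yields $a_{1}b_{1,q,q}=c_{q}$ and $a_{1}b_{1,p,q}=0$ for $p>1,\,p\ne q$ (when $q\geq 2$), together with $a_{1}b_{1,p,1}=0$ for $p>1$. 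Since some $c_{q}$ is nonzero, $a_{1}\ne 0$, so $b_{1,q,q}=c_{q}/a_{1}$ for $q\geq 2$ and all other $b_{1,p,q}$ vanish. The coefficients $b_{r,s,t}$ with $1<r<s$ appear in no equation and are therefore free; collecting everything produces exactly~\eqref{E:betainverse}. The only real content is the antisymmetric-matrix packaging combined with the observation that $\vec a^{T}B^{(q)}\vec a=0$; the remainder is bookkeeping.
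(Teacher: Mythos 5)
Your proof is correct and follows essentially the same strategy as the paper's (which is only sketched there): write out $\beta$ in the standard basis, deduce $c_q a_q=0$ for every $q$, and then use regularity of $h$ to pin down $\vec a$ before back-substituting. Your packaging of the structure constants into antisymmetric matrices $B^{(q)}$, so that the contraction $\vec a^{T}B^{(q)}\vec a=0$ yields $c_q a_q=0$ in one line, is a clean way to organize the cancellation the paper leaves implicit as ``a direct calculation of $\beta(x)$.''
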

\begin{proof}  We only sketch the calculation here.  First, let $x \in \beta^{-1}(h)$  
and write $x \in \fg_{-1}\oplus \fg_{1}$ in our preferred basis:
\begin{equation}\label{E:xinhfiber}
x = \sum_{i} a_{i}\partial_{i} + \sum_{\substack{i,j,k \\ i < j}}b_{i,j,k}\xi_{i}\xi_{j}\partial_{k}.
\end{equation}
By a direct calculation of $\beta(x),$ one sees that if $a_{t} \neq 0$ for some $1 \leq t \leq n,$ then necessarily $c_{t}=0.$  
Therefore, if all the coeffients of $h$ are nonzero, then there is no $x$ such that $\beta(x)=h.$  This proves part (a).  Now say $c_{1}=0.$  Since $h$ is regular, $c_{2},\dotsc ,c_{n}$ are all nonzero by Lemma~\ref{L:basiclemma}(b). 
But then by the proof of part (a) one has $a_{2}=\dotsb = a_{n}=0.$  This observation simplifies the calculation of $\beta(x).$  
Doing so and using that the image is equal to $h,$ one obtains \eqref{E:betainverse}.
\end{proof}

\begin{prop}\label{P:semisimple}  Let $h \in \Lsa$ be a semisimple regular element as in part (b) of the previous lemma.  
Let $x_{0} \in \beta^{-1}(h)$ be chosen so that all the coefficents $b_{r,s,t}$ are zero in \eqref{E:betainverse}.  Then $x_{0} \in \fg_{-1}\oplus \fg_{1}$ is a semisimple element.
\end{prop}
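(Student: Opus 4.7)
The strategy is to show that the $G_0$-orbit $G_0\cdot x_0$ is closed in $\fg_{-1}\oplus\fg_1$, which is the definition of semisimplicity. I reduce this to a closedness question for the torus orbit $T\cdot x_0$ inside the fiber $\beta^{-1}(h)$, exploiting the $G_0$-equivariance of $\beta$ and the fact that $h=\beta(x_0)$ is a regular semisimple element of $\fg_0$, so by Lemma~\ref{L:basiclemma} its $G_0$-orbit is closed and $\Stab_{G_0}(h)=T$.

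The argument proceeds in three steps. First, since $G_0\cdot h$ is closed in $\fg_0$, the preimage $\beta^{-1}(G_0\cdot h)$ is closed in $\fg_{-1}\oplus\fg_1$, and it equals $G_0\cdot\beta^{-1}(h)$ by Lemma~\ref{L:basiclemma2}(b); we may therefore work inside this closed subvariety. Second, the natural $G_0$-equivariant map $G_0\times_T\beta^{-1}(h)\to\beta^{-1}(G_0\cdot h)$, $[g,x]\mapsto g\cdot x$, is a bijection: two points of $\beta^{-1}(h)$ lie in the same $G_0$-orbit precisely when they differ by an element of $\Stab_{G_0}(h)=T$. In fact it is an isomorphism of affine varieties, and under this associated-bundle identification closed $G_0$-orbits downstairs correspond to closed $T$-orbits on $\beta^{-1}(h)$, so it suffices to show $T\cdot x_0$ is closed in $\beta^{-1}(h)$. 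Third, Lemma~\ref{L:semisimplereg}(b) presents $\beta^{-1}(h)$ as an affine variety with coordinates $a_1\in\C^*$ and the collection of free parameters $b_{r,s,t}$; a direct weight computation shows that $T$ acts on these coordinates with weight $-\varepsilon_1$ on $a_1$ and weight $\varepsilon_r+\varepsilon_s-\varepsilon_t$ on each $b_{r,s,t}$. Since $x_0$ corresponds to the point with all $b_{r,s,t}=0$, its $T$-orbit is precisely the subvariety $\C^*\times\{0\}$ cut out by the vanishing of the $b_{r,s,t}$, which is closed.

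The main obstacle is rigorously establishing step two, in particular identifying the associated bundle $G_0\times_T\beta^{-1}(h)$ with $\beta^{-1}(G_0\cdot h)$ as affine varieties and verifying that this identification exchanges closed orbits. This is standard geometric invariant theory for a reductive group acting on an affine variety, but one must check the hypotheses apply in the present setting. Steps one and three are essentially formal computations given the explicit description in Lemma~\ref{L:semisimplereg}.
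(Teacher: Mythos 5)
Your proof is correct but takes a genuinely different route from the paper's. The paper's argument is a direct stabilizer-dimension count: for any $x \in \beta^{-1}(h)$ one has $\Stab_{G_0}(x) \subseteq \Stab_{G_0}(h)=T$ by Lemma~\ref{L:basiclemma2}(a); the constraint coming from the nonzero terms $a_1\partial_1$ and $\frac{c_l}{a_1}\xi_1\xi_l\partial_l$ forces $t_1=1$, giving stabilizer contained in $T_{n-1}$, with equality iff all $b_{r,s,t}=0$ (any nonzero $b_{r,s,t}$ imposes an additional multiplicative constraint $t_rt_s=t_t$); hence $x_0$ has maximal stabilizer dimension, equivalently minimal orbit dimension, in the closed set $\beta^{-1}(G_0\cdot h)$, and the standard fact that minimal-dimensional orbits in a $G$-stable closed set are closed concludes. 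You instead pass to the associated bundle $G_0\times_T\beta^{-1}(h)\cong\beta^{-1}(G_0\cdot h)$ and show $T\cdot x_0$ is closed by the explicit weight decomposition. Both are sound: the paper's is more elementary and self-contained; yours gives a cleaner geometric picture by reducing a $G_0$-orbit closure question to a one-dimensional torus orbit computation, at the cost of invoking the GIT fact (requiring $T$ reductive) that closed $T$-orbits on the slice correspond to closed $G_0$-orbits downstairs, plus the identification of $\beta^{-1}(G_0\cdot h)$ with the associated bundle via the $G_0$-equivariant map to $G_0\cdot h\cong G_0/T$. Two points worth making explicit in your write-up: (1) the identification you need in step two is the one arising from the $G_0$-equivariant morphism $\beta: \beta^{-1}(G_0\cdot h)\to G_0\cdot h\cong G_0/T$ (isomorphism of orbit with homogeneous space uses separability in characteristic zero), not just an abstract bijection; and (2) your step three implicitly uses that \emph{every} element of the form \eqref{E:betainverse} lies in $\beta^{-1}(h)$, i.e.\ that the $b_{r,s,t}$ are genuinely free parameters, which holds because $[\partial_1, \xi_r\xi_s\partial_t]=0$ whenever $1<r<s$, though Lemma~\ref{L:semisimplereg}(b) as stated only gives the containment in one direction.
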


\begin{proof} First one computes $\Stab_{G_{0}}(x)$ for any $x \in \beta^{-1}(h).$  By
 Lemma~\ref{L:basiclemma2} and Lemma~\ref{L:basiclemma}(a)  one has $\Stab_{G_{0}}(x) \subseteq \Stab_{G_{0}}(h)= T.$  By part (b) of the previous lemma $x$ is a linear combination of distinct weight vectors. From this one sees that $t=\operatorname{diag}(t_{1}, \dotsc , t_{n}) \in T$ fixes $x$ if $t_{1}=1.$  
This is also sufficient in the case of $x_{0}$.  Otherwise there will be additional contraints on $t$ 
and the stabilizer will be a proper, smaller dimensional subgroup of 
\begin{equation}\label{E:Tnminus1}
T_{n-1}:=\{t=\operatorname{diag}(t_{1}, \dotsc , t_{n})  \in T \mid t_{1}=1 \}.
\end{equation}
Thus $x_{0}$ has maximal stabilizer dimension and, hence, minimal orbit dimension in the closed set $\beta^{-1}(G.h).$   
It follows that $G.x_{0}$ must be closed.
\end{proof}

\subsection{}\label{SS:principal} Let $G$ be a reductive algebraic group acting on an affine variety 
$X$.  Let $\pi: X \to X/G$ be the canonical quotient map.  An element $\zeta \in X/G$ is said to be \emph{principal} if 
there is an open neighborhood $U$ such that $\zeta \in U \subseteq X/G$ and 
for any semisimple $x , y \in \pi^{-1}(U),$ the groups $\Stab_{G}(x)$ and $\Stab_{G}(y)$ are conjugate in $G$ \cite[Definition 3.2, Remark 3.3]{LR}.  
 Let $(X/G)_{\text{pr}}$ denote the set of principal elements of $X/G.$  By \cite[Lemma 3.4]{LR} $(X/G)_{\text{pr}}$ is a 
nonempty, dense, open subset of $X/G.$

Let 
\begin{align*}
\pi : \fg_{-1} \oplus \fg_{1} \to (\fg_{-1} \oplus \fg_{1})/G_{0} && \text{and} && p: \La_{0} \to \La_{0}/G_{0}
\end{align*}
denote the canonical quotient morphisms.  Let $\varphi: (\fg_{-1} \oplus \fg_{1})/G_{0} \to \La_{0}/G_{0}$ be the morphism induced by the map $p \circ \beta: \fg_{-1} \oplus \fg_{1} \to \La_{0}/G_{0}.$  That is, the following diagram commutes.
\begin{figure}[ht]
\setlength{\unitlength}{.5cm}
\begin{center}
\begin{picture}(17,5)
\put (3,4){$\fg_{-1} \oplus \fg_{1}$}
\put (10,4){$\fg_{0}$}
\put (16,4){$\fg_{0}/G_{0}$}
\put (2.5,0){$(\fg_{-1} \oplus \fg_{1})/G_{0}$}
\put(6.5,4){\vector(1,0){2.75}}
\put(12,4){\vector(1,0){2.75}} 
\put(3.5,3.4){\vector(0,-1){2.5}}
\put(5,1){\vector(4,1){10}}

\put (7.5,4.5){$\beta$}
\put (13,4.5){$p$}
\put (2.5,2){$\pi$}
\put (10,1.5){$\varphi$}
\end{picture}
\end{center}
\end{figure}

We observe that it follows from Lemma~\ref{L:basiclemma} that the set $(\fg_{0}/G_{0})_{\text{pr}}$ is precisely the image under $p$ of the semisimple regular elements of $\Lsa$.  

\begin{prop}\label{P:principal}  Let $x_{0} \in \fg_{-1} \oplus \fg_{1}$ be as in Proposition~\ref{P:semisimple}.  Then $\pi(x_{0})$ is a principal element of $(\fg_{-1} \oplus \fg_{1})/G_{0}.$
\end{prop}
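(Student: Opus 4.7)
The plan is to exhibit an open neighborhood $U$ of $\pi(x_{0})$ in $(\fg_{-1}\oplus\fg_{1})/G_{0}$ such that every semisimple $x \in \pi^{-1}(U)$ has $\Stab_{G_{0}}(x)$ conjugate in $G_{0}$ to $T_{n-1}$, which is the stabilizer of $x_{0}$ computed in the proof of Proposition~\ref{P:semisimple}. The definition of principal element will then follow.

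The natural choice is $U := \varphi^{-1}(U_{0})$ where $U_{0} := (\fg_{0}/G_{0})_{\mathrm{pr}}$; this set is open in $(\fg_{-1}\oplus\fg_{1})/G_{0}$ and contains $\pi(x_{0})$ because $\beta(x_{0}) = h$ is semisimple regular. Given a semisimple $x \in \pi^{-1}(U)$, I would first verify that $\beta(x)$ is itself semisimple and regular. Writing the Jordan decomposition $\beta(x) = \beta(x)_{s} + \beta(x)_{n}$ in $\fg_{0} \cong \gl$, one has $p(\beta(x)_{s}) = p(\beta(x)) = \varphi(\pi(x)) \in U_{0}$, so by Lemma~\ref{L:basiclemma} the element $\beta(x)_{s}$ is $G_{0}$-conjugate to a regular element of $\fh$; its centralizer in $\fg_{0}$ is therefore a maximal torus, and the commuting nilpotent $\beta(x)_{n}$ must vanish. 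Replacing $x$ by a $G_{0}$-conjugate (which leaves the conjugacy class of $\Stab_{G_{0}}(x)$ unchanged) and then acting by a Weyl group representative in $N_{G_{0}}(T)$, I may further assume $\beta(x) = h' \in \fh$ with zero first coefficient. Lemma~\ref{L:semisimplereg}(b) then puts $x$ in the form
\[
x = a'_{1}\partial_{1} + \sum_{l=2}^{n}\frac{c'_{l}}{a'_{1}}\xi_{1}\xi_{l}\partial_{l} + \sum_{\substack{r,s,t\\ 1<r<s}} b'_{r,s,t}\,\xi_{r}\xi_{s}\partial_{t}.
\]

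The crux, and the step I expect to be the main obstacle, is to show that semisimplicity of $x$ forces all $b'_{r,s,t}$ to vanish. For this I would employ the one-parameter subgroup $\tau\colon \C^{*} \to T_{n-1}$ given by $\tau(u) = \operatorname{diag}(1,u,u,\ldots,u)$. A direct weight computation shows that $\tau(u)$ fixes $\partial_{1}$ (weight $-\varepsilon_{1}$) and each $\xi_{1}\xi_{l}\partial_{l}$ (weight $\varepsilon_{1}$), while it scales each $\xi_{r}\xi_{s}\partial_{t}$ with $1<r<s$ by $u^{2}$ if $t=1$ and by $u$ if $t>1$. Consequently $\lim_{u\to 0}\tau(u)\cdot x$ exists and equals the element $x'_{0}$ obtained from $x$ by deleting the $b'_{r,s,t}$ terms. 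Since $x$ is semisimple, the orbit $G_{0}\cdot x$ is closed, so $x'_{0}\in G_{0}\cdot x$, forcing $G_{0}\cdot x = G_{0}\cdot x'_{0}$. However, if some $b'_{r,s,t}$ were nonzero, the argument in the proof of Proposition~\ref{P:semisimple} would give $\Stab_{G_{0}}(x) \subsetneq T_{n-1} = \Stab_{G_{0}}(x'_{0})$, contradicting the equality of these two orbits. Hence $x = x'_{0}$, $\Stab_{G_{0}}(x) = T_{n-1}$, and undoing the initial conjugations shows that the original $x$ has stabilizer $G_{0}$-conjugate to $T_{n-1}$, completing the verification that $\pi(x_{0})$ is principal.
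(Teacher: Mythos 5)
Your proof follows essentially the same route as the paper's: both set $U = \varphi^{-1}\bigl((\fg_{0}/G_{0})_{\mathrm{pr}}\bigr)$, reduce a semisimple $x \in \pi^{-1}(U)$ (up to $G_{0}$-conjugacy) to the form of Lemma~\ref{L:semisimplereg}(b), and then conclude its stabilizer is $T_{n-1}$. The one place you diverge is in how you justify the two key reductions, and in both cases you supply more detail than the paper does. First, the paper argues that $\beta(y)$ is semisimple regular by noting that $\beta(y)\in p^{-1}(\eta)$ and $p^{-1}(\eta)=G_{0}.h$ when $\eta$ is principal; you instead use the Jordan decomposition of $\beta(x)$ and the fact that the centralizer of a regular semisimple element contains no nonzero nilpotents. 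Both are correct and roughly equivalent. Second, and more substantively, the paper's final step asserts ``by that calculation and the fact that $y$ is semisimple one sees that the stabilizer of $y$ is $T_{n-1}$'' without explanation, whereas you supply the actual argument: the one-parameter subgroup $\tau(u)=\operatorname{diag}(1,u,\dotsc,u)$ degenerates $x$ to the element $x_{0}'$ with the $b_{r,s,t}$ terms deleted, so closedness of $G_{0}.x$ forces $G_{0}.x=G_{0}.x_{0}'$, and a stabilizer-dimension comparison then forces the $b$'s to vanish. This is a genuine improvement in rigor over the paper's terse phrasing, even though the overall strategy is the same.
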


\begin{proof}   
Let $U:=\varphi^{-1}\left((\fg_{0}/G_{0})_{\text{pr}} \right)$.  By definition, $\beta(x_{0})$ is semisimple and 
regular, so $p(\beta(x_{0}))=\varphi(\pi(x_{0}))$ is principal in $\La_{0}/G_{0}$.  That is, $\pi(x_{0}) \in U.$  
Therefore $U$ is a nonempty open neighborhood of $\pi(x_{0})$ in $(\fg_{-1} \oplus \fg_{1})/G_{0}.$  Let $\zeta \in  U$ and let $y \in \fg_{-1} \oplus \fg_{1}$ be 
 a semisimple element in $\pi^{-1}(\zeta).$  Then $\varphi(\pi(y))=p(\beta(y)),$ so $\beta(y) \in p^{-1}(\eta)$ for some 
principal $\eta \in \La_{0}/G_{0}.$  But then $\eta = p(h)$ for some semisimple regular $h \in \Lsa.$  However since $h$ is semisimple 
and regular, it follows that $p^{-1}(\eta)=G_{0}.h.$   That is, up to $G_{0}$-conjugacy one can assume $\beta(y)$ is a semisimple 
regular element of $\Lsa$.  However this implies that $y$ is of the form given in Lemma~\ref{L:semisimplereg} and the stabilizer of such elements was computed in the proof of Lemma~\ref{P:semisimple}.  By that calculation and the fact that $y$ is semisimple one sees that the stabilizer of $y$ is $T_{n-1}.$  Therefore all semisimple elements in the fibers of $U$ have stabilizer conjugate to $T_{n-1}$ and so $x_{0}$ is principal.
\end{proof}

\subsection{}\label{SS:precalculatingR}  The stage is now set to apply the results of Luna and Richardson \cite[Corollary 4.4]{LR} to calculate $R.$ To do so requires certain preliminary calculations. Let $x_{0} \in \fg_{-1} \oplus \fg_{1}$ be the semisimple element fixed in the previous section. 

Let 
\begin{equation}\label{E:Hdef}
H= \Stab_{G_{0}}(x_{0})=T_{n-1},
\end{equation} where the last equality is by the calculations made in the proof of Proposition~\ref{P:semisimple}.
Let 
\begin{equation}\label{E:Ndef}
N=\operatorname{Norm}_{G_{0}}(H) = \left\{g \in G_{0} \mid gHg^{-1}=H \right\}.
\end{equation}  Let us first calculate the group $N.$  

\begin{lem}\label{L:calculatingN}  Let $N=\operatorname{Norm}_{G_{0}}(H).$  Recall that $T$ is the torus of $G_{0}.$  Let $\Sigma_{n}$ be 
the permutation matrices of $G_{0}$ and let $\Sigma_{n-1}$ be the permutation matrices which normalize $T_{n-1}$. 
Then,
\[
N=T\Sigma_{n-1}.
\]
\end{lem}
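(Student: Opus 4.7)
The plan is to compute $N$ by first pinning down the centralizer of $H=T_{n-1}$ in $G_{0}$, then observing that $N$ is forced to lie in the normalizer of the torus, and finally identifying which permutations are actually allowed. I expect no single step to be the main obstacle; the argument is largely a careful bookkeeping of how diagonal subgroups and permutations interact.

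First I would compute the centralizer $Z:=Z_{G_{0}}(T_{n-1})$. The torus $T_{n-1}$ acts on the natural module $V=\C^{n}$ with eigenlines $\C e_{1},\C e_{2},\dotsc ,\C e_{n}$ corresponding to $n$ pairwise distinct characters (the trivial character on $\C e_{1}$ and the coordinate characters $\ep_{2},\dotsc ,\ep_{n}$ on the remaining lines). Any element of $G_{0}$ commuting with $T_{n-1}$ must preserve each eigenline, hence is diagonal. Thus $Z=T$.

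Next, since any element of $N$ conjugates $Z_{G_{0}}(H)$ to itself, one has the containment
\[
N \subseteq \operatorname{Norm}_{G_{0}}(Z)=\operatorname{Norm}_{G_{0}}(T)=T\Sigma_{n},
\]
using the standard fact that the normalizer of the maximal torus in $GL(n)$ is $T\Sigma_{n}$. On the other hand $T\subseteq N$ trivially because $T$ is abelian and $T_{n-1}\subseteq T$. Since $T\cap \Sigma_{n}=\{I\}$, one obtains $N=T\cdot(N\cap \Sigma_{n})$, so the problem reduces to identifying the permutation matrices that normalize $T_{n-1}$.

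Finally I would compute $N\cap \Sigma_{n}$ directly. Conjugation of $\operatorname{diag}(t_{1},\dotsc ,t_{n})$ by the permutation matrix $P_{\sigma}$ yields $\operatorname{diag}(t_{\sigma^{-1}(1)},\dotsc ,t_{\sigma^{-1}(n)})$. For the defining condition of $T_{n-1}$ (namely $t_{1}=1$) to be preserved under this conjugation, one needs $\sigma^{-1}(1)=1$, equivalently $\sigma(1)=1$. The subset of such $\sigma$ is exactly the group $\Sigma_{n-1}$ of permutations fixing the first coordinate, which is precisely how $\Sigma_{n-1}$ was defined in the statement. Assembling the pieces gives $N=T\Sigma_{n-1}$, as claimed.
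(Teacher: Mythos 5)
Your proposal is correct, and it reaches the same conclusion as the paper but by a cleaner route for the key containment $N\subseteq\operatorname{Norm}_{G_0}(T)$. The paper establishes this by fixing a single regular semisimple element $t_0\in T_{n-1}$, noting $\Stab_{G_0}(t_0)=T$, and then verifying by an explicit conjugation calculation that for any $n\in N$ and $t\in T$ the element $ntn^{-1}$ centralizes $t_0$. You instead compute the full centralizer $Z_{G_0}(T_{n-1})=T$ directly, using the observation that $T_{n-1}$ has $n$ pairwise distinct characters on $V=\C^n$ (the trivial character on $\C e_1$ and the coordinate characters $\epsilon_2,\dotsc,\epsilon_n$), so anything commuting with $T_{n-1}$ must preserve each eigenline and hence be diagonal; you then invoke the general principle that a normalizer of $H$ normalizes $Z_{G_0}(H)$. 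This is more conceptual and avoids the bookkeeping in the paper's inline computation. Both approaches then conclude by the same final step: $T\cap\Sigma_n=\{I\}$ gives $N=T\cdot(N\cap\Sigma_n)$, and conjugation $P_\sigma\operatorname{diag}(t_1,\dotsc,t_n)P_\sigma^{-1}=\operatorname{diag}(t_{\sigma^{-1}(1)},\dotsc,t_{\sigma^{-1}(n)})$ preserves the condition $t_1=1$ exactly when $\sigma(1)=1$, yielding $N\cap\Sigma_n=\Sigma_{n-1}$. The tradeoff is negligible: your argument is a touch more abstract and reusable, while the paper's regular-element computation is more in keeping with the surrounding lemmas (which already work with regular semisimple elements and their stabilizers).
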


\begin{proof}  The first step is to prove that $N \subseteq \Norm_{G_{0}}(T)$.  Fix a semisimple 
regular element $t_{0} \in T_{n-1}$ (for the action of $G_{0}$ on itself by conjugation).  
Then $T=\Stab_{G_{0}}(t_{0})$.  Let $n \in N$.  We claim that $nTn^{-1}$ fixes $t_{0},$ 
hence $nTn^{-1}=T$, hence $n \in \Norm_{G_{0}}(T)$.  Let $t \in T$ and consider
\[
(ntn^{-1})t_{0}(ntn^{-1})^{-1}=ntn^{-1}t_{0}nt^{-1}n^{-1}.
\]  However, since $t_{0} \in T_{n-1}$ and $n^{-1} \in \Norm_{G_{0}}(T_{n-1}),$ one has 
that $n^{-1}t_{0}n \in T_{n-1} \subseteq T;$  since $t \in T$ and $T$ fixes $T$ 
pointwise under conjugation, one has $tn^{-1}t_{0}nt^{-1}=n^{-1}t_{0}n.$  Thus, 
\[
ntn^{-1}t_{0}nt^{-1}n^{-1}=nn^{-1}t_{0}nn^{-1}=t_{0}.
\]  Therefore, $ntn^{-1} \in \Stab_{G_{0}}(t_{0})=T.$  That is, as discussed above, $n \in \Norm_{G_{0}}(T)=T\Sigma_{n}.$

One can now verify that $T$ fixes $H$ pointwise, and that the elements of $\Sigma_{n}$ which stabilize $H$ 
are precisely $\Sigma_{n-1}$.
\end{proof}
We next need to calculate $\ff_{\1}:= (\fg_{-1} \oplus \fg_{1})^{H}.$
\begin{lem}\label{L:calculatingF}  The subvariety $\Lta_{\bar{1}}=(\fg_{-1} \oplus \fg_{1})^{H}$ is the $\C$-span of the vectors 
$$\{\partial _1, \xi_1\xi_i\partial _i\mid i=2, \dotsc , n\}.$$

\end{lem}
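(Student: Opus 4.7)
The plan is to exploit the fact that $\fg_{-1}\oplus\fg_{1}$ is a direct sum of $T$-weight spaces (hence $H$-weight spaces since $H=T_{n-1}\subseteq T$) with respect to the adjoint action, so that $(\fg_{-1}\oplus\fg_{1})^{H}$ is simply the sum of those weight spaces on which $H$ acts trivially.

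First I would record the $T$-weights of the basis vectors of $\fg_{-1}\oplus\fg_{1}$. With respect to the basis $\varepsilon_{1},\dots,\varepsilon_{n}$ of $\fh^{*}$ fixed in Section~\ref{S:OnWn}, the vector $\partial_{i}\in\fg_{-1}$ has weight $-\varepsilon_{i}$, and the vector $\xi_{i}\xi_{j}\partial_{k}\in\fg_{1}$ (with $i<j$) has weight $\varepsilon_{i}+\varepsilon_{j}-\varepsilon_{k}$. Since $H=T_{n-1}=\{\operatorname{diag}(1,t_{2},\dots,t_{n})\}$ as in \eqref{E:Tnminus1}, a weight $\mu=\sum_{l}c_{l}\varepsilon_{l}$ is trivial on $H$ if and only if $\prod_{l=2}^{n}t_{l}^{c_{l}}=1$ for every choice of $t_{2},\dots,t_{n}\in\C^{\times}$, which happens exactly when $c_{l}=0$ for all $l\geq 2$, i.e., when $\mu\in\C\varepsilon_{1}$.

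Next I would enumerate the basis vectors whose weight lies in $\C\varepsilon_{1}$. Among the $\partial_{i}$, the only one with weight in $\C\varepsilon_{1}$ is $\partial_{1}$. For $\xi_{i}\xi_{j}\partial_{k}$ with $i<j$, the requirement $\varepsilon_{i}+\varepsilon_{j}-\varepsilon_{k}\in\C\varepsilon_{1}$ forces $i=1$ (otherwise the $\varepsilon_{j}$ term cannot be cancelled, since $j>i\geq 2$ cannot equal $k$ while simultaneously cancelling the $\varepsilon_{i}$ term) and then forces $k=j$. This yields precisely $\xi_{1}\xi_{j}\partial_{j}$ for $j=2,\dots,n$.

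Since all the listed basis vectors lie in pairwise distinct weight spaces and span the union of the $H$-trivial weight spaces, they form a basis of $(\fg_{-1}\oplus\fg_{1})^{H}$, giving the claim. There is no real obstacle here beyond carefully tracking weights; the argument is a direct weight-space calculation using the explicit description of $H$ from \eqref{E:Tnminus1}.
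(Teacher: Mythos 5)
Your proof is correct and follows the same approach as the paper: identify $(\fg_{-1}\oplus\fg_{1})^{H}$ as the sum of the $T_{n-1}$-weight-zero subspaces and enumerate which basis vectors have weight supported only on $\varepsilon_{1}$. The paper leaves the weight enumeration to the reader, whereas you carry it out explicitly; there is no substantive difference.
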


\begin{proof}  Since $H=T_{n-1},$ $\Lta_{\bar{1}}$ is simply the span of all weight zero vectors with respect to this torus.  
Using the fixed basis of weight vectors for $\fg_{-1} \oplus \fg_{1}$ established in Section~\ref{S:prelims} it 
can be seen that $\Lta_{\bar{1}}$ is spanned by the given vectors.
\end{proof}

\subsection{Explicit Description of $\HH^{\bullet}(\fg,\fg_{0};\C )$} We can now give an explicit description of the cohomology ring $R$. Let $Y_{i} \in \ff_{\1}^{*}$ be given by $Y_{i}(\xi _1\xi _j\partial _j) = \delta_{i,j}$ ($i,j = 2, \dotsc , n$) and $Y_{i}(\partial_{1})=0.$  Let $\partial_{1}^{*} \in \ff_{\1}^{*}$ be given by $\partial_{1}^{*}(\xi _1\xi _j\partial _j)=0$ for all $j=2, \dotsc ,n$ and $\partial_{1}^{*}(\partial_{1})=1.$

\begin{thm}\label{T:calculatingR}  Restriction of functions defines an isomorphism,
\begin{equation*}
\HH^{\bullet}(\fg ,\fg_{0};\C ) \cong S(\Lta_{\bar{1}}^{*})^{N} = \C[Y_{2}\partial_{1}^{*}, \dotsc , Y_{n}\partial_{1}^{*}]^{\Sigma_{n-1}},
\end{equation*}
where $\Sigma_{n-1}$ acts on $Y_{2}\partial_{1}^{*}, \dotsc , Y_{n}\partial_{1}^{*}$ by permutations.  
In particular, $R$ is a polynomial ring in $n-1$ variables of degree $2, 4, \dotsc , 2n-2.$
\end{thm}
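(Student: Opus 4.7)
The plan is to combine Theorem~\ref{cohomologyring} with the Luna--Richardson theorem \cite[Corollary~4.4]{LR}, and then explicitly compute the resulting invariant ring in two stages.

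First I would observe that all the hypotheses of the Luna--Richardson theorem have now been established: Theorem~\ref{cohomologyring} gives $R \cong S((\fg_{-1}\oplus\fg_1)^*)^{G_0}$; Proposition~\ref{P:principal} exhibits a principal point $x_0$; its stabilizer is $H=T_{n-1}$ by \eqref{E:Hdef}, its normalizer is $N=T\Sigma_{n-1}$ by Lemma~\ref{L:calculatingN}, and the fixed subspace $\ff_{\1}=(\fg_{-1}\oplus\fg_1)^{H}$ is described in Lemma~\ref{L:calculatingF}. The Luna--Richardson theorem then yields an isomorphism $R \cong S(\ff_{\1}^*)^{N}$ via restriction of functions.

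Next I would compute $S(\ff_{\1}^*)^N$ by taking $T$-invariants first and then $\Sigma_{n-1}$-invariants. Under the full torus $T$, the functional $\partial_{1}^{*}$ has weight $\varepsilon_1$ while each $Y_i$ has weight $-\varepsilon_1$, so a monomial $(\partial_{1}^{*})^{a}Y_2^{b_2}\cdots Y_n^{b_n}$ is $T$-invariant precisely when $a=b_2+\cdots+b_n$. Such monomials are exactly the products of the elements $Z_i:=Y_i\partial_{1}^{*}$ for $i=2,\dotsc,n$, and the $Z_i$ are algebraically independent by a direct comparison of monomial supports (each $Z_i$ is the unique generator carrying the factor $Y_i$). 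Hence $S(\ff_{\1}^*)^T=\C[Z_2,\dotsc,Z_n]$, a polynomial ring in $n-1$ variables. Since $\Sigma_{n-1}$ fixes $\xi_1$ and permutes $\xi_2,\dotsc,\xi_n$, it fixes $\partial_{1}^{*}$ and permutes the $Y_i$, hence the $Z_i$, by the standard action. The fundamental theorem of symmetric polynomials then identifies $S(\ff_{\1}^*)^{N}=\C[Z_2,\dotsc,Z_n]^{\Sigma_{n-1}}$ with a polynomial ring in the $n-1$ elementary symmetric polynomials of $Z_2,\dotsc,Z_n$. Because each $Z_i$ lies in $S^{2}(\ff_{\1}^{*})$, it sits in cohomological degree $2$, so the $k$-th elementary symmetric polynomial lies in cohomological degree $2k$, giving the claimed generator degrees $2,4,\dotsc,2n-2$.

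The main potential obstacle is a clean verification that Luna--Richardson applies, which amounts to pinning down the principal isotropy subgroup $H$ and its normalizer $N$; all of this has been carried out in the preceding subsections, so what remains collapses to a weight count and an invocation of the classical theory of symmetric polynomials.
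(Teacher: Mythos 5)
Your proof is correct and takes essentially the same route as the paper: first invoke Luna--Richardson via the principal element $x_0$ (using Proposition~\ref{P:principal}), then compute $S(\ff_{\1}^*)^N$ by taking $T$-invariants first (yielding $\C[Y_2\partial_1^*,\dots,Y_n\partial_1^*]$) and then $\Sigma_{n-1}$-invariants. The paper leaves the $T$-invariant computation as an exercise; your explicit weight count ($\partial_1^*$ of weight $\varepsilon_1$, each $Y_i$ of weight $-\varepsilon_1$) and the identification of the final ring with the elementary symmetric polynomials in the $Z_i$ are exactly the details being elided.
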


\begin{proof}  The first isomorphism follows from \eqref{E:cohomiso} and \cite[Corollary 4.4]{LR}.  Namely, $x_{0} \in \fg_{-1} \oplus \fg_{1}$ is a semisimple element with $\pi(x_{0})$ a principal element of $(\fg_{-1} \oplus \fg_{1})/G_{0}$ so it follows by \cite[Corollary 4.4]{LR} that restriction of functions defines an isomorphism between $S((\fg_{-1} \oplus \fg_{1})^{*})^{G_{0}}$ and $S(\Lta_{\bar{1}}^{*})^{N}.$
Since $T$ is a normal subgroup of $N,$ one can first compute that $S^{\bullet}(\Lta_{\bar{1}}^{*})^{T}= \C[Y_{2}\partial_{1}^{*}, \dotsc , Y_{n}\partial_{1}^{*}]$  and check that $\Sigma_{n-1}$ acts on this ring by permuting the variables.
\end{proof}

\subsection{Detecting Subalgebra for $W(n)$} In \cite[Section 8]{BKN1} it was shown that the simple 
classical Lie superalgebras have either a polar or stable action of $G_{\0}$ on $\fg_{\bar{1}}$. 
As a consequence of this phenomenon one can show that there exists Lie subsuperalgebras 
such that the restriction homomorphism identifies the cohomology ring of $\fg$ with the invariants of the cohomology ring of the detecting subalgebra under the action of some finite pseudoreflection group.  In the case of $\fg = W(n),$ the action of $G_{0}$ on $\fg /\fg_{0}$ is neither polar nor stable.  Nevertheless, one can construct a similar detecting subalgebra for $\La$. 

Let $\ff_{\1}=(\fg_{-1} \oplus \fg_{1})^{H} \subset \fg_{\1}$ be the subspace calculated in Lemma~\ref{L:calculatingF} and let 
\[
\ff_{\0}=\Lie(N) =\Lie (T)= \fh \subset \fg_{\0}.
\] One can verify by direct calculation that 
\begin{equation}\label{E:erelations}
[\ff_{r},\ff_{s}] \subseteq \ff_{r+s}
\end{equation} for all $r,s \in \Z_2.$
Thus $\ff = \ff_{\0} \oplus \ff_{\1}$ is a Lie subsuperalgebra of $\La$ which we call a \emph{detecting subalgebra} of $\fg .$  By \cite[Lemma 2.5]{LR} $\ff$ is unique up to conjugacy in the sense that if one chooses another semisimple $x \in \fg_{-1}\oplus \fg_{1}$ such that $\pi(x)$ is principal, then following the aforementioned 
construction leads to a detecting subalgebra which is $G_{0}$-conjuate to $\ff .$

\subsection{}\label{SS:detectingcohom}  
Applying the definition of relative cohomology in Section~\ref{SS:relcohom} one can calculate $\HH^{\bullet}(\ff, \ff_{\0};\C )$ as follows.
First, note that the $\Z_2$-grading implies that the differentials defining $\HH (\ff, \ff_{\0};\C )$ are identically zero (cf.\ the proof of \cite[Theorem 2.5.2]{BKN1}).  Thus the cohomology is given by the cochains; that is,
\[
\HH (\ff,\ff_{\0};\C ) \cong S\left(\ff_{\1}^{*} \right)^{\ff_{\0 }}.
\]  Furthermore, note that the elements of $S(\ff_{\1}^{*})$ which are invariant under $\ff_{\0 }$ are simply those of weight zero with respect to the torus $T.$  Therefore, recalling that $S(\ff_{\1}^{*}) \cong \C [\partial_{1}^{*}, Y_{2}, \dotsc , Y_{n}],$ one has 
\[
\HH (\ff,\ff_{\0};\C ) \cong S\left(\ff_{\1}^{*} \right)^{T}=\C [Y_{2}\partial_{1}^{*}, \dotsc , Y_{n}\partial_{1}^{*}].
\]

The following theorem relates the  $\fg$ and $\ff$ cohomology rings via the natural restriction map. As in the classical case \cite{BKN1}, one has that they are related via the invariants of a finite reflection group.

\begin{thm}\label{T:relatingcohom}  Let $\fg = W(n)$ and let $\ff$ be the detecting superalgebra defined above.  The inclusion map $\ff \hookrightarrow \fg$ induces a restriction map $\operatorname{res}: \HH^{\bullet}(\fg,\fg_0;\C ) \to \HH^{\bullet}(\ff ,\ff_{\0};\C )$ so that the following diagram commutes:

\begin{equation}\label{E:commute0}
\begin{CD}
   \HH^{\bullet} (\fg,\fg_{0};\C )  @>^{\operatorname{res}}>>       \HH^{\bullet} (\ff,\ff_{\0};\C )   \\
@V\cong VV                                                           @VV\cong V\\
  \C [Y_{2}\partial_{1}^{*}, \dotsc , Y_{n}\partial_{1}^{*}]^{\Sigma_{n-1}} @>^{\subseteq}>>      \C [Y_{2}\partial_{1}^{*}, \dotsc ,Y_{n}\partial_{1}^{*}]  
\end{CD}
\end{equation}  

 That is, the restriction map induced by inclusion gives the following graded algebra isomorphism,
\begin{equation*}
\HH (\fg,\fg_{0};\C ) \xrightarrow{\cong}  \HH (\ff,\ff_{\0 };\C )^{\Sigma_{n-1}} \cong \C [Y_{2}\partial_{1}^{*}, \dotsc , Y_{n}\partial_{1}^{*}]^{\Sigma_{n-1}}.\\
\end{equation*}
In particular, both $\HH (\fg,\fg_{0};\C )$ and $\HH (\ff,\ff_{\0 };\C )$ are isomorphic to graded polynomial rings in $n-1$ variables.  
\end{thm}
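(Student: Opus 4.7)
The plan is to deduce the theorem by assembling results already established in the paper. First I would unwind the restriction map: the pair-inclusion $(\ff, \ff_{\0}) \hookrightarrow (\fg, \fg_{0})$ induces an inclusion of $\ff_{\0}$-supermodules $\ff/\ff_{\0} \hookrightarrow \fg/\fg_{0}$, and pulling back multilinear forms yields a graded algebra map
\[
\operatorname{res}^{\bullet}\colon \Hom_{\fg_{0}}(\Lambda_s^{\bullet}(\fg/\fg_{0}), \C) \longrightarrow \Hom_{\ff_{\0}}(\Lambda_s^{\bullet}(\ff/\ff_{\0}), \C).
\]
As observed in the proof of Theorem~\ref{cohomologyring} and in Section~\ref{SS:detectingcohom}, both cochain complexes have vanishing differentials, so $\operatorname{res}^{\bullet}$ is already the induced map on cohomology.

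Next I would identify $\operatorname{res}^{\bullet}$ explicitly. Since $\ff/\ff_{\0} = \ff_{\1}$ sits inside the single summand $\fg_{-1}\oplus\fg_{1}$ of $\fg/\fg_{0}$ that survives the weight argument of Theorem~\ref{cutinvariants}, and since polynomial $T$-invariants agree with polynomial $\ff_{\0} = \Lie(T)$-invariants (as $T$ is connected), the map $\operatorname{res}^{\bullet}$ becomes the restriction of polynomial functions
\[
S\left((\fg_{-1}\oplus\fg_{1})^{*}\right)^{G_{0}} \longrightarrow S(\ff_{\1}^{*})^{T}
\]
along the inclusion $\ff_{\1}\subseteq \fg_{-1}\oplus\fg_{1}$. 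By Theorem~\ref{T:calculatingR}, precisely this restriction map is an isomorphism onto $S(\ff_{\1}^{*})^{N}$.

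Finally, Lemma~\ref{L:calculatingN} gives $N = T\Sigma_{n-1}$ with $T$ normal in $N$, and taking invariants in two stages produces
\[
S(\ff_{\1}^{*})^{N} = \left(S(\ff_{\1}^{*})^{T}\right)^{\Sigma_{n-1}} \cong \HH^{\bullet}(\ff, \ff_{\0}; \C)^{\Sigma_{n-1}}.
\]
Chaining these identifications yields the commutative diagram \eqref{E:commute0} and the asserted isomorphism. The polynomial-ring description of $\HH^{\bullet}(\fg, \fg_{0}; \C)$ is part of Theorem~\ref{T:calculatingR}, while the one for $\HH^{\bullet}(\ff, \ff_{\0}; \C)$ is recorded in Section~\ref{SS:detectingcohom}.

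The main obstacle is not conceptual but organizational: one must verify carefully that the abstractly defined relative-cohomology restriction $\operatorname{res}^{\bullet}$ agrees, under the identifications from Theorem~\ref{cutinvariants}, Theorem~\ref{cohomologyring}, and the parallel computation for $\ff$, with the naive restriction of polynomials from $(\fg_{-1}\oplus\fg_{1})^{*}$ to $\ff_{\1}^{*}$ used in the Luna--Richardson framework. This amounts to a careful chase through the identifications; once the compatibility is in hand, the theorem is immediate.
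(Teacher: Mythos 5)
Your proposal is correct and takes essentially the same approach as the paper: the paper's one-line proof cites Theorem~\ref{T:calculatingR} and asserts that commutativity of the diagram can be checked directly, and your write-up simply fills in the identifications (vanishing differentials so cohomology equals cochains, restriction of functions along $\ff_{\1}\hookrightarrow \fg_{-1}\oplus\fg_{1}$, and the two-stage computation of $N$-invariants via $N=T\Sigma_{n-1}$) that make that check explicit.
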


\begin{proof}  The isomorphisms are a reinterpretation of Theorem~\ref{T:calculatingR} in terms of cohomology and 
the commutativity of the diagram can be checked directly.  
\end{proof}

\section{Support varieties}\label{S:suppvar}

\subsection{Support Varieties}\label{SS:supportvarieties} Let $(\fa, \fa_{\diamond})$ denote one of the pairs 
$(\fg, \fg_{0})$, $(\ff , \ff_{\0}).$  Let  
$M$ and $N$ be objects of $\mathcal{C}:=\mathcal{C}_{(\fa,\fa_{\diamond})}$ such that
$$\Ext_{\mathcal{C}}^\bullet (M,N)\cong \HH^\bullet (\fa, \fa_{\diamond}; \Hom_{\C}(M, N))$$ is 
finitely generated as an $\HH (\fa,\fa_{\diamond};\C )$-module; e.g.\ when $M$ and $N$ are finite dimensional by Theorem~\ref{finitegeneration} for $(\fg, \fg_{0})$ and by \cite[Theorem 2.5.3]{BKN1} for $(\ff ,\ff_{\0 })$ (since $\ff$ is a classical Lie superalgebra). Let 
$$I_{(\fa, \fa_{\diamond})}(M, N)= \Ann_{\HH^{\bullet} (\fa,\fa_{\diamond};\C )}(\HH^\bullet (\fa , \fa_{\diamond}; \Hom_{\C}(M, N)))$$
be the annihilator ideal of this module. The  \emph{support variety} of the pair 
$(M,N)$ is defined to be
$$\V_{(\fa , \fa_{\diamond})}(M, N)= \MaxSpec(\HH^{\bullet} (\fa,\fa_{\diamond};\C )/I_{(\fa, \fa_{\diamond})}(M, N)),$$
the maximal ideal spectrum of the quotient of $\HH^{\bullet} (\fa,\fa_{\diamond};\C )$ by $I_{(\fa, \fa_{\diamond})}(M, N)$. 
In particular, when $M=N$, the \emph{support variety} of $M$ is 
$$\V_{(\fa, \fa_{\diamond})}(M)=\MaxSpec(\HH^{\bullet} (\fa,\fa_{\diamond};\C )/I_{(\fa , \fa_{\diamond})}(M, M)).$$  
In light of the calculations in the previous section $\V_{(\fa,\fa_{\diamond})}(M)$ can be identified with the conical affine subvariety of 
\[
\MaxSpec (\HH (\fa,\fa_{\diamond};\C ))=\V_{(\fa, \fa_{\diamond})}(\C)\cong \mathbb{A}^{n-1},
\]  defined by the ideal $I_{(\fa, \fa_{\diamond})}(M, M).$  For brevity in what follows we write 
\[
I_{(\fa, \fa_{\diamond})}(M)=I_{(\fa, \fa_{\diamond})}(M, M).
\]

\subsection{}\label{SS:relatingvarieties}  The inclusion $\ff \hookrightarrow \fg$ induces a restriction map on cohomology which, in turn, induces maps of support varieties.  That is, given supermodules $M$ and $N$ in $\mathcal{C}_{(\fg,\fg_{0})}$ one has $M, N \in \mathcal{C}_{(\ff,\ff_{\0})}$ by restriction to $\ff$ and one has maps of varieties
\begin{align*}
\resstar &:\V_{(\ff, \ff_{\0})}(M,N) \to \V_{(\fg, \fg_{0})}(M,N),\\
\resstar &:\V_{(\ff, \ff_{\0})}(M) \to \V_{(\fg, \fg_{0})}(M).
\end{align*}  Viewing the support varieties as subvarieties of $\mathbb{A}^{n-1}$ and using Theorem~\ref{T:relatingcohom} one can explicitly describe this map as the quotient by the action of $\Sigma_{n-1}$ on $\mathbb{A}^{n-1}$ by permutation of coordinates.  Therefore one has
\begin{equation}\label{E:resstarimage}
\V_{(\ff, \ff_{\0})}(M)/\Sigma_{n-1} \cong \resstar \left( \V_{(\ff, \ff_{\0})}(M)\right) \subseteq \V_{(\fg, \fg_{0})}(M).
\end{equation}  We conjecture that the inclusion in \eqref{E:resstarimage} is in fact an equality for all finite dimensional $\fg$-supermodules $M \in \mathcal{C}_{(\fg,\fg_{0})}$.

\subsection{Rank Varieties}\label{SS:rankvarieties} An important motivation for introducing the detecting subalgebra $\ff$ 
is that one can describe its support varieties using the theory of rank varieties.  However, the situation is 
markedly different than for the classical Lie superalgebras.  To demonstrate these differences and 
make the relationship precise we introduce another Lie subsuperalgebra of $\fg.$

Let $\tilde{\ff}_{\0}=\Lie (H) \subset \fg_{0}$ and $\tilde{\ff}_{\1}=\ff_{\1}.$  Set
\begin{equation}\label{E:tildefdef}
\tilde{\ff} = \tilde{\ff}_{\0} \oplus \tilde{\ff}_{\1}.
\end{equation}  As with $\ff$ one can verify that $\tilde{\ff}$ is a Lie subsuperalgebra of $\fg.$  In fact one has 
\begin{equation}\label{E:tildefbrackets}
\left[\tilde{\ff}_{\0},\tilde{\ff}_{\0} \right]= \left[\tilde{\ff}_{\0},\tilde{\ff}_{\1} \right]=0. 
\end{equation}
One can also verify that the differentials defining $\HH (\tilde{\ff},\tilde{\ff }_{\0};\C )$ are identically zero and so the cohomology ring is again given by the cochains.  In this case, however, $\tilde{\ff}_{\0}$ acts trivially on $\tilde{\ff}_{\1}$ and so one has 
\[
\HH (\tilde{\ff},\tilde{\ff}_{\0};\C ) \cong S(\tilde{\ff }_{\1}^{*} ) = S\left(\ff_{\1}^{*} \right) \cong \C [\partial_{1}^{*}, Y_{2}, \dotsc , Y_{n}].
\]  Furthermore, the inclusion $\tilde{\ff } \hookrightarrow \ff $ defines a restriction map, $\operatorname{res},$ so that the following diagram commutes,
\begin{equation}\label{E:commute}
\begin{CD}
   \HH^{\bullet}(\ff,\ff_{\0};\C )  @>^{\operatorname{res}}>>       \HH^{\bullet}(\tilde{\ff},\tilde{\ff}_{\0};\C )   \\
@V\cong VV                                                           @VV\cong V\\
   \C [\partial_{1}^{*}Y_{2}, \dotsc , \partial_{1}^{*}Y_{n}] @>^{\subseteq}>>     \C [\partial_{1}^{*}, Y_{2}, \dotsc , Y_{n}]  
\end{CD}
\end{equation}

By using the pair $(\fa, \fa_{\diamond})=(\tilde{\ff},\tilde{\ff }_{\0})$ and the setup of Section~\ref{SS:supportvarieties} one can define the support varieties $\V_{(\tilde{\ff},\tilde{\ff}_{\0})}(M,N)$ and $\V_{(\tilde{\ff},\tilde{\ff}_{\0})}(M)$ for any supermodules $M, N \in \mathcal{C}_{(\tilde{\ff },\tilde{\ff }_{\0})}$ for which $\Ext^{\bullet}_{\mathcal{C}_{(\tilde{\ff} , \tilde{\ff }_{\0})}}(M, N)$ and  $\Ext^{\bullet}_{\mathcal{C}_{(\tilde{\ff} , \tilde{\ff }_{\0})}}(M, M)$ are finitely generated $\HH^{\bullet}(\tilde{\ff} ,\tilde{\ff}_{\0};\C)$-modules.  Since $\HH (\tilde{\ff},\tilde{\ff }_{\0};\C )$ is a polynomial ring in $n$ variables, one can naturally identify these support varieties with the conical affine subvarieties of the affine $n$-space
\[
\operatorname{MaxSpec}\left( \HH (\tilde{\ff},\tilde{\ff }_{\0};\C ) \right) = \V_{(\tilde{\ff},\tilde{\ff}_{\0})}(\C ) \cong \mathbb{A}^{n}
\] defined by the ideals $I_{(\tilde{\ff},\tilde{\ff}_{\0})}(M,N)$ and $I_{(\tilde{\ff},\tilde{\ff}_{\0})}(M),$ respectively.

Alternatively, one can describe the $\tilde{\ff}$ support variety using rank varieties.  As a matter of notation, given a homogeneous 
element $x\in \tilde{\ff }$, let $\langle x \rangle$ denote the Lie subsuperalgebra generated by $x$.  If $M \in \mathcal{C}_{(\tilde{\ff},\tilde{\ff }_{\0})}$ is finite dimensional then
define the \emph{rank variety} of 
$M$ to be
$$\V_{\tilde{\ff}}^{\rank}(M)=\left\{x\in \tilde{\ff }_{\1}=\ff_{\1}\mid M \text{ is not projective as a $U\left(\langle x \rangle \right)$-supermodule} \right\}\cup \{0\}.$$

Since by \eqref{E:tildefbrackets} the structure of $\tilde{\ff}$ is of the type considered in \cite[Sections 5, 6]{BKN1}, \cite[Theorem 6.3.2]{BKN1} implies that one has a canonical isomorphism
\begin{equation}\label{E:rankiso}
\V _{\left(\tilde{\ff},\tilde{\ff}_{\0} \right)}(M) \cong \V_{\tilde{\ff}}^{\rank}(M)
\end{equation}
for any finite dimensional $\tilde{\ff}$-supermodule $M$ which is an object of $\mathcal{C}_{(\tilde{\ff},\tilde{\ff}_{\0})}.$ We identify the rank and support varieties of $\tilde{\ff}$ via this isomorphism.  

\subsection{Relating $\tilde{\ff }$ and $\ff$ Support Varieties}\label{SS:relatingfvarieties}  We now wish to relate the support varieties of $\tilde{\ff}$- and $\ff$-supermodules. Note that if $M \in \mathcal{C}_{(\ff ,\ff_{\0})},$ then via restriction it is an object in $\mathcal{C}_{(\tilde{\ff},\tilde{\ff }_{\0})}.$  Therefore, whenever $M$ is finite dimensional one has an induced map of varieties, 
\begin{equation*}
\resstar : \V _{\left(\tilde{\ff},\tilde{\ff}_{\0} \right)}(M) \to \V _{\left(\ff,\ff_{\0} \right)}(M).
\end{equation*}
The present task is to better understand this map.

As a conseqence of \eqref{E:commute} one has that the map 
\[
\resstar : \V _{\left(\tilde{\ff},\tilde{\ff}_{\0} \right)}(\C) \to \V _{\left(\ff,\ff_{\0} \right)}(\C).
\] is given by the canonical quotient map 
\[
 \V _{\left(\tilde{\ff},\tilde{\ff}_{\0} \right)}(\C) \to \V _{\left(\tilde{\ff},\tilde{\ff}_{\0} \right)}(\C)/T. 
\]  That is, for $M \in \mathcal{C}_{(\ff ,\ff_{0})}$ one has 
\begin{equation}\label{E:trivialresequality}
 \V _{\left(\tilde{\ff},\tilde{\ff}_{\0} \right)}(M)/T \cong \resstar \left( \V _{\left(\tilde{\ff},\tilde{\ff}_{\0} \right)}(M)  \right) \subseteq   \V _{\left(\ff,\ff_{\0} \right)}(\C).
\end{equation} In fact one has the following theorem.

\begin{thm}\label{T:generalresequality} Let $M$ be a finite dimensional object in $\mathcal{C}_{(\ff ,\ff_{\0})},$ then 
\begin{equation}\label{E:generalres}
\V _{\left(\tilde{\ff},\tilde{\ff}_{\0} \right)}(M)/T \cong \resstar \left(\V _{\left(\tilde{\ff},\tilde{\ff}_{\0} \right)}(M) \right)  = \V _{\left(\ff,\ff_{\0} \right)}(M).
\end{equation}
\end{thm}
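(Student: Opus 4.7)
The first isomorphism in \eqref{E:generalres} follows from the discussion preceding the theorem: by the commutative diagram \eqref{E:commute}, $\resstar$ is the canonical quotient by the $T$-action, so $\resstar(\V_{(\tilde{\ff},\tilde{\ff}_{\0})}(M)) = \V_{(\tilde{\ff},\tilde{\ff}_{\0})}(M)/T$ automatically. The substantive claim is the equality
\[
\V_{(\tilde{\ff},\tilde{\ff}_{\0})}(M)/T = \V_{(\ff,\ff_{\0})}(M),
\]
which I would prove by two inclusions. The key structural input is that, with $G := \ff_{\0}/\tilde{\ff}_{\0}$ a one-dimensional (hence reductive) torus and $\tilde{\ff}_{\0}$ acting trivially on $\tilde{\ff}_{\1} = \ff_{\1}$, the Chevalley--Eilenberg cochain complex for $(\ff,\ff_{\0})$ with coefficients in any $\ff$-supermodule $V$ is exactly the $G$-invariant subcomplex of the one for $(\tilde{\ff},\tilde{\ff}_{\0})$. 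Exactness of $G$-invariants then gives $\HH^\bullet(\ff,\ff_{\0};V) \cong \HH^\bullet(\tilde{\ff},\tilde{\ff}_{\0};V)^G$ with $\resstar$ realized as the inclusion of invariants. Applied to $V = \Hom_{\C}(M,M)$ and combined with Proposition~\ref{relativecoho}, this identifies $\Ext^\bullet_{\mathcal{C}_{(\ff,\ff_{\0})}}(M,M)$ with $\Ext^\bullet_{\mathcal{C}_{(\tilde{\ff},\tilde{\ff}_{\0})}}(M,M)^G$ as $R^\ff$-modules and $R^\ff$ with $(R^{\tilde{\ff}})^G$.

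The inclusion $\V_{(\ff,\ff_{\0})}(M) \subseteq \V_{(\tilde{\ff},\tilde{\ff}_{\0})}(M)/T$ is then formal: any $g \in I_{(\tilde{\ff},\tilde{\ff}_{\0})}(M) \cap R^\ff$ annihilates $\Ext^\bullet_{\mathcal{C}_{(\tilde{\ff},\tilde{\ff}_{\0})}}(M,M)$ and hence its $R^\ff$-submodule of $G$-invariants, giving $I_{(\tilde{\ff},\tilde{\ff}_{\0})}(M) \cap R^\ff \subseteq I_{(\ff,\ff_{\0})}(M)$ and the corresponding inclusion of zero loci. The reverse inclusion is the main content and amounts to the radical containment $I_{(\ff,\ff_{\0})}(M) \subseteq \sqrt{I_{(\tilde{\ff},\tilde{\ff}_{\0})}(M) \cap R^\ff}$, equivalently that any $f \in R^\ff$ annihilating $N^G := \Ext^\bullet_{\mathcal{C}_{(\ff,\ff_{\0})}}(M,M)$ has a power annihilating $N := \Ext^\bullet_{\mathcal{C}_{(\tilde{\ff},\tilde{\ff}_{\0})}}(M,M)$. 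I would derive this from the following fact in geometric invariant theory: if $G$ is a reductive group acting on a finitely generated commutative $\C$-algebra $R$ and $N$ is a $G$-equivariant finitely generated $R$-module, then
\[
\sqrt{\Ann_{R^G}(N^G)} = \sqrt{\Ann_R(N) \cap R^G},
\]
so the image of the support of $N$ in $\MaxSpec(R^G)$ under the canonical quotient map coincides with the support of $N^G$. Applied with $R=R^{\tilde{\ff}}$ and $N = \Ext^\bullet_{\mathcal{C}_{(\tilde{\ff},\tilde{\ff}_{\0})}}(M,M)$ (finitely generated over $R^{\tilde{\ff}}$ by \cite[Theorem~2.5.3]{BKN1}), this yields exactly the needed containment.

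Verifying this invariant-theoretic lemma is the main obstacle, because the extension $R^\ff \hookrightarrow R^{\tilde{\ff}}$ is not finite (their transcendence degrees differ by one), so one cannot appeal to going-down for finite extensions. What makes the argument go through is the explicit weight structure: $G \cong \C^\times$ acts on $R^{\tilde{\ff}}=\C[\partial_1^\ast, Y_2,\ldots,Y_n]$ with $\partial_1^\ast$ of weight $+1$ and each $Y_i$ of weight $-1$, so every nontrivial weight space of $R^{\tilde{\ff}}$ is nonzero and contains polynomial non-zero-divisors. Decomposing $N = \bigoplus_\mu N_\mu$ into $G$-weight spaces and choosing homogeneous $R$-module generators $n_i \in N_{\mu_i}$, for each $i$ one produces $b_i \in R_{-\mu_i}$ so that $b_i n_i \in N_0 = N^G$; then $f \in R^\ff$ annihilating $N^G$ forces $b_i (f \cdot n_i) = f \cdot (b_i n_i) = 0$, and iterating this argument together with finite generation yields $f^k N = 0$ for a uniform $k$, completing the proof.
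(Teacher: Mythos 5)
Your first two ingredients are the same as the paper's: the identification of $\HH^\bullet(\ff,\ff_{\0};V)$ with $\HH^\bullet(\tilde{\ff},\tilde{\ff}_{\0};V)^T$ via $\operatorname{res}$ (using that $T$ acts semisimply on cochains and commutes with the differential), and the easy inclusion $I_{(\tilde{\ff},\tilde{\ff}_{\0})}(M)\cap R^\ff \subseteq I_{(\ff,\ff_{\0})}(M)$. The problem is the ``hard direction.'' Your general GIT lemma, that $\sqrt{\Ann_{R^G}(N^G)}=\sqrt{\Ann_R(N)\cap R^G}$ for any reductive $G$ and $G$-equivariant finitely generated $R$-module $N$, is false: take $G=\C^\times$, $R=\C[x]$ with $x$ in weight $1$, and $N=\C$ placed in weight $1$. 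Then $N^G=0$, so $\Ann_{R^G}(N^G)=R^G=\C$, while $\Ann_R(N)\cap R^G=(x)\cap\C=0$. Your weight-space argument also has a concrete gap independent of the lemma's truth: from $b_i(f\cdot n_i)=0$ you cannot conclude anything about $f\cdot n_i$ unless $b_i$ is a non-zero-divisor on $N$, which you have not established and which will generally fail (indeed $b_i$ kills $N$ in the counterexample above).

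What you are missing is a much simpler observation, which is what the paper uses and which bypasses radicals entirely: for any of the pairs $(\fa,\fa_{\0})$ the ideal $I_{(\fa,\fa_{\0})}(M)$ is exactly the annihilator of the degree-zero element $1_{\fa,M}\in\Ext^0_{\mathcal{C}_{(\fa,\fa_{\0})}}(M,M)$, because $\Ext^\bullet(M,M)$ is a ring with unit $1_{\fa,M}$ and $R^\fa$ acts through a ring homomorphism into it. Since $\operatorname{res}(1_{\ff,M})=1_{\tilde{\ff},M}$ and $\operatorname{res}$ is multiplicative, one gets $\operatorname{res}(x\cdot 1_{\ff,M})=\operatorname{res}(x)\cdot 1_{\tilde{\ff},M}$; combined with the \emph{injectivity} of $\operatorname{res}\colon\HH^\bullet(\ff,\ff_{\0};\Hom_\C(M,M))\hookrightarrow\HH^\bullet(\tilde{\ff},\tilde{\ff}_{\0};\Hom_\C(M,M))$ (which you already established), this gives $x\cdot 1_{\ff,M}=0$ if and only if $\operatorname{res}(x)\cdot 1_{\tilde{\ff},M}=0$, i.e.\ $\operatorname{res}^{-1}\bigl(I_{(\tilde{\ff},\tilde{\ff}_{\0})}(M)\bigr)=I_{(\ff,\ff_{\0})}(M)$ as ideals, not merely up to radical. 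This is the theorem. (As a sanity check on your own framework: $1_{\tilde{\ff},M}\in N^T$ is a unit of the ring $N$, so any $f\in R^\ff$ killing $N^T$ already kills $N$ outright; the needed statement is trivially true in the relevant setting, but only because of this unit element, not because of your general lemma.)
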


\begin{proof} The first isomorphism is \eqref{E:trivialresequality}. It remains to show that the map $\resstar$ is surjective.

To do so first requires a better understanding of the relationship between $\tilde{\ff}$ and $\ff$ cohomology with coefficents in a finite dimensional $\ff$-supermodule $U$.  Recall the definition of the cochains for relative cohomology in Section~\ref{SS:relcohom} and that $\tilde{\ff}_{\1}=\ff_{\1}$.  If $U$ is a finite dimensional supermodule in $\mathcal{C}_{(\ff ,\ff_{\0})},$ then the torus $T$ acts on the cochains $C^{\bullet}(\tilde{\ff},\tilde{\ff}_{\0};U) = \Hom_{\tilde{\ff }_{\0}}(\Lambda^{\bullet}_{s}(\ff_{\1}), U)$ by $(t.\varphi)(x)=t\varphi(t^{-1}x)$ for all $t \in T,$ $\varphi \in C^{\bullet}(\tilde{\ff},\tilde{\ff}_{\0};U),$ and $x \in \Lambda_{s}^{\bullet}(\ff_{\1}).$  If $N$ is a $T$-module and $\lambda \in X(T)$ is a weight, then write $N_{\lambda}$ for the $\lambda$ weight space of $N$.  Since $T$ acts semisimply on $C^{\bullet}(\tilde{\ff},\tilde{\ff}_{\0};U)$ one has
\[
C^{\bullet}(\tilde{\ff},\tilde{\ff}_{\0};U) = C^{\bullet}(\tilde{\ff},\tilde{\ff}_{\0};U)^{T} \oplus \bigoplus_{\substack{\lambda \in X(T) \\ \lambda \neq 0}}  C^{\bullet}(\tilde{\ff},\tilde{\ff}_{\0};U)_{\lambda} 
\] as $T$-modules.  Observe that the action of $T$ commutes with the differential in the definition of relative cohomology.  Thus one has 
\begin{align*}
\HH (\tilde{\ff},\tilde{\ff}_{\0}; U) &= \HH (\tilde{\ff},\tilde{\ff}_{\0}; U)^{T} \oplus  \bigoplus_{\substack{\lambda \in X(T) \\ \lambda \neq 0}} \HH (\tilde{\ff},\tilde{\ff}_{\0}; U)_{\lambda}  \\
 & \cong \HH (\ff,\ff_{\0}; U) \oplus  \bigoplus_{\substack{\lambda \in X(T) \\ \lambda \neq 0}} \HH (\tilde{\ff},\tilde{\ff}_{\0}; U)_{\lambda} , \\
\end{align*} where the isomorphism follows from the equality  $C^{\bullet}(\tilde{\ff},\tilde{\ff}_{\0};U)^{T}=C^{\bullet}(\ff,\ff_{\0};U)$ and the exactness of taking $T$ invariants.  In particular, one has 
\begin{equation}\label{E:embedgeneralcohom}
\operatorname{res} : \HH (\ff,\ff_{\0}; U) \xrightarrow{\cong} \HH (\tilde{\ff},\tilde{\ff}_{\0}; U)^{T} \subseteq \HH (\tilde{\ff},\tilde{\ff}_{\0}; U).
\end{equation}

We are now prepared to prove the theorem.  Let $(\fa ,\fa_{\0})$ denote either $(\ff ,\ff_{\0})$ or $(\tilde{\ff},\tilde{\ff}_{\0}).$  Note that, just as for finite groups, an equivalent characterization of $I_{(\fa ,\fa_{\0})}(M)$ is the ideal of elements in $\HH^{\bullet}(\fa ,\fa_{\0};\C )$ which annihilate the element $1_{\fa, M} \in \Ext^{0}_{\mathcal{C}_{(\fa ,\fa_{\0})}}(M,M)$ corresponding to the identity morphism.  Note, too, that $\operatorname{res}(1_{\ff ,M})=1_{\tilde{\ff},M}$ for any $\ff$-supermodule $M$ and that $\operatorname{res}(x.z)=\operatorname{res}(x).\operatorname{res}(z)$ for any $x \in \HH^{\bullet}(\ff ,\ff_{\0};\C )$ and $z \in \Ext^{\bullet}_{\mathcal{C}_{(\ff ,\ff_{\0})}}(M,M)$.

Since the ideal $\operatorname{res}^{-1}\left( I_{(\tilde{\ff } ,\tilde{\ff}_{\0})}(M) \right)$ defines the variety $\resstar \left( V_{(\tilde{\ff } ,\tilde{\ff}_{\0})}(M)\right),$  it suffices to prove 
\[
\operatorname{res}^{-1}\left(  I_{(\tilde{\ff } ,\tilde{\ff}_{\0})}(M)\right) =  I_{(\ff  ,\ff_{\0})}(M).
\]

Let $x \in I_{(\ff  ,\ff_{\0})}(M).$  That is, $x.1_{\ff, M}=0$ and so 
\[
0=\operatorname{res}(x.1_{\ff, M})= \operatorname{res}(x).\operatorname{res}(1_{\ff ,M})=\operatorname{res}(x).1_{\tilde{\ff},M}.
\]  That is, $\operatorname{res}(x) \in I_{(\tilde{\ff } ,\tilde{\ff}_{\0})}(M) $ and so $ x \in \operatorname{res}^{-1}\left(  I_{(\tilde{\ff } ,\tilde{\ff}_{\0})}(M)\right).$

Conversely, let $x \in \operatorname{res}^{-1}\left(  I_{(\tilde{\ff } ,\tilde{\ff}_{\0})}(M)\right).$ Then 
\[
0=\operatorname{res}(x).1_{\tilde{\ff},M}=\operatorname{res}(x).\operatorname{res}(1_{\ff,M})=\operatorname{res}\left(x.1_{\ff,M} \right).
\]  However by \eqref{E:embedgeneralcohom} (applied to the case $U=M^{*}\otimes M$) one has that $\operatorname{res}$ is injective and so $0=x.1_{\ff ,M}.$  That is, $x \in  I_{(\ff  ,\ff_{\0})}(M). $  This proves \eqref{E:generalres}.
\end{proof}


\subsection{Properties of $\ff$ Support Varieties}\label{SS:propsoffvareties}
 We record some basic properties of support varieties for $\ff$-supermodules which follow from the rank variety description of $\tilde{\ff}$ support varieties and the isomophism given in Theorem~\ref{T:generalresequality}.  The situation is reminiscent of the connection between support varieties for the Frobenius kernels $G_{r}$ and $G_{r}T$ considered in \cite{nakano}. Other properties of rank varieties can be found in \cite[Theorem 6.4.2]{BKN1}.

\begin{thm}\label{T:fvarietyprops}Let $M,N,M_{1},M_{2}$ and $M_{3}$ be finite dimensional $\ff$-supermodules in $\mathcal{C}_{(\ff, \ff_{\0})}$. Then,
\begin{itemize}
\item [(a)] $\V_{(\ff,\ff_{\0})}(M\otimes N)=\V_{(\ff,\ff_{\0} )}(M)\cap \V_{(\ff,\ff_{\0} )}(N)$;
\item [(b)] $\V_{(\ff ,\ff_{\0})}(M^\ast)=\V_{(\ff ,\ff_{\0})}(M);$
\item [(c)] $\V_{(\ff ,\ff_{\0})}(M^\ast\otimes M)=\V_{(\ff ,\ff_{\0})}(M);$
\item [(d)] If 
\[
0 \to M_{1} \to M_{2} \to M_{3} \to 0
\] is a short exact sequence, then 
\[
\V_{(\ff ,\ff_{\0})}(M_{i}) \subseteq \V_{(\ff ,\ff_{\0})}(M_{j}) \cup \V_{(\ff ,\ff_{\0})}(M_{k}),
\] where $\{i,j,k \}=\{1,2,3 \}.$
\end{itemize}
\end{thm}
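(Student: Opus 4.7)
The plan is to reduce each of (a)--(d) to the corresponding property for the rank variety $\V_{\tilde{\ff}}^{\rank}(-)$ on the auxiliary Lie subsuperalgebra $\tilde{\ff}$, and then descend through the quotient isomorphism of Theorem~\ref{T:generalresequality}. Since $\tilde{\ff}$ satisfies the trivial bracket relations \eqref{E:tildefbrackets}, it sits inside the framework of \cite[Sections 5, 6]{BKN1}, and so \cite[Theorem 6.4.2]{BKN1} already provides analogues of (a)--(d) for $\V_{(\tilde{\ff},\tilde{\ff}_{\0})}(-)$. The task therefore reduces to verifying that these identities of $\tilde{\ff}$-support varieties survive passage to the $T$-quotient
\[
q:\V_{(\tilde{\ff},\tilde{\ff}_{\0})}(\C) \longrightarrow \V_{(\tilde{\ff},\tilde{\ff}_{\0})}(\C)/T \cong \V_{(\ff,\ff_{\0})}(\C).
\]

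The key observation is that for any finite dimensional $M \in \mathcal{C}_{(\ff,\ff_{\0})}$ the variety $\V_{(\tilde{\ff},\tilde{\ff}_{\0})}(M)$ is $T$-stable. Indeed, $T$ acts on the cochain complex computing $\HH^{\bullet}(\tilde{\ff},\tilde{\ff}_{\0};\Hom_{\C}(M,M))$ and this action commutes with the differential (exactly as exploited in the proof of Theorem~\ref{T:generalresequality}), so the defining annihilator ideal $I_{(\tilde{\ff},\tilde{\ff}_{\0})}(M)$ is $T$-stable. For $T$-stable closed subvarieties $V,W \subseteq \V_{(\tilde{\ff},\tilde{\ff}_{\0})}(\C)$, a direct check yields
\[
q(V\cap W) = q(V)\cap q(W), \qquad q(V\cup W) = q(V)\cup q(W);
\]
only the containments $\supseteq$ require the $T$-stability.

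With this machinery in hand I would dispatch the four parts in sequence. For (a), the $\tilde{\ff}$-identity $\V_{(\tilde{\ff},\tilde{\ff}_{\0})}(M\otimes N) = \V_{(\tilde{\ff},\tilde{\ff}_{\0})}(M)\cap \V_{(\tilde{\ff},\tilde{\ff}_{\0})}(N)$ gives (a) after applying $q$. For (b), the rank-variety identity $\V_{\tilde{\ff}}^{\rank}(M^{*}) = \V_{\tilde{\ff}}^{\rank}(M)$ holds because for a single odd element $x$ the algebra $U(\langle x\rangle)$ is a two-dimensional Frobenius Hopf superalgebra, under which projectivity is preserved by the dual; again, applying $q$ concludes. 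Part (c) is a formal consequence of (a) and (b) once one notes that $\V_{(\ff,\ff_{\0})}(M) \subseteq \V_{(\ff,\ff_{\0})}(M)$. For (d), the rank-variety two-out-of-three inclusion in \cite[Theorem 6.4.2]{BKN1} applies to the short exact sequence viewed as a sequence of $\tilde{\ff}$-supermodules, and $q$ sends unions to unions.

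The main point requiring honest care is the verification that $q$ respects intersections and unions on $T$-stable subvarieties; equivalently, no spurious $T$-orbits arise in $q(V)\cap q(W)$ beyond the image of $V\cap W$. This is precisely where the $T$-stability of the support varieties is indispensable, and it is also what forces one to pass through $\tilde{\ff}$ rather than trying to work directly with $\ff$. Every other ingredient is either already recorded in \cite{BKN1} or an immediate consequence of the theorems proved earlier in this paper.
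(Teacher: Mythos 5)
Your proposal is correct and follows the same overall strategy as the paper: pass to $\tilde{\ff}$, invoke the rank variety properties of \cite[Proposition 6.3.1, Theorem 6.4.2]{BKN1}, and descend through the $T$-quotient furnished by Theorem~\ref{T:generalresequality}. The difference is in how the descent is executed. You work geometrically, observing that $\V_{(\tilde{\ff},\tilde{\ff}_{\0})}(M)$ is a $T$-stable closed subvariety and that the GIT quotient map $q$ by the reductive group $T$ satisfies $q(V\cap W)=q(V)\cap q(W)$ and $q(V\cup W)=q(V)\cup q(W)$ on such subvarieties; the nontrivial containment uses that $q(v)=q(w)$ forces $\overline{T.v}\cap\overline{T.w}\neq\emptyset$, and closedness plus $T$-stability keeps the witness inside $V\cap W$. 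The paper argues dually at the level of ideals: it first proves the algebraic lemma $(I+J)^{\Gamma}=I^{\Gamma}+J^{\Gamma}$ for graded ideals under a semisimple $\Gamma$-action, then compares radicals $\sqrt{I_{(\tilde{\ff},\tilde{\ff}_{\0})}(-)^{T}}$ termwise (with the analogous $\left(\sqrt{IJ}\right)^{T}=\sqrt{I^{T}J^{T}}$ used for part (d)). These are two presentations of the same descent; the geometric version is perhaps more transparent, while the paper's ideal-theoretic lemma is self-contained and slots directly into the $\operatorname{MaxSpec}$ formalism in which support varieties are defined. Two small things worth tightening: your remark that ``only the containments $\supseteq$ require the $T$-stability'' is misleading for unions, where no stability is needed at all; and the justification for part (c) has a typo---what you want is simply $\V_{(\ff,\ff_{\0})}(M^{\ast}\otimes M)=\V_{(\ff,\ff_{\0})}(M^{\ast})\cap\V_{(\ff,\ff_{\0})}(M)=\V_{(\ff,\ff_{\0})}(M)$ by (a) and (b), which is exactly what the paper does.
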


\begin{proof} We first make the following observations.  Consider a finitely generated, commutative, graded algebra $S$ where each graded summand is finite dimensional.  Furthermore, assume some group, $\Gamma,$ acts semisimply on $S$ and the action respects the grading on $S.$  Let $\iota: S^{\Gamma} \hookrightarrow S$ be the canonical embedding.  If $J$ is a graded ideal of $S,$ then the ideal $\iota^{-1}\left(J \right) = J^{\Gamma}.$  In addition, if $I,J$ are both graded ideals of $S,$ then one has 
\begin{equation}\label{E:idealinvariants}
\left(I + J \right)^{\Gamma} = I^{\Gamma} + J^{\Gamma}.
\end{equation}  Namely, one first notes that one has the inclusion $I^{\Gamma} + J^{\Gamma} \subseteq \left(I + J \right)^{\Gamma}.$  However, as graded $\Gamma$-modules one has 
\[
\left(I + J \right)/\left(I \cap J \right) \cong I \oplus J.
\] By using the fact that taking fixed points under $\Gamma$ is 
exact (because the action of $\Gamma$ is semisimple) one has  
\[
\left(I + J \right)^{\Gamma}/\left(I \cap J \right)^{\Gamma} \cong \left(I \oplus J \right)^{\Gamma}.
\]  However, $\left(I \cap J \right)^{\Gamma}= I^{\Gamma} \cap J^{\Gamma}$ and $\left(I \oplus J \right)^{\Gamma} = I^{\Gamma} \oplus J^{\Gamma}.$  Thus one has 
\begin{equation}\label{E:isoA}
\left(I + J \right)^{\Gamma}/\left(I^{\Gamma} \cap J^{\Gamma} \right) \cong \left(I^{\Gamma} \oplus J^{\Gamma} \right).
\end{equation}
On the other hand, considering $I^{\Gamma}$ and $J^{\Gamma}$ as $\Gamma$-modules one has 
\begin{equation}\label{E:isoB}
\left(I^{\Gamma}+ J^{\Gamma} \right)/\left(I^{\Gamma} \cap J^{\Gamma} \right) \cong I^{\Gamma} \oplus J^{\Gamma}.
\end{equation}  Using \eqref{E:isoA} and \eqref{E:isoB} to compare dimensions of the graded summands of \eqref{E:idealinvariants}, one sees that the earlier inclusion must, in fact, be an equality.

To prove part (a), one first notes that as a consequence of the rank variety description on has by \cite[Proposition 6.3.1]{BKN1} that 
\[
 \V_{(\tilde{\ff },\tilde{\ff }_{\0})}(M\otimes N)=\V_{(\tilde{\ff },\tilde{\ff}_{\0} )}(M)\cap \V_{(\tilde{\ff},\tilde{\ff}_{\0} )}(N).
\]  As a matter of notation, if $J$ is an ideal, let $\sqrt{J}$ denote its radical ideal.  Then the above equality translates into the equality 
\[
\sqrt{I_{(\tilde{\ff},\tilde{\ff}_{\0})}(M\otimes N)} =\sqrt{I_{(\tilde{\ff},\tilde{\ff}_{\0})}(M) + I_{(\tilde{\ff},\tilde{\ff}_{\0})}(N)}.
\] Taking invariants with respect to $T$ and applying  \eqref{E:idealinvariants}, one obtains 
\begin{align*}
\sqrt{I_{(\ff,\ff_{\0 })}(M\otimes N)} &=\sqrt{I_{(\tilde{\ff},\tilde{\ff}_{\0 })}(M\otimes N)^{T}} \\
&= \left(\sqrt{I_{(\tilde{\ff},\tilde{\ff}_{\0 })}(M\otimes N)} \right)^{T} \\
& =\left( \sqrt{I_{(\tilde{\ff},\tilde{\ff}_{\0 })}(M) + I_{(\tilde{\ff},\tilde{\ff}_{\0 })}(N)}\right)^{T}\\
& =\sqrt{I_{(\tilde{\ff},\tilde{\ff}_{\0 })}(M)^{T} + I_{(\tilde{\ff},\tilde{\ff}_{\0})}(N)^{T}}\\
& =\sqrt{I_{(\ff,\ff_{\0 })}(M) + I_{(\ff,\ff_{\0})}(N)}.
\end{align*}  This proves the desired equality of varieties.

Part (b) is proven by a similar but easier argument and part (c) follows from parts (a) and (b).  

Finally, to prove part (d) one observes that the rank variety description implies (cf.  \cite[Theorem 6.4.2(d)]{BKN1}) that one has 
\[
\V_{(\tilde{\ff} ,\tilde{\ff}_{\0})}(M_{i}) \subseteq \V_{(\tilde{\ff} ,\tilde{\ff }_{\0})}(M_{j}) \cup \V_{(\tilde{\ff} ,\tilde{\ff }_{\0})}(M_{k}),
\]  where $\{i,j,k \}=\{1,2,3 \}.$  One then argues as in part (a) using instead that 
\[
\left( \sqrt{I_{(\tilde{\ff},\tilde{\ff}_{\0})}(M_{j})I_{(\tilde{\ff},\tilde{\ff}_{\0})}(M_{k})}\right)^{T}= \sqrt{I_{(\tilde{\ff},\tilde{\ff}_{\0})}(M_{j})^{T}I_{(\tilde{\ff},\tilde{\ff}_{\0})}(M_{k})^{T}}=\sqrt{I_{(\ff,\ff_{\0})}(M_{j})I_{(\ff,\ff_{\0})}(M_{k})}.
\]
\end{proof}

Another important property of support varieties is their ability to detect projectivity.  This is illustrated by the following theorem.

\begin{thm}\label{T:detectingprojectives}  Let $M$ be a finite dimensional supermodule in $\mathcal{C}_{(\ff ,\ff_{\0})}.$ Then the following are equivalent:
\begin{enumerate}
\item [(a)] The supermodule $M$ is projective in $\mathcal{C}_{(\ff ,\ff_{\0})};$
\item [(b)] The supermodule $M$ is projective in $\mathcal{C}_{(\tilde{\ff} ,\tilde{\ff }_{\0})};$
\item [(c)] The variety $\V_{(\tilde{\ff } ,\tilde{\ff}_{\0})}(M) = \{0 \}.$

\end{enumerate}
\end{thm}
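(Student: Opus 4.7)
The strategy is to establish (a) $\Leftrightarrow$ (b) and (b) $\Leftrightarrow$ (c) separately. For (b) $\Leftrightarrow$ (c), I would combine the rank variety identification $\V_{(\tilde{\ff},\tilde{\ff}_{\0})}(M) \cong \V_{\tilde{\ff}}^{\rank}(M)$ of \eqref{E:rankiso} with a Dade-style detection of projectivity. Since $\tilde{\ff}$ satisfies $[\tilde{\ff}_{\0}, \tilde{\ff}_{\1}]=0$ by \eqref{E:tildefbrackets}, it falls within the framework of \cite[Sections~5,~6]{BKN1}, and the vanishing of $\V_{\tilde{\ff}}^{\rank}(M)$ is equivalent to $M$ being projective in $\mathcal{C}_{(\tilde{\ff}, \tilde{\ff}_{\0})}$ by the analogue of \cite[Theorem~6.4.2]{BKN1}.

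For (a) $\Rightarrow$ (b), I would use the Poincar\'e--Birkhoff--Witt theorem. If $M$ is projective in $\mathcal{C}_{(\ff, \ff_{\0})}$, then it is a direct summand of a relatively free supermodule $U(\ff) \otimes_{U(\ff_{\0})} L$ for some $L \in \mathcal{C}_{(\ff, \ff_{\0})}$. Since $\ff_{\1} = \tilde{\ff}_{\1}$, PBW identifies both $U(\ff) \otimes_{U(\ff_{\0})} L$ and $U(\tilde{\ff}) \otimes_{U(\tilde{\ff}_{\0})} L$ with $\Lambda(\ff_{\1}) \otimes L$ as vector spaces, and a direct computation using $[\tilde{\ff}_{\0}, \ff_{\1}] \subseteq \ff_{\1}$ shows this identification is $\tilde{\ff}$-equivariant. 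Since $L$ remains finitely semisimple upon restriction to $\tilde{\ff}_{\0} \subset \ff_{\0}$, it follows that $M$ is a direct summand of a relatively free $\tilde{\ff}$-supermodule, establishing (b).

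For (b) $\Rightarrow$ (a), I would apply the cohomological criterion for projectivity. By Proposition~\ref{relativecoho}(b) and the isomorphism $\HH^{\bullet}(\ff, \ff_{\0}; U) \cong \HH^{\bullet}(\tilde{\ff}, \tilde{\ff}_{\0}; U)^{T}$ from the proof of Theorem~\ref{T:generalresequality} (valid for $U \in \mathcal{C}_{(\ff, \ff_{\0})}$), for any $N \in \mathcal{C}_{(\ff, \ff_{\0})}$ and $U = \Hom_{\C}(M, N)$ one obtains a natural inclusion
$$\Ext_{\mathcal{C}_{(\ff, \ff_{\0})}}^{i}(M, N) \hookrightarrow \Ext_{\mathcal{C}_{(\tilde{\ff}, \tilde{\ff}_{\0})}}^{i}(M, N).$$
Under (b) the right-hand side vanishes in positive degrees, so the same holds for the left-hand side, giving (a). The main obstacle is verifying the $\tilde{\ff}$-equivariance of the PBW identification in the second step; this reduces to checking that the bracket on $\tilde{\ff}$ is the restriction of the bracket on $\ff$, which is immediate from $\tilde{\ff}$ being a Lie subsuperalgebra of $\ff$.
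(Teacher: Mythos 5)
Your proof is correct, and the overall structure matches the paper: (b)$\Leftrightarrow$(c) via the rank variety description, and (a)$\Leftrightarrow$(b) proved separately. The differences are in the two directions of (a)$\Leftrightarrow$(b). For (a)$\Rightarrow$(b) the paper simply asserts that a projective $\ff$-supermodule restricts to a projective $\tilde{\ff}$-supermodule; your PBW-based comparison of $U(\ff)\otimes_{U(\ff_{\0})}L$ with $U(\tilde{\ff})\otimes_{U(\tilde{\ff}_{\0})}L$ is a more careful justification of this fact, and it is correct --- the key brackets one needs are $[\tilde{\ff}_{\0},\ff_{\1}]=0$ (from \eqref{E:tildefbrackets}) and $[\ff_{\1},\ff_{\1}]\subseteq\tilde{\ff}_{\0}$, which ensure that commuting odd elements past the $\Lambda_s(\ff_{\1})$ factor produces only $\tilde{\ff}_{\0}$-terms in either construction. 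Your statement of the needed input, ``$[\tilde{\ff}_{\0},\ff_{\1}]\subseteq\ff_{\1}$,'' is slightly misplaced; what one actually uses are the two inclusions above. For (b)$\Rightarrow$(a), the paper runs the Lyndon--Hochschild--Serre spectral sequence for the ideal $\tilde{\ff}\subseteq\ff$ (with quotient a one-dimensional torus) and observes that the surviving column vanishes. You instead apply the isomorphism $\HH^{\bullet}(\ff,\ff_{\0};U)\cong\HH^{\bullet}(\tilde{\ff},\tilde{\ff}_{\0};U)^{T}$ of \eqref{E:embedgeneralcohom} directly to $U=\Hom_\C(M,N)$ to get an injection on Ext groups. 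This is a genuinely different and cleaner route: it reuses the $T$-invariants computation already established in the proof of Theorem~\ref{T:generalresequality} and avoids redoing a spectral-sequence collapse argument. Both rely on essentially the same structural facts about the inclusion $\tilde{\ff}\subseteq\ff$, but your version gets there in one step.
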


\begin{proof}  If $M$ is a projective $\ff$-supermodule, then it remains so upon restriction to $\tilde{\ff},$ hence one has that (a) implies $(b).$

To prove (b) implies (a) it suffices to show 
\[
\Ext^{i}_{\mathcal{C}_{(\ff,\ff_{\0})}}(M,L)\cong \HH^{i}(\ff ,\ff_{\0 }; M^{*}\otimes L)=0
\]
for all objects $L$ in $\mathcal{C}_{(\ff ,\ff_{\0})}$ and $i >0.$  Since $\tilde{\ff}$ is an ideal in $\ff$ one can consider the Lyndon-Hochschild-Serre spectral sequence for the pairs $(\tilde{\ff},\tilde{\ff}_{\0}) \subseteq (\ff,\ff_{\0})$:
\[
E_{2}^{i,j}=\HH^{i}\left( \ff /\tilde{\ff},\ff_{\0}/\tilde{\ff}_{\0}; \HH^{j}(\tilde{\ff },\tilde{\ff}_{\0};M^{*}\otimes L)\right)\Rightarrow   
\HH^{i+j}(\ff,\ff_{\0};M^{*}\otimes L).
\] 
By assumption $M$ is a projective object in $\mathcal{C}_{(\tilde{\ff},\tilde{\ff}_{\0})}$ and so $\HH^{j}(\tilde{\ff },\tilde{\ff}_{\0};M^{*}\otimes L)=0$ for $j > 0$ and the spectral sequence collapses.  That is, for $i \geq 0$ one has 
\[
\HH^{i}\left( \ff /\tilde{\ff},\ff_{\0}/\tilde{\ff}_{\0};(M^{*}\otimes L)^{\tilde{\ff}}\right) \cong   
\HH^{i}(\ff,\ff_{\0}; M^{*}\otimes L).
\]  Since the objects of $\mathcal{C}_{(\ff,\ff_{\0})}$ are finitely semisimple as $\ff_{\0}$-supermodules and since $\ff /\tilde{\ff} = \ff_{\0}/\tilde{\ff}_{\0}$ is a one dimensional subtorus of $\ff_{\0}$ one has $\HH^{i}(\ff /\tilde{\ff},\ff_{\0}/\tilde{\ff}_{\0}; (M^{*} \otimes L)^{\tilde{\ff}}) = 0$ for $i > 0.$  It follows that $\HH^{i}(\ff,\ff_{\0}; M^{*}\otimes L)=0$ and so $M$ is projective in $\mathcal{C}_{(\ff ,\ff_{\0})}$.

The equivalence of (b) and (c) follows from \cite[Theorem 6.4.2(b)]{BKN1}.
\end{proof}

Note that it is \emph{not} true that if $\V_{(\ff ,\ff_{\0})}(M)=\{0 \},$ then $M$ is projective as a $\ff$-supermodule.  One can find examples of $\ff$-supermodules, $M,$ so that $\V_{(\tilde{\ff},\tilde{\ff}_{\0})}(M) \neq \{0 \},$ but by \eqref{E:generalres} 
$$  \V_{(\ff, \ff_{\0})}(M) \cong \V_{(\tilde{\ff}, \tilde{\ff}_{\0})}(M)/T=\{0 \}.$$  
On the other hand, by the previous theorem $M$ is not projective as an $\ff$-supermodule since $\V_{(\tilde{\ff},\tilde{\ff}_{\0})}(M) \neq \{0 \}.$


\section{Calculation of Support Varieties} 

\subsection{Calculation of Support Varieties for Kac Supermodules}\label{SS:Kacsupports}  Recall that for $\lambda\in X_{0}^{+}$ we constructed the Kac supermodule $K(\lambda)$. The following 
result shows that the $\fg$ and $\ff$ support varieties are zero for all Kac supermodules.

\begin{prop} \label{P:kacsupport} Let $\la \in X_{0}^{+}$ and $N$ be a finite dimensional supermodule in ${\mathcal C}_{(\fg,
\fg_{0})}$. Then,
\begin{itemize}
\item [(a)] $\V_{(\La, \La_0)}(K(\la),N)=\{0\};$
\item [(b)] $\V_{(\La, \La_0)}(K(\la))=\{0\};$
\item [(c)]  $\V_{(\ff, \ff_{\0})}(K(\la))=\{0\}.$
\end{itemize}
\end{prop}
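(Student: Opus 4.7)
The plan is to establish (a) via a Frobenius reciprocity argument, obtain (b) as the special case $N=K(\la),$ and then deduce (c) from (b) using \eqref{E:resstarimage}.

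For (a), I would exploit the fact that $K(\la)=U(\fg)\otimes_{U(\fg_{0}\oplus \fg^{+})}L_{0}(\la)$ is induced from the parabolic subalgebra $\fg_{0}\oplus \fg^{+}.$ Since $U(\fg)$ is free as a right $U(\fg_{0}\oplus \fg^{+})$-module by the PBW theorem, Frobenius reciprocity together with Proposition~\ref{relativecoho} identifies
\[
\Ext^{\bullet}_{\mathcal{C}_{(\fg,\fg_{0})}}(K(\la),N)\cong \HH^{\bullet}(\fg_{0}\oplus \fg^{+},\fg_{0};\Hom_{\C}(L_{0}(\la),N)).
\]
Under this isomorphism the Yoneda action of $R=\HH^{\bullet}(\fg,\fg_{0};\C)$ factors through the restriction map $\operatorname{res}: R \to \HH^{\bullet}(\fg_{0}\oplus \fg^{+},\fg_{0};\C)$ induced by $\fg_{0}\oplus \fg^{+}\hookrightarrow \fg.$ It therefore suffices to show that $\operatorname{res}$ kills the augmentation ideal $R^{+}$: then $R^{+}\subseteq I_{(\fg,\fg_{0})}(K(\la),N)$ and consequently $\V_{(\fg,\fg_{0})}(K(\la),N)=\MaxSpec(R/R^{+})=\{0\}.$

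To verify $\operatorname{res}(R^{+})=0,$ I would appeal to the explicit description from Theorem~\ref{T:calculatingR}: $R$ is generated as a $\C$-algebra by the elementary symmetric polynomials in the degree-two classes $Y_{j}\partial_{1}^{*}$ for $j=2,\dotsc,n.$ At the cochain level each $Y_{j}\partial_{1}^{*}$ is represented by a functional on $\Lambda^{2}_{s}(\fg/\fg_{0})$ which can only be nonzero on wedges involving the factor $\partial_{1}\in \fg_{-1}.$ Since $\partial_{1}\notin \fg^{+},$ the restriction of this cochain to $\Lambda^{\bullet}_{s}(\fg^{+})$ vanishes identically, and therefore so does its cohomology class. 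Hence every generator of $R^{+}$ lies in the kernel of $\operatorname{res},$ which proves (a). Part (b) is then immediate by specializing to $N=K(\la).$

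For (c), equation~\eqref{E:resstarimage} combined with (b) gives
\[
\V_{(\ff,\ff_{\0})}(K(\la))/\Sigma_{n-1}\cong \resstar\bigl(\V_{(\ff,\ff_{\0})}(K(\la))\bigr)\subseteq \V_{(\fg,\fg_{0})}(K(\la))=\{0\}.
\]
Thus $\V_{(\ff,\ff_{\0})}(K(\la))$ consists of a single $\Sigma_{n-1}$-orbit; since $\Sigma_{n-1}$ is finite this orbit is finite, and since the variety is conical it must reduce to $\{0\}.$ The main obstacle I anticipate is a careful justification of the Frobenius reciprocity isomorphism in the relative cohomology category $\mathcal{C}_{(\fg,\fg_{0})}$ and the verification that the Yoneda action of $R$ does indeed factor through $\operatorname{res};$ both points are standard once one builds a $(\fg,\fg_{0})$-projective resolution of $K(\la)$ by inducing up from a $(\fg_{0}\oplus \fg^{+},\fg_{0})$-projective resolution of $L_{0}(\la),$ but some bookkeeping is needed since we are working in the relative category rather than the full module category.
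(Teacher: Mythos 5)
Your strategy for part (a) diverges from the paper's after the common Frobenius reciprocity step. The paper runs a Lyndon--Hochschild--Serre spectral sequence for $\fg^{+}\subseteq\fg_{0}\oplus\fg^{+}$ to reduce to $\Hom_{\fg_{0}}(L_{0}(\lambda),\Ext^{n}_{\mathcal{C}_{(\fg^{+},\{0\})}}(\C,N))$ and then uses the positive $\Z$-grading of $\Lambda_{s}^{n}((\fg^{+})^{*})\otimes N$ to show this vanishes for $n\gg 0$; you instead argue that the $R$-action factors through $\operatorname{res}:R\to\HH^{\bullet}(\fg_{0}\oplus\fg^{+},\fg_{0};\C)$ and that $\operatorname{res}$ annihilates $R^{+}$, yielding the stronger statement $R^{+}\subseteq I_{(\fg,\fg_{0})}(K(\lambda),N)$. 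That is a legitimate alternative route, and your caveat about the Shapiro/cup-product compatibility is well placed but surmountable.

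However, your justification that $\operatorname{res}(R^{+})=0$ is not correct as stated. You assert each generator of $R$ is represented by a cochain on $\Lambda_{s}^{\bullet}(\fg/\fg_{0})$ that is supported only on wedges involving $\partial_{1}$. This conflates $\ff$-cochains with $\fg$-cochains: the $Y_{j}\partial_{1}^{*}$ live in $S(\ff_{\1}^{*})$, and only their $\Sigma_{n-1}$-symmetric combinations arise as restrictions of (uniquely determined) $G_{0}$-invariant polynomials on $\fg_{-1}\oplus\fg_{1}$, and those $G_{0}$-invariant representatives do \emph{not} vanish when $\partial_{1}^{*}=0$. Already for $n=2$ the degree-two generator of $S((\fg_{-1}\oplus\fg_{1})^{*})^{G_{0}}$ is, in coordinates $a_{1}\partial_{1}+a_{2}\partial_{2}+b_{1}\xi_{1}\xi_{2}\partial_{1}+b_{2}\xi_{1}\xi_{2}\partial_{2}$, a scalar multiple of $a_{1}b_{2}-a_{2}b_{1}$, which restricted to $a_{1}=0$ is $-a_{2}b_{1}\neq 0$. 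What is true, and suffices, is that every positive-degree $G_{0}$-invariant on $\fg_{-1}\oplus\fg_{1}$ vanishes on all of $\fg_{1}$ (both $a_{i}=0$): the center of $G_{0}\cong \mathrm{GL}(n)$ acts with weight $+1$ on $\fg_{1}$ and $-1$ on $\fg_{-1}$, forcing any invariant to have balanced bidegree, hence to restrict to $0$ on $\fg_{1}$ unless it is constant; combined with Theorem~\ref{cutinvariants} (the invariant cochain already vanishes on wedges meeting $\fg_{2},\dots,\fg_{n-1}$) this gives $\operatorname{res}(R^{+})=0$. With that repair the argument goes through. Your deductions of (b) from (a) and of (c) from (b) via \eqref{E:resstarimage} match the paper, although for (c) it is cleaner to just note that $\resstar$ is a finite morphism and the image is $\{0\}$, so the conical source must be $\{0\}$.
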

\begin{proof} We present a modified version of the argument used to prove \cite[Theorem 3.2.1]{BKN2}. 
First observe that part (b) follows immediately from part (a). Also, as in the proof of \cite[Corollary~3.3.1]{BKN2}, for part (a) it suffices to 
prove that for $n$ sufficiently large, $\Ext^{n}_{\mathcal{C}_{(\fg,\fg_{0})}}(K(\lambda), N)=0$. 

By Frobenius reciprocity, for all $n$ we have 
$$\Ext^{n}_{\mathcal{C}_{(\La,\La_0)}}(K(\lambda),N)\cong \Ext^{n}_{\mathcal{C}_{(\fg_{0}\oplus 
\fg^{+},\La_0)}}( L_{0}(\lambda),N).$$ 
Since $\fg^{+}$ is an ideal in $\fg_{0}\oplus\fg^{+}$ one can apply the Lyndon-Hochschild-Serre spectral sequence to $(\fg^{+}, \{0 \}) \subseteq (\fg_{0}\oplus \fg^{+},\fg_{0})$: 
$$E_{2}^{i,j}=\Ext^{i}_{\mathcal{C}_{(\fg_0,\fg_0)}}(L_{0}(\lambda),
\Ext^{j}_{\mathcal{C}_{(\fg^{+},\{0\})}}({\mathbb C},N))\Rightarrow   
\Ext^{i+j}_{\mathcal{C}_{(\fg_{0}\oplus \fg^{+},\La_0)}}(L_{0}(\lambda),N).$$

Since ${\mathcal{C}_{(\fg_{0},\fg_{0})}}$ consists of $\fg_{0}$-supermodules which are finitely semisimple over $\fg_{0},$ this spectral sequence is zero for $i >0.$ That is, it collapses 
at the $E_{2}$ page and yields 
\begin{equation}\label{E:iso}
\Hom_{\fg_{0}}(L_{0}(\lambda), \Ext^{n}_{\mathcal{C}_{(\fg^{+},\{0\})}}({\mathbb C},N)) 
\cong \Ext^{n}_{\mathcal{C}_{(\fg_{0}\oplus \fg^{+},\La_0)}}(L_{0}(\lambda),N). 
\end{equation} 
According to the definition of relative cohomology, $\Ext^{n}_{\mathcal{C}_{(\fg^{+},\{0\})}}({\mathbb C},N)$ 
is a subquotient of $\Lambda_{s}^{n}((\fg^{+})^{*}) \otimes N$. But 
$\Lambda_{s}^{n}((\fg^{+})^{*})$ is positively graded by degree and $N$ is finite dimensional 
so for sufficiently large $n$ (depending on $\lambda$), 
$\Lambda_{s}^{n}((\fg^{+})^{*}) \otimes N$ contains no composition factors of the form 
$L_{0}(\lambda)$. Thus $\Ext^{n}_{\mathcal{C}_{(\fg,\fg_{0})}}(K(\lambda), N)=0$ for $n \gg 0$.

Finally, to prove part (c) it suffices to observe that the map $\resstar :\V_{(\ff, \ff)_{\0}}(K(\la)) \to \V_{(\La, \La_0)}(K(\la))=\{0 \}$ is finite-to-one.  Since $\V_{(\ff, \ff_{\0})}(K(\la))$ is a conical variety it follows that it must be 
equal to $\{0\}$. 
\end{proof}

\subsection{} Recall that Serganova \cite[Lemma 5.3]{Ser} proved that the set of  atypical weights for $\La$ is
$$\Omega =\{a\varepsilon_i+\varepsilon_{i+1}+ \dotsb  +\varepsilon_n\mid a\in \C,\ 1\leq i\leq n\}.$$
Moreover, she determined the characters of the simple $\fg$-supermodules by 
determining composition series for the Kac supermodules. Serganova's abridged results for finite dimensional 
simple supermodules are presented in the following theorem.  

\begin{thm}\label{thm:supp} \cite[Theorem 6.3, Corollary 7.6]{Ser} Let $\lambda\in X_{0}^{+}$. 
\begin{itemize} 
\item[(a)] If $\lambda \notin \Omega$ then $K(\lambda)\cong L(\lambda)$. 
\item[(b)] Let $\lambda \in \Omega .$
\begin{itemize} 
\item[(i)] If $\la=a\varepsilon _i+\varepsilon _{i+1}+\dotsb +\varepsilon _n$ with $a\neq 0,1$, then there is the following exact sequence:
\begin{equation}\label{E:supp1}0\rightarrow L(\la-\varepsilon _i)\rightarrow K(\la)\rightarrow L(\la)\rightarrow 0.
\end{equation} 
\item[(ii)] The structure of $K(0)$ and $K(\varepsilon _1+ \dotsb  +\varepsilon _n)$ is described by 
the exact sequences
\begin{equation}\label{E:supp2}
0\rightarrow L(-\varepsilon _n)\rightarrow K(0)\rightarrow L(0)\rightarrow 0,
\end{equation}
\begin{equation}\label{E:supp3}0\rightarrow L(0)\rightarrow K(\varepsilon _1+ \dotsb  +\varepsilon _n)\rightarrow L(\varepsilon _1+ \dotsb  +\varepsilon _n)\rightarrow 0.
\end{equation}
\end{itemize} 
\end{itemize} 
\end{thm}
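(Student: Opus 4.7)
The plan is to analyze $K(\la)$ by combining a PBW description as a $\fg_{0}$-module with an explicit search for primitive vectors (weight vectors killed by $\fg^{+}$ and by the upper nilradical of $\fg_{0}$) lying strictly below $\la$, and to identify the simple submodule they generate via a character/dimension comparison.

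By PBW for Lie superalgebras, since $\fg^{-} = \fg_{-1}$ is purely odd of dimension $n$, one has $K(\la) \cong \Lambda(\fg_{-1}) \otimes L_0(\la)$ as $\fg_0$-modules, of dimension $2^n \dim L_0(\la)$. Using the basis $\partial_1, \dotsc , \partial_n$ of $\fg_{-1}$, every weight vector admits a unique expansion $\sum_I \partial_I \otimes v_I$ with $I \subseteq \{1, \dotsc , n\}$ ordered and $v_I \in L_0(\la)$. This both fixes the character of $K(\la)$ and gives a concrete model for locating primitive vectors. For part (a), if $\la \notin \Omega$, the linear system cutting out a primitive vector of weight $\la - \mu$ (with $\mu \ne 0$), obtained by applying the upper nilradical of $\fg_0$ and the generators $\xi_i\xi_j\partial_k \in \fg_1$, has only the trivial solution: the obstruction to solvability is precisely Serganova's combinatorial criterion for membership in $\Omega$, so the absence of such vectors forces $K(\la) = L(\la)$.

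For part (b)(i), with $\la = a\varepsilon_i + \varepsilon_{i+1} + \dotsb + \varepsilon_n$ and $a \neq 0, 1$, one writes down an explicit candidate primitive vector of weight $\la - \varepsilon_i$,
\[
w = \partial_i \otimes v_\la + \sum_{j < i} \partial_j \otimes u_j,
\]
where $v_\la$ is a highest weight vector of $L_0(\la)$ and each $u_j \in L_0(\la)$ has weight $\la + \varepsilon_j - \varepsilon_i$. Requiring $w$ to be annihilated by the upper nilradical of $\fg_0$ and by $\fg^+$ produces a linear system in the $u_j$ whose denominator is proportional to $a(a-1)$, so the system is solvable exactly when $a \notin \{0,1\}$. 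The cyclic supermodule $\langle w \rangle \subseteq K(\la)$ is then a nonzero quotient of $K(\la - \varepsilon_i)$; a character comparison against $\Lambda(\fg_{-1}) \otimes L_0(\la)$ forces $\langle w \rangle = L(\la - \varepsilon_i)$ and $K(\la)/\langle w \rangle = L(\la)$, yielding \eqref{E:supp1}.

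Part (b)(ii) covers the degenerate boundary values where the construction above fails. For $\la = 0$, a direct calculation with the $W(n)$ bracket shows that $\partial_n \otimes 1_{L_0(0)}$ is itself primitive of weight $-\varepsilon_n$, so it generates a copy of $L(-\varepsilon_n)$ of codimension one in $K(0)$, yielding \eqref{E:supp2}. Dually, for $\la = \varepsilon_1 + \dotsb + \varepsilon_n$, the top exterior product $\partial_1\partial_2\dotsb \partial_n \otimes v_\la$ has weight zero and is primitive by a similar direct check, so it generates the trivial supermodule $L(0)$ inside $K(\la)$ and gives \eqref{E:supp3}. The principal obstacle in the whole argument is twofold: verifying that the denominator in the linear system for $w$ in (b)(i) is exactly $a(a-1)$, which requires careful supercommutator bookkeeping for the action of the basis elements $\xi_I\partial_j$ of $\fg^+$ on $\Lambda(\fg_{-1})\otimes L_0(\la)$; and using the character identity coming from the PBW decomposition to rule out additional composition factors beyond the two listed.
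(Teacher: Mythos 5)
The theorem under discussion is not proved in the paper: it is imported verbatim from Serganova, as the label \cite[Theorem 6.3, Corollary 7.6]{Ser} makes explicit, and the paper simply records her composition-series results for the Kac supermodules as an input to the support-variety computation that follows. So there is no in-paper proof to compare your argument against; what you have written is an attempt to re-derive Serganova's theorem from scratch.

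As a sketch of Serganova's approach, the outline is reasonable: using the PBW factorization $K(\la) \cong \Lambda(\fg_{-1})\otimes L_0(\la)$ as a $\fg_0$-module and hunting for primitive vectors is indeed the right machinery. But two of the concrete steps do not hold up as written. First, the candidate primitive vector $w = \partial_i \otimes v_\la + \sum_{j<i}\partial_j\otimes u_j$ with $u_j$ of weight $\la + \varepsilon_j - \varepsilon_i$ cannot be nontrivial: for $j < i$ the weight $\la + \varepsilon_j - \varepsilon_i$ lies strictly above $\la$ and so is not a weight of $L_0(\la)$, forcing $u_j = 0$ and collapsing $w$ to $\partial_i\otimes v_\la$. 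That vector is not primitive in general; for example, when $i=1$ one computes $e_{1l}\cdot(\partial_1\otimes v_\la) = [\xi_1\partial_l,\partial_1]\otimes v_\la = -\partial_l\otimes v_\la \neq 0$. The auxiliary sum must run over $j>i$ (equivalently over those $j$ for which $\la+\varepsilon_j-\varepsilon_i$ is actually a weight of $L_0(\la)$), and this mis-indexing is a sign that the linear system has not actually been set up. Second, the argument for part (a) is circular: the set $\Omega$ is defined via Serganova's Lemma 5.3 as the combinatorial characterization of atypical weights, and the content of the theorem is precisely that this same set governs the reducibility of $K(\la)$. Asserting that the obstruction to finding a primitive vector ``is precisely Serganova's combinatorial criterion for membership in $\Omega$'' assumes what must be proved. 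Likewise, the claimed denominator $a(a-1)$ is the crux of (b)(i) but is stated rather than derived. If you want a self-contained proof, these two computations (the full primitivity system for general $i$, and the nonexistence of primitive vectors for $\la\notin\Omega$) need to be carried out; otherwise the cleaner move is to do as the paper does and cite Serganova directly.
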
 

From the above theorem one has an alternative characterization of typical/atypical for $\lambda\in X_{0}^{+}:$ namely, $\lambda$ is typical if and only if $K(\lambda)$ is simple. 

\subsection{Calculation of Support Varieties for Simple Supermodules} The following theorem presents the computation of support varieties of simple $\fg$-supermodules. 
Our results demonstrate that $L(\lambda)$ is typical if and only if the support variety of $L(\lambda)$ is zero.  

\begin{thm} Let $\la \in X_{0}^{+}$ and let $L(\la)$ be finite dimensional simple $\La$-supermodule
with highest weight $\la$. Then,
\begin{itemize} 
\item[(a)] If $\la \notin \Omega$ then $\V_{(\La, \La_0)}(L(\la))=\V_{(\ff,\ff_{\0})}(L(\la))=\{0\};$
\item[(b)] If $\la \in \Omega$ then $\V_{(\ff,\ff_{\0})}(L(\la))=\V_{(\ff,\ff_{\0})}(\C)$ and $\V_{(\La, \La_0)}(L(\la))=\V_{(\La, \La_0)}(\C)$.  
\end{itemize}
\end{thm}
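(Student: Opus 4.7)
The plan is as follows. For part (a), the argument is immediate: if $\la\notin\Omega$ then Theorem~\ref{thm:supp}(a) gives $K(\la)\cong L(\la)$, so both support varieties of $L(\la)$ agree with those of $K(\la)$, which are $\{0\}$ by Proposition~\ref{P:kacsupport}. For part~(b), my plan is to first establish the equality $\V_{(\ff,\ff_{\0})}(L(\la))=\V_{(\ff,\ff_{\0})}(\C)$ for every $\la\in\Omega\cap X_0^+$, and then transfer it to the $\fg$-side using the restriction map $\resstar$.

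The key observation will be the following: for any short exact sequence appearing in Theorem~\ref{thm:supp}(b), of the form $0\to L(\mu)\to K(\nu)\to L(\nu')\to 0$, Proposition~\ref{P:kacsupport}(c) gives $\V_{(\ff,\ff_{\0})}(K(\nu))=\{0\}$, so two applications of the subadditivity property in Theorem~\ref{T:fvarietyprops}(d) collapse the sequence to the identity $\V_{(\ff,\ff_{\0})}(L(\mu))=\V_{(\ff,\ff_{\0})}(L(\nu'))$. Starting from the trivial module $L(0)=\C$, whose support variety is tautologically $\V_{(\ff,\ff_{\0})}(\C)$, I would apply this to \eqref{E:supp2} to obtain $\V_{(\ff,\ff_{\0})}(L(-\varepsilon_n))=\V_{(\ff,\ff_{\0})}(\C)$ and then perform a downward induction on $b\le -1$ using \eqref{E:supp1} with $i=n$, $a=b$, which propagates this equality to every $L(b\varepsilon_n)$ with $b\le 0$. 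Dually, \eqref{E:supp3} handles $L(\varepsilon_1+\cdots+\varepsilon_n)$, and an upward induction on $a\ge 2$ using \eqref{E:supp1} with $i=1$ extends the equality to every $L(a\varepsilon_1+\varepsilon_2+\cdots+\varepsilon_n)$. This exhausts $\Omega\cap X_0^+$ and settles the $\ff$-side of part~(b).

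To finish, I would push the result forward along $\resstar$. By Theorem~\ref{T:relatingcohom}, for $M=\C$ the inclusion in \eqref{E:resstarimage} is an equality $\resstar(\V_{(\ff,\ff_{\0})}(\C))=\V_{(\fg,\fg_{0})}(\C)$, since $\resstar$ at the trivial module realizes the $\Sigma_{n-1}$-quotient $\mathbb{A}^{n-1}\to\mathbb{A}^{n-1}/\Sigma_{n-1}$ and is in particular surjective. Combining this with the general inclusion $\resstar(\V_{(\ff,\ff_{\0})}(L(\la)))\subseteq\V_{(\fg,\fg_{0})}(L(\la))$ from \eqref{E:resstarimage} and the trivial containment $\V_{(\fg,\fg_{0})}(L(\la))\subseteq\V_{(\fg,\fg_{0})}(\C)$, one is squeezed into equality. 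The main obstacle I anticipate is simply careful bookkeeping in the inductive chains, i.e.\ verifying that every intermediate simple module $L(\mu)$ appearing in the induction is again labelled by a dominant weight in $\Omega\cap X_0^+$, so that the chain stays within the collection of simple supermodules described by Theorem~\ref{thm:supp}(b) and terminates at the base cases $\la=0$ and $\la=\varepsilon_1+\cdots+\varepsilon_n$.
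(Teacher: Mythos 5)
Your proposal is correct and follows essentially the same approach as the paper: part (a) via $K(\la)\cong L(\la)$ and Proposition~\ref{P:kacsupport}, and part (b) by first establishing the $\ff$-equality through the subadditivity property (Theorem~\ref{T:fvarietyprops}(d)) combined with the vanishing of Kac supermodule supports, inducting along the short exact sequences \eqref{E:supp1}--\eqref{E:supp3}, and then transferring to $\fg$ by the squeeze $\V_{(\fg,\fg_0)}(\C)=\resstar(\V_{(\ff,\ff_\0)}(\C))=\resstar(\V_{(\ff,\ff_\0)}(L(\la)))\subseteq\V_{(\fg,\fg_0)}(L(\la))\subseteq\V_{(\fg,\fg_0)}(\C)$. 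The only difference is purely expository: you spell out which index $i$ and parameter $a$ in \eqref{E:supp1} drives each branch of the induction, while the paper leaves this implicit.
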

\begin{proof} Part (a) is immediate from Proposition~\ref{P:kacsupport}(c) and Theorem ~\ref{thm:supp}(a). 

To prove part (b), first observe that it suffices to prove $\V_{(\ff,\ff_{\0})}(L(\la))=\V_{(\ff,\ff_{\0})}(\C).$ Namely, one will then have
\[
  \V_{(\fg, \fg_{0})}(\C) = \resstar \left( \V_{(\ff, \ff_{\0})}(\C )\right)=\resstar \left( \V_{(\ff, \ff_{\0})}(L(\lambda))\right) \subseteq \V_{(\La, \La_0)}(L(\la)) \subseteq  \V_{(\La, \La_0)}(\C),
\] which implies the result for $\fg.$
Furthermore, recall from Section~\ref{S:OnWn} that 
$$\Omega\cap X_{0}^{+}=\{a\varepsilon_{1}+\dots +\varepsilon _n \mid a = 1,2,3,\dotsc \}\cup 
\{b\varepsilon _n \mid\ b=0,-1,\dots \}.$$

We will repeatedly use two facts about support varieties of $\ff$-supermodules: the support variety of a supermodule in a short exact sequence is 
contained in the union of support variety of the other two supermodules by Theorem~\ref{T:fvarietyprops}(d), and the support variety of a Kac supermodule is zero by Proposition~\ref{P:kacsupport}(c). 

Note that $L(0)\cong \C$ and set $\V:=\V_{(\ff,\ff_{\0})}(\C)$. From \eqref{E:supp2} it follows that 
$\V=\V_{(\ff,\ff_{\0})}(L(-\epsilon_{n}))$. One can now use the remarks from the previous paragraph and \eqref{E:supp1} to recursively prove that $\V=\V_{(\ff,\ff_{\0})}(L(-b\epsilon_{n}))$ 
for $b=-2,-3,\dots$. 
Similarly, we have $\V=\V_{(\ff,\ff_{\0})}(L(\epsilon_{1}+\dots+\epsilon_{n}))$ 
from \eqref{E:supp3}.  Applying \eqref{E:supp1} recursively shows that 
$\V=\V_{(\ff,\ff_{\0})}(L(a\epsilon_{1}+\dots+\epsilon_{n}))$ for all $a = 2, 3, \dotsc $.  Note that all elements of $\Omega \cap X_{0}^{+}$ were considered above and thus the theorem is proven.
\end{proof}

\section{Realization of Support varieties}\label{S:realization}

\subsection{} One important property in the theory of support varieties is the realizability 
of any conical variety as the support variety of some module in the category. Carlson \cite{Ca} first 
proved this for finite groups in the 1980s. Friedlander and Parshall \cite{FPa}  
later used Carlson's proof to establish realizability for restricted 
Lie algebras. For arbitrary finite group schemes the finite 
generation of cohomology due to Friedlander and Suslin \cite{FS} allowed one to 
define support varieties. In this generality the realizability of supports 
was established using Friedlander and Pevtsova's method \cite{FPe} of concretely 
describing support varieties through $\pi$-points. 

In the classical Lie superalgebra setting the realizability of supports 
was established for the detecting subalgebra $\fe$ in \cite[Theorem 6.4.3]{BKN1}. 
The main tool to establish this theorem is the tensor product 
theorem \cite[Proposition 6.3.1]{BKN1}. Since the detecting subalgebra for $W(n)$ also has the tensor product theorem by Theorem~\ref{T:fvarietyprops}(a), it follows by the argument used in the classical case that the realization theorem also holds for $\ff$. The goal of this section is 
to lift these realization theorems to the support varieties of $\fg$ 
where $\fg$ is a classical Lie superalgebras as considered in \cite{BKN1} or $W(n)$.

\subsection{} There is a slight difference in the way that the support varieties and detecting subalgebras are defined in the two cases.  We will fix a common notation which allows us to treat both cases more or less simultaneously.  Let $\fg$ be a classical Lie superalgebra with a polar and stable 
action of $G_{\0}$ on $\fg_{\bar{1}}$ as in \cite{BKN1} or let $\fg=W(n)$. Let $\mathcal{C}=\mathcal{C}_{(\fg ,\fg_{\0})}$ if $\fg$ is classical and  $\mathcal{C}=\mathcal{C}_{(\fg ,\fg_{0})}$ if $\fg = W(n).$
Let 
$$
\HH^{\bullet}= 
\begin{cases}
\HH^{\bullet}(\fg,\fg_{\0};{\mathbb C}), & \text{if $\fg$ is classical;}\\
\HH^{\bullet}(\fg,\fg_{0};{\mathbb C}),  & \text{if $\fg=W(n)$.}
\end{cases}
$$
Let $\fa$ denote the detecting subalgebra for $\fg$ and let $W$ denote the finite pseduoreflection group such that 
\[
\operatorname{res}: \HH^{\bullet} \xrightarrow{\cong} \HH^{\bullet}(\fa ,\fa_{\0};\C )^{W} \subseteq \HH^{\bullet}(\fa ,\fa_{\0};\C ).
\]  
If $M$ and $N$ are objects in $\mathcal{C}$ for which $\Ext_{{\mathcal C}}^{\bullet}(M,N)$ (resp.\ 
$\Ext_{{\mathcal C}}^{\bullet}(M,M)$) is a finitely generated $\HH^{\bullet}$-module, then write $\V_{\fg}(M,N)$ for the corresponding relative support variety (resp.\ $\V_{\fg}(M)$ for the corresponding support variety). 

 As a matter of notation, if $J$ is an ideal of some commutative ring $A,$ then let $\mathcal{Z}(J)$ be the variety defined by $J.$  That is,
\[
\mathcal{Z}(J) = \left\{\mathfrak{m} \in \MaxSpec (A) \mid J \subseteq \mathfrak{m}  \right\}.
\]  In particular, for $a \in A$ let $\mathcal{Z}(a)$ denote the variety defined by the ideal $(a).$

\subsection{Tensor Products}\label{SS:tensorproduct} One of the fundamental results in the theory of support varieties for finite group schemes is that the support variety of the tensor product of two modules is the intersection of the two modules' support varieties.  Lacking equality in \eqref{E:resstarimage} we are limited to the following analogue.

\begin{lem} \label{L:tensorproduct}  Let $\fg$ be a classical, stable, and polar Lie superalgebra or let $\fg$ be $W(n).$  Let $\fa$ be the detecting subalgebra of $\fg$.  Let $M$ and $N$ be $\fa$-supermodules in $\mathcal{C}_{(\fa ,\fa_{\0})}$ for which 
$\Ext^{\bullet}_{\mathcal{C}_{(\fa ,\fa_{\0})}}(M, M),$ $\Ext^{\bullet}_{\mathcal{C}_{(\fa ,\fa_{\0})}}(N,N),$ and 
$\Ext^{\bullet}_{\mathcal{C}_{(\fa ,\fa_{\0})}}(M\otimes N, M\otimes N)$ are finitely generated $\HH^{\bullet}(\fa, \fa_{\0};\C )$-modules. Then,
\[
\resstar \left(\V_{(\fa,\fa_{\0})}\left(M \otimes N \right) \right) = \resstar \left(\V_{(\fa,\fa_{\0})}\left(M \right) \right) \cap \resstar \left(\V_{(\fa,\fa_{\0})}\left(N \right) \right).
\]

\end{lem}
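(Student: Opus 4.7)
The plan is to combine the tensor product theorem for the detecting subalgebra $\fa$ with the description of $\resstar$ as a quotient by the pseudoreflection group $W$ coming from Theorem~\ref{T:relatingcohom} (and its classical analogue in \cite{BKN1}). By the tensor product theorem for $\fa$, which is Theorem~\ref{T:fvarietyprops}(a) in the $W(n)$ case and \cite[Proposition 6.3.1]{BKN1} in the classical case, one has
\[
\V_{(\fa,\fa_\0)}(M \otimes N) = \V_{(\fa,\fa_\0)}(M) \cap \V_{(\fa,\fa_\0)}(N)
\]
inside $\V_{(\fa,\fa_\0)}(\C)$. Applying $\resstar$ immediately yields the containment $\resstar(\V_{(\fa,\fa_\0)}(M \otimes N)) \subseteq \resstar(\V_{(\fa,\fa_\0)}(M)) \cap \resstar(\V_{(\fa,\fa_\0)}(N))$, since the image of an intersection is always contained in the intersection of images.

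For the reverse containment, the identification $\HH^\bullet \cong \HH^\bullet(\fa,\fa_\0;\C)^W$ shows that $\resstar$ corresponds on maximal spectra to the categorical quotient $\pi : \V_{(\fa,\fa_\0)}(\C) \to \V_{(\fa,\fa_\0)}(\C)/W = \V_\fg(\C)$ by the finite group $W$. In particular the fibers of $\resstar$ are precisely $W$-orbits. Given $v \in \resstar(\V_{(\fa,\fa_\0)}(M)) \cap \resstar(\V_{(\fa,\fa_\0)}(N))$, there exist $x \in \V_{(\fa,\fa_\0)}(M)$ and $y \in \V_{(\fa,\fa_\0)}(N)$ with $\resstar(x) = \resstar(y) = v$, so $x$ and $y$ lie in a common fiber $\mathcal{O} := \resstar^{-1}(v)$, a single $W$-orbit. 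I then need to produce a point of $\mathcal{O}$ that lies in $\V_{(\fa,\fa_\0)}(M) \cap \V_{(\fa,\fa_\0)}(N) = \V_{(\fa,\fa_\0)}(M \otimes N)$.

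The main obstacle is exactly this last step, and the natural strategy is to establish $W$-stability of the support variety $\V_{(\fa,\fa_\0)}(K)$ for every $K \in \mathcal{C}_{(\fa,\fa_\0)}$; once this is known, any $W$-orbit meeting $\V_{(\fa,\fa_\0)}(K)$ is automatically contained in it, and the equality $\pi(X \cap Y) = \pi(X) \cap \pi(Y)$ for $W$-stable $X, Y$ under a finite group quotient is elementary. To obtain $W$-stability, the plan is to exploit that every object of $\mathcal{C}_{(\fa,\fa_\0)}$ is finitely semisimple over $\fa_\0 = \Lie(T)$, so that the $\fa_\0$-action on $K$ integrates to an action of the ambient torus $T$; combined with an averaging argument using the normalizer $N$ of $T$ in $G_\0$ (which normalizes $\fa$), this should produce a $W = N/T$-equivariant structure on $\Ext^\bullet_{\mathcal{C}_{(\fa,\fa_\0)}}(K, K)$ compatible with the action of $\HH^\bullet(\fa,\fa_\0;\C)$. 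Such compatibility forces the annihilator ideal of this $\Ext$-module, and hence $\V_{(\fa,\fa_\0)}(K)$ itself, to be $W$-stable, closing the argument.
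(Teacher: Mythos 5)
Your argument has the same shape as the paper's: both start from the tensor product theorem for $\fa$-support varieties, and both then pass through the quotient by $W$. The paper does this at the ideal level, invoking $(I+J)^W=I^W+J^W$ from the proof of Theorem~\ref{T:fvarietyprops}; you do it at the variety level, invoking the elementary fact that a finite-group quotient $\pi$ satisfies $\pi(X\cap Y)=\pi(X)\cap\pi(Y)$ whenever $X,Y$ are $W$-stable. These are two views of the same bottleneck: the ideal identity holds only for $W$-stable graded ideals, so both arguments hinge on $W$-stability of $\V_{(\fa,\fa_\0)}(K)$. Your reduction to this point is correct, and in isolating it you have located precisely what the paper's proof passes over in silence.

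The step you propose for establishing $W$-stability, however, does not go through. The $\fa_\0$-action on $K$ integrates to the torus $T$ (in the $W(n)$ case $\fa_\0=\ff_\0=\fh=\Lie T$), but $T$ is exactly what has been divided out: $T$ acts trivially on $\HH^{\bullet}(\fa,\fa_\0;\C)$, so $T$-equivariance of $\Ext^{\bullet}_{\mathcal{C}_{(\fa,\fa_\0)}}(K,K)$ imposes no further constraint on the annihilator ideal. To get a $W$-equivariant structure one would need, for each $n\in N$ representing a nontrivial coset in $W=N/T$, an isomorphism of $\fa$-supermodules $K\cong K^{n}$, where $K^{n}$ is the twist of $K$ by $\operatorname{Ad}_n$. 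No averaging produces such isomorphisms: a general object of $\mathcal{C}_{(\fa,\fa_\0)}$ carries no compatible $N$-action, and $K^{n}$ is typically a genuinely different $\fa$-supermodule (already the $\fh$-weight decomposition is permuted by $n$). In the rank-variety picture, conjugation by $n$ carries $\V^{\rank}(K)$ to $\V^{\rank}(K^{n^{-1}})$, not back to $\V^{\rank}(K)$. When $K$ is the restriction of an object of $\mathcal{C}_{(\fg,\fg_0)}$ — in particular for the Carlson supermodules to which this lemma is ultimately applied — the integrated $G_0$-action on the $\fg$-supermodule does furnish the isomorphisms $K\cong K^{n}$, and your argument would then close; but the lemma is stated for arbitrary objects of $\mathcal{C}_{(\fa,\fa_\0)}$, and for those the $W$-stability needed by both your argument and the paper's use of $(I+J)^W=I^W+J^W$ is not justified. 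A smaller point: $N$ is the normalizer of the principal isotropy group $H$ (which equals $T_{n-1}$), not the normalizer of $T$.
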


\begin{proof}  If $M$ is a finite dimensional $\fg$-supermodule in $\mathcal{C}$ and $I_{(\fa ,\fa_{\0})}(M)$ is the ideal which defines $\V_{(\fa ,\fa_{\0})}(M)$, then $\resstar \left(\V_{(\fa ,\fa_{\0})}(M) \right)$ is the variety defined by the ideal $\operatorname{res}^{-1}\left(I_{(\fa ,\fa_{\0})}(M) \right).$  Recall that 
\[
\operatorname{res}: \HH^{\bullet} \xrightarrow{\cong} \HH^{\bullet}(\fa,\fa_{\0};\C )^{W} \subseteq \HH^{\bullet}(\fa,\fa_{\0};\C ),
\] where $W$ is a finite group.  If we identify $\HH^{\bullet}$ with its image under this map, one has $\operatorname{res}^{-1}(J)=J^{W}$ for any ideal $J$ in $ \HH^{\bullet}(\fa,\fa_{\0};\C ).$  Furthermore, given an ideal $I$ we write $\sqrt{I}$ for the radical of the ideal.

By the tensor product property of $\fa$ support varieties (cf. \cite[Proposition 6.3.1(a)]{BKN1} and Theorem~\ref{T:fvarietyprops}(a)) one has 
\[
\resstar \left(\V_{(\fa,\fa_{\0})}\left(M \otimes N \right) \right) =\resstar \left(\V_{(\fa,\fa_{\0})}\left(M \right)  \cap  \V_{(\fa,\fa_{\0})}\left(N \right) \right). 
\]  Applying the earlier remarks, at the level of ideals the above equality becomes 
\[
\sqrt{I_{(\fa ,\fa_{\0})}(M \otimes N)^{W}} = \sqrt{\left(I_{(\fa ,\fa_{\0})}(M) + I_{(\fa ,\fa_{\0})}(N)\right)^{W}}.
\]  However by \eqref{E:idealinvariants} one has 
\[
\sqrt{\left(I_{(\fa ,\fa_{\0})}(M) + I_{(\fa ,\fa_{\0})}(N)\right)^{W}} = \sqrt{I_{(\fa ,\fa_{\0})}(M)^{W} + I_{(\fa ,\fa_{\0})}(N)^{W}}.
\]  As the latter ideal defines the variety $\resstar \left(\V_{(\fa,\fa_{\0})}\left(M \right) \right) \cap  \resstar \left( \V_{(\fa,\fa_{\0})}\left(N \right) \right), $ this yields the desired result.
\end{proof}

\subsection{Carlson Supermodules} To prove realizability one needs to introduce a family of supermodules for which one can explicitly calculate their support varieties.  Let $n >0$ and let $\zeta \in \HH^{n}$. We can consider $\zeta$ to be a $\fg$-homomorphism from 
the $n$th syzygy of the trivial supermodule, $\Omega^{n}({\mathbb C}),$ to ${\mathbb C}$. Set 
$$L_\zeta =\Ker(\zeta : \Omega ^n(\C)\rightarrow \C)\subseteq \Omega ^n(\C).$$
These supermodules are often referred to as ``Carlson modules.''  As in the theory of support varieties 
for finite group schemes the importance of the supermodule $L_\zeta $ is that one can explicitly 
realize its support as the zero locus of $\zeta$ in $\text{MaxSpec}(\HH^{\bullet})$.  

\subsection{}\label{SS:CarlsonSupportsforDetecting}  The first step is to compute the support variety of $L_{\zeta}$ over the detecting subalgebra.

\begin{lem}\label{L:Lzetavariety}  Let $\fg$ be a classical, stable, and polar Lie superalgebra or let $\fg$ be $W(n)$.  Let $\fa$ denote the detecting subalgebra of $\fg.$ Given  $\zeta \in \HH^n$ and $L_{\zeta}$ as above, then 
\begin{equation*}
\V_{(\fa ,\fa_{\0})}(L_{\zeta}) =\V_{(\fa ,\fa_{\0})}(L_{\operatorname{res}(\zeta)}) = \mathcal{Z}(\operatorname{res}(\zeta)).
\end{equation*}

\end{lem}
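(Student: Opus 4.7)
My plan is to establish the two equalities separately.

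The second equality $\V_{(\fa,\fa_{\0})}(L_{\operatorname{res}(\zeta)}) = \mathcal{Z}(\operatorname{res}(\zeta))$ is the realization theorem for Carlson modules over the detecting subalgebra $\fa$. In the classical case this is \cite[Theorem 6.4.3]{BKN1}, whose proof relies on the rank-variety description of $\fa$-support together with the tensor product property for $\fa$-supermodules. For $\fg = W(n)$ the tensor product property is supplied by Theorem~\ref{T:fvarietyprops}(a), so the same argument applies verbatim to $\fa = \ff$.

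For the first equality my strategy is to identify $L_\zeta|_\fa$ with $L_{\operatorname{res}(\zeta)}$ up to stable isomorphism in $\mathcal{C}_{(\fa,\fa_{\0})}$, since stably isomorphic supermodules share the same support variety. I would choose a projective resolution $P_\bullet \to \C$ in $\mathcal{C}$, which presents $\zeta$ as the class of a $\fg$-cocycle $P_n \to \C$ whose restriction to $\Omega^n(\C)$ has kernel $L_\zeta$. Granted that the restricted sequence $P_\bullet|_\fa \to \C$ is a projective resolution in $\mathcal{C}_{(\fa,\fa_{\0})}$, Schanuel's lemma identifies $\Omega^n(\C)|_\fa$ with the corresponding $\fa$-syzygy up to a projective summand, and the restricted cocycle represents $\operatorname{res}(\zeta)$ by the very definition of the restriction map on cohomology. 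Taking kernels yields $L_\zeta|_\fa \cong L_{\operatorname{res}(\zeta)}$ modulo a projective summand, which is enough.

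The hard part is therefore showing that the restriction of a projective object in $\mathcal{C}$ remains projective in $\mathcal{C}_{(\fa,\fa_{\0})}$. Every relative projective over $\fg$ is a summand of an induced module $U(\fg)\otimes_{U(\fg_0)}V$ for a $\fg_{0}$-semisimple $V$, and analogously for $\fa$; so the required statement reduces to decomposing $(U(\fg)\otimes_{U(\fg_0)}V)|_\fa$ as a direct sum of modules of the form $U(\fa)\otimes_{U(\fa_{\0})}W$. Using the PBW theorem for Lie superalgebras applied to an $\fa_{\0}$-invariant complement of $\fa$ inside $\fg$, together with the explicit structure of $\fa$ from Section~\ref{S:invarianttheory} in the $W(n)$ case (and from \cite{BKN1} in the classical case), this decomposition can be carried out directly. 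Once this input is in hand, the remainder of the argument reduces to routine homological algebra.
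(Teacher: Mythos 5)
Your overall strategy matches the paper's: restrict the short exact sequence $0 \to L_{\zeta} \to \Omega^n(\C) \to \C \to 0$ to $\fa$, apply graded Schanuel to conclude $L_{\zeta}\!\downarrow_{\fa} \cong L_{\operatorname{res}(\zeta)} \oplus P$ with $P$ projective (so the $\fa$-support varieties of $L_\zeta$ and $L_{\operatorname{res}(\zeta)}$ agree), and then compute $\V_{(\fa,\fa_{\0})}(L_{\operatorname{res}(\zeta)}) = \mathcal{Z}(\operatorname{res}(\zeta))$ via the rank-variety machinery. Your attention to the fact that restriction must carry projectives in $\mathcal{C}_{(\fg,\fg_{\diamond})}$ to projectives in $\mathcal{C}_{(\fa,\fa_{\0})}$ — and the PBW-based sketch you give for it — is precisely the point the paper's one-line invocation of Schanuel leaves implicit, so that part is a useful elaboration rather than a deviation.

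The one genuine gap is in your handling of the second equality when $\fg = W(n)$. You assert that, because $\ff$ enjoys the tensor-product property by Theorem~\ref{T:fvarietyprops}(a), the argument of \cite[Theorem 6.4.3]{BKN1} ``applies verbatim to $\fa = \ff$.'' But the driving input in that argument is the rank-variety description of support, and for $W(n)$ that description (\cite[Theorem 6.3.2]{BKN1}) is available only for the auxiliary subalgebra $\tilde{\ff}$, not for $\ff$ itself (the differentials for $\ff$ are not forced to vanish by a purely abelian structure, and $\ff_{\0}$ does not act trivially on $\ff_{\1}$). What the paper actually does is compute $\V_{(\tilde{\ff},\tilde{\ff}_{\0})}(L_{\operatorname{res}(\zeta)})$ by rank varieties and then transport the answer to $\ff$ via Theorem~\ref{T:generalresequality}, which identifies $\V_{(\ff,\ff_{\0})}(M)$ with $\resstar\bigl(\V_{(\tilde{\ff},\tilde{\ff}_{\0})}(M)\bigr)$. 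Your proposal omits this transfer step, so as written the $W(n)$ half of the second equality is not justified; inserting the passage through $\tilde{\ff}$ and Theorem~\ref{T:generalresequality} closes the gap and brings your argument in line with the paper's.
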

  
\begin{proof} Given  $\zeta \in \HH^n$, we compute the $\fa$ support variety of $L_\zeta $ as follows. Construct the short exact sequence of $\fg$-supermodules, 
$$0\rightarrow L_\zeta \rightarrow \Omega ^n(\C)\xrightarrow{\zeta} \C\rightarrow 0.$$ 
Upon restriction to $\fa$ one obtains the short exact sequence,
$$0\rightarrow L_\zeta \downarrow _{\fa}\rightarrow \Omega ^n(\C)\downarrow _\fa\xrightarrow{\operatorname{res}(\zeta)} \C\rightarrow 0.$$

By using the graded version of Schaunel's Lemma, $L_\zeta \downarrow _{\fa}\cong L_{\operatorname{res}(\zeta )}\oplus P $ 
and $\Omega ^n(\C)\downarrow _\fa\cong \Omega_\fa^n(\C)\oplus P$, where $\Omega_\fa^n(\C)$
denotes $\Omega^n(\C)$ for the trivial $\fa$-supermodule, $L_{\operatorname{res}(\zeta)}$ is the Carlson $\fa$-supermodule for $\operatorname{res}(\zeta)\in \HH^{n}(\fa,\fa_{\0};\C ),$ and $P$ is some projective $\fa$-supermodule. 
Therefore, we have the following short exact sequence of $\fa$-supermodules:

$$0\rightarrow L_{\operatorname{res}(\zeta )}\oplus P\rightarrow \Omega_\fa^n(\C)\oplus P\rightarrow \C\rightarrow 0. $$
By the rank variety description (cf.\ \cite[Theorem 6.4.3]{BKN1}) of $\V_{(\fa,\fa_{\0})}(L_{\operatorname{res}(\zeta)})$ when $\fg$ is classical or the rank variety description of $\V_{(\tilde{\ff},\tilde{\ff}_{\0})}(L_{\operatorname{res}(\zeta)})$  and Theorem~\ref{T:generalresequality} when $\fg =W(n),$ one has that $\V_{(\fa ,\fa_{\0})}(L_{\operatorname{res}(\zeta)}) = \mathcal{Z}(\operatorname{res}(\zeta)).$  Therefore, since $L_\zeta \downarrow _{\fa}\cong L_{\operatorname{res}(\zeta )}\oplus P,$ one has that  
\begin{equation*}
\V_{(\fa ,\fa_{\0})}(L_{\zeta}) = \V_{(\fa ,\fa_{\0})}(L_{\operatorname{res}(\zeta)}) = \mathcal{Z}(\operatorname{res}(\zeta)).
\end{equation*}  
\end{proof}

\subsection{}\label{SS:CarlsonmodsforW} We should warn the reader that if $\fg =W(n)$ it may be that $L_{\zeta}$ is infinite dimensional and, hence, $\Ext^{\bullet}_{\mathcal{C}}(L_{\zeta},L_{\zeta})$ is no longer necessarily finitely generated as an $\HH^{\bullet}$-module.  Let us mention that since as $\fa$-supermodules $L_{\zeta} \cong L_{\operatorname{res}(\zeta)} \oplus P$ and  $L_{\operatorname{res}(\zeta)}$ is finite dimensional (since the projective indecomposible $\fa$-supermodules are finite dimensional by \cite[Proposition 5.2.2]{BKN2}), this complication did not arise in Lemma~\ref{L:Lzetavariety}. Similarily, when $\fg$ is classical $L_{\zeta}$ is necessarily finite dimensional.  However, if one wishes to consider support varieties for $W(n)$, then the issue can no longer be ignored.  To circumvent this difficulty one can instead choose to work with relative support varieties as we now demonstrate.

\begin{prop}\label{P:relativeext} Let $\fg=W(n)$ and let $\zeta _1,\dots, \zeta _s \in \HH^{\bullet}$ be 
homogeneous elements with corresponding Carlson supermodules 
$L_{\zeta_1}, \dots, L_{\zeta_s}$. 
\begin{itemize} 
\item[(a)] Then $\operatorname{Ext}^{\bullet}_{\mathcal{C}}(L_{\zeta_1}\otimes 
\dotsb  \otimes L_{\zeta_s},{\mathbb C})$ is finitely generated over $\HH^{\bullet}$. 
\item[(b)] ${\mathcal V}_\fg(L_{\zeta_1}\otimes 
\dotsb \otimes L_{\zeta_s},{\mathbb C})\subseteq \cap_{i=1}^{s} {\mathcal V}_\fg(L_{\zeta_i},\C)$
\end{itemize} 
\end{prop}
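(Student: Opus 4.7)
The plan is to establish (a) and (b) simultaneously by induction on $s$. Set $M_i := L_{\zeta_1} \otimes \cdots \otimes L_{\zeta_i}$ and $n_i := \deg \zeta_i$. The case $s=0$ is trivial since $\Ext^{\bullet}_{\mathcal{C}}(\C,\C) = \HH^{\bullet}$, which is Noetherian by Theorem~\ref{finitegeneration}. For $s=1$, applying $\Ext^{\bullet}_{\mathcal{C}}(-,\C)$ to the defining sequence $0 \to L_{\zeta_1} \to \Omega^{n_1}(\C) \to \C \to 0$ produces a long exact sequence that sandwiches $\Ext^{\bullet}(L_{\zeta_1},\C)$ between shifts of $\HH^{\bullet}$; since $\HH^{\bullet}$ is a polynomial ring by Theorem~\ref{T:calculatingR}, hence a domain, one obtains $\V_{\fg}(L_{\zeta_1},\C) = \mathcal{Z}(\zeta_1)$.

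For the inductive step, tensor the defining sequence $0 \to L_{\zeta_s} \to \Omega^{n_s}(\C) \to \C \to 0$ over $\C$ with $M_{s-1}$, yielding the short exact sequence
\[
0 \to M_s \to M_{s-1} \otimes \Omega^{n_s}(\C) \to M_{s-1} \to 0
\]
in $\mathcal{C}$. A projection-formula argument shows that tensoring with $M_{s-1}$ carries a standard projective $U(\fg) \otimes_{U(\fg_0)} L$ to $U(\fg) \otimes_{U(\fg_0)} (L \otimes M_{s-1}|_{\fg_0})$, which remains projective since $M_{s-1}|_{\fg_0}$ is finitely semisimple as a $\fg_0$-module. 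Consequently, applying $M_{s-1} \otimes -$ to a projective resolution of $\C$ yields a projective resolution of $M_{s-1}$, so that $M_{s-1} \otimes \Omega^{n_s}(\C)$ agrees with $\Omega^{n_s}(M_{s-1})$ up to projective summands. This provides the dimension-shift isomorphism
\[
\Ext^k_{\mathcal{C}}(M_{s-1} \otimes \Omega^{n_s}(\C),\C) \cong \Ext^{k+n_s}_{\mathcal{C}}(M_{s-1},\C) \quad (k \geq 1),
\]
under which the connecting $\HH^{\bullet}$-linear map $\Ext^k(M_{s-1},\C) \to \Ext^k(M_{s-1} \otimes \Omega^{n_s}(\C),\C)$ from the long exact sequence becomes multiplication by $\zeta_s$.

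The long exact sequence therefore reads
\[
\cdots \to \Ext^k(M_{s-1},\C) \xrightarrow{\cdot \zeta_s} \Ext^{k+n_s}(M_{s-1},\C) \to \Ext^k(M_s,\C) \to \Ext^{k+1}(M_{s-1},\C) \xrightarrow{\cdot \zeta_s} \cdots .
\]
By Noetherianity of $\HH^{\bullet}$ and the inductive hypothesis, both $\ker(\cdot\zeta_s)$ and $\operatorname{coker}(\cdot\zeta_s)$ are finitely generated $\HH^{\bullet}$-modules, so the short exact sequences
\[
0 \to \operatorname{coker}(\cdot\zeta_s)_k \to \Ext^k(M_s,\C) \to \ker(\cdot\zeta_s)_{k+1} \to 0
\]
extracted from the above force $\Ext^{\bullet}(M_s,\C)$ to be finitely generated, proving (a). For (b), note that $\zeta_s$ annihilates both $\ker(\cdot\zeta_s)$ (by definition) and $\operatorname{coker}(\cdot\zeta_s)$ (the image of multiplication by $\zeta_s$ is killed modulo itself); a standard annihilator-of-an-extension argument then shows $\zeta_s^2$ annihilates $\Ext^{\bullet}(M_s,\C)$. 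Hence $\V_{\fg}(M_s,\C) \subseteq \mathcal{Z}(\zeta_s) = \V_{\fg}(L_{\zeta_s},\C)$. Since the tensor product of supermodules is symmetric up to isomorphism, one may reorder the factors of $M_s$ so that any chosen $\zeta_i$ plays the role of $\zeta_s$, yielding $\V_{\fg}(M_s,\C) \subseteq \V_{\fg}(L_{\zeta_i},\C)$ for each $i$ and hence the intersection inclusion.

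The principal technical obstacle is verifying that the connecting map in the long exact sequence, transported along the syzygy identification, is precisely multiplication by $\zeta_s$. This is standard homological-algebra bookkeeping, but requires some care because $L_{\zeta_s}$ and the syzygies $\Omega^{n_s}(M_{s-1})$ can be infinite dimensional in the $W(n)$ setting, as noted in \S\ref{SS:CarlsonmodsforW}. However the identification itself goes through unchanged, and any low-degree corrections to the dimension-shift isomorphism involve only finitely many degrees and so do not disturb either the finite-generation conclusion or the annihilator computation.
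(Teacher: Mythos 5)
Your proof is correct; part (a) runs along essentially the same lines as the paper's, namely induction on $s$ using the long exact sequence of $\Ext^{\bullet}_{\mathcal{C}}(-,\C)$ and Noetherianity of $\HH^{\bullet}$, though you identify the connecting map explicitly as $\cdot\zeta_s$ whereas the paper works abstractly with the modules $A_{\bullet}$ and $B_{\bullet}$ and never needs that identification for (a). The genuine divergence is in part (b). The paper tensors the defining short exact sequence for $\zeta_i$ by the complementary factor $N$ and invokes the short-exact-sequence property of relative support varieties together with $\V_\fg(\Omega^n(\C)\otimes N,\C) = \V_\fg(N,\C)$, then applies the inductive hypothesis and varies $i$. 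You instead observe that $\zeta_s$ annihilates both the kernel and cokernel of $\cdot\zeta_s$ appearing in the long exact sequence, conclude $\zeta_s^2$ annihilates $\Ext^{\bullet}_{\mathcal{C}}(M_s,\C)$, obtain $\V_\fg(M_s,\C)\subseteq\mathcal{Z}(\zeta_s)$, and reorder the tensor factors. Your version is self-contained --- it does not rely on the short-exact-sequence property for $\V_\fg(-,\C)$, which the paper uses without explicit proof --- and it also yields, already in the base case, the equality $\V_\fg(L_{\zeta},\C)=\mathcal{Z}(\zeta)$ using only that $\HH^{\bullet}$ is a polynomial ring and hence a domain; the paper obtains that equality later (Proposition~\ref{P:Lzetavarietyforg}) by passing through the detecting subalgebra. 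The trade-off is that your part (b), unlike the paper's argument and unlike part (a), genuinely depends on the connecting map being cup product with $\zeta_s$ under the syzygy identification, so that step deserves a careful verification rather than being deferred as bookkeeping.
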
 

\begin{proof} (a) We will prove this by induction on $s$. For $s=1$, consider the 
short exact sequence 
\begin{equation}\label{E:anotherSES}
0\rightarrow L_{\zeta}\xrightarrow{\alpha} \Omega ^n(\C)\xrightarrow{\zeta} \C \rightarrow 0.
\end{equation}

This induces a long exact sequence of $\HH^{\bullet}$-modules: 
\[\dotsb 
\xrightarrow{d} \Ext^{t}_{\mathcal{C}}(\C,\C) \xrightarrow{\zeta_{*}} 
\Ext^{t}_{\mathcal{C}}(\Omega^{n}(\C),\C)\xrightarrow{\alpha_{*}} 
\Ext^{t}_{\mathcal{C}}(L_{\zeta},\C)\xrightarrow{d} \Ext^{t+1}_{\mathcal{C}}\left(\C ,\C  \right) \to \dotsb,  
\] where $\alpha_{*}$ and $\zeta_{*}$ are the maps induced by $\alpha$ and $\zeta,$ respectively, and $d$ denotes the connecting morphism in the long exact sequence. 

For $t \geq 0,$ set 
\begin{align*}
A_{t} &= \Ext^{t}_{\mathcal{C}}(\Omega^{n}(\C),\C)/ \Ker \left(\alpha_{*} \right), \\
B_{t} &= \operatorname{Im} \left(d \right) \subseteq  \Ext^{t+1}_{\mathcal{C}}\left(\C ,\C  \right).
\end{align*}  Let $A_{\bullet}= \oplus_{t} A_{t}$ and $B_{\bullet} = \oplus_{t} B_{t}.$  Note that  
\begin{align*}
\alpha_{*}^{\bullet}&:\Ext^{\bullet}_{\mathcal{C}}(\Omega^{n}(\C),\C)\to \Ext^{\bullet}_{\mathcal{C}}(L_{\zeta},\C) \\
d^{\bullet}:&\Ext^{\bullet}_{\mathcal{C}}(L_{\zeta},\C)\xrightarrow{d} \Ext^{\bullet}_{\mathcal{C}}\left(\C ,\C  \right)
\end{align*} are $\HH^{\bullet}$-module homomorphisms, where $\alpha_{*}^{\bullet}$ and $d^{\bullet}$ are the maps obtained by taking the direct sum of the maps $\alpha_{*}$ and $d,$ respectively.  Hence, $A_{\bullet}$ and $B_{\bullet}$ are $\HH^{\bullet}$-modules and from the long exact sequence given above one has the short exact seqence of $\HH^{\bullet}$-modules, 
\begin{equation}\label{E:SES}
0 \to A_{\bullet} \xrightarrow{\bar{\alpha}_{*}^{\bullet}} \Ext^{\bullet}_{\mathcal{C}}(L_{\zeta},\C) \xrightarrow{\bar{d}^{\bullet}} B_{\bullet} \to 0,
\end{equation} where $\bar{\alpha}_{*}^{\bullet}$ and $\bar{d}^{\bullet}$ are the maps induced by $\alpha_{*}^{\bullet}$ and $d^{\bullet},$ respectively.

However, for all $t \geq 0,$ $\Ext^{t}_{\mathcal{C}}(\Omega^{n}(\C),\C)\cong \Ext^{n+t}_{\mathcal{C}}(\C,\C)$ by degree shifting.  Taking the direct sum of these maps yields an $\HH^{\bullet}$-module isomorphism 
\[
\Ext^{\bullet}_{\mathcal{C}}(\Omega^{n}(\C),\C)\cong \Ext^{\bullet}_{\mathcal{C}}(\C,\C).
\] Therefore $\Ext^{\bullet}_{\mathcal{C}}(\Omega^{n}(\C),\C)$
is finitely generated over $\HH^{\bullet}.$  Since $\HH^{\bullet}$ is a Noetherian ring it follows that the quotient module $A_{\bullet}$ is finitely generated over $\HH^{\bullet}.$  Similarily, since $\Ext^{\bullet}_{\mathcal{C}}(\C,\C)$ is finitely generated over $\HH^{\bullet},$ the submodule $B_{\bullet}$ is also finitely generated. Finally, using \eqref{E:SES} and the fact that $\HH^{\bullet}$ is Noetherian one has that  $\Ext^{\bullet}_{\mathcal{C}}(L_{\zeta},\C)$ must be finitely generated over $\HH^{\bullet}$. 

For the inductive step we claim that if $M$ is a module in $\mathcal{C}$ 
for which $\Ext^{\bullet}_{\mathcal{C}}(M, \C)$ is a finitely 
generated $\HH^{\bullet}$-module and $\zeta$ is a homogeous element of $\HH^{\bullet},$ then $\Ext^{\bullet}_{\C}(M\otimes L_{\zeta}, \C)$ 
is finitely generated over $\HH^{\bullet}$. The argument parallels the base case considered above.  Namely, consider the short exact sequence obtained by tensoring \eqref{E:anotherSES} with $M:$
$$0\rightarrow M\otimes L_{\zeta} \rightarrow
M\otimes \Omega^{n}(\C )  \rightarrow
M\otimes \C 
\rightarrow 0.$$ 
Note that by assumption (i) $\Ext^{\bullet}_{\mathcal{C}}
(M,\C)$ is finitely generated over $\HH^{\bullet}$, and
(ii) $\Ext^{\bullet}_{\mathcal{C}}(M\otimes\Omega^{n}(\C),\C)$ is 
finitely generated over $\HH^{\bullet}$ by applying a dimension shift argument (as in the $s=1$ case). 
Applying the long exact sequence in cohomology and arguing as in the base case shows that 
$\Ext^{\bullet}_{\mathcal{C}}
(M\otimes L_{\zeta},\C)$ is finitely generated over $\HH^{\bullet}$. 

(b) The statement clearly holds for $s=1$. Now assume that the statement holds for $s-1$ factors. 
For a fixed $i=1,2,\dots,s$, set $N=L_{\zeta_{1}}\otimes \dotsb   \otimes \widehat{L_{\zeta_{i}}}\otimes \dotsb  \otimes L_{\zeta_{s}}$. 
Consider the following short exact sequence given by $\zeta_{i}$, 
$$0\rightarrow L_{\zeta_{i}}\rightarrow \Omega ^n(\C)\rightarrow \C \rightarrow 0.$$ 
By tensoring by $N$ we obtain 
$$0\rightarrow L_{\zeta_{i}}\otimes N\rightarrow \Omega ^n(\C)\otimes N\rightarrow N \rightarrow 0.$$  
Therefore, by induction
\begin{align*}
\V_\fg(L_{\zeta_1}\otimes 
\dotsb  \otimes L_{\zeta_s},{\mathbb C})&\subseteq \V_\fg(\Omega^{n}(\C)\otimes N,\C)\cup \V_\fg(N,\C)\\
     &= \V_{\fg}(N,\C ) \\
     &\subseteq \V_\fg(L_{\zeta_1},\C)\cap \dotsb  \cap \widehat{\V_\fg(L_{\zeta_i},\C)}\cap \dotsb  \cap \V_\fg(L_{\zeta_s},\C). 
\end{align*}
Since $i$ is arbitrary we conclude that 
$$\V_\fg(L_{\zeta_1}\otimes \dotsb  \otimes L_{\zeta_s},{\mathbb C}) \subseteq 
\cap_{i=1}^{s} \V_\fg(L_{\zeta_i},\C).$$
\end{proof} 

\subsection{Support Varieties for Carlson Supermodules}\label{SS:SupportsforCarlson}  We are now prepared to compute (relative) support varieties for the Carlson supermodules.

\begin{prop}\label{P:Lzetavarietyforg}  Let  $\zeta \in \HH^n$ and $L_{\zeta}$ be given as above. 

If $\fg$ is a classical, stable, and polar Lie superalgebra, then one has
\begin{equation}\label{E:classicalCarlson}
\V_{\fg}(L_{\zeta}) = \resstar \left( \V_{(\fa ,\fa_{\0})}(L_{\zeta})\right) = \mathcal{Z}(\zeta).
\end{equation}

If $\fg =W(n),$ then one has 
\begin{equation}\label{E:WCarlson}
\V_{\fg}(L_{\zeta},{\mathbb C}) = \resstar \left( \V_{(\fa ,\fa_{\0})}(L_{\zeta})\right) = \mathcal{Z}(\zeta).
\end{equation}

\end{prop}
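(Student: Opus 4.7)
The plan is to establish the chain of equalities by combining Lemma~\ref{L:Lzetavariety} with two containments sandwiching $\V_{\fg}(L_{\zeta})$ (classical case) or $\V_{\fg}(L_{\zeta},\C)$ ($W(n)$ case) between copies of $\mathcal{Z}(\zeta).$ Lemma~\ref{L:Lzetavariety} already identifies the detecting-subalgebra variety as $\V_{(\fa,\fa_{\0})}(L_{\zeta}) = \mathcal{Z}(\operatorname{res}(\zeta)),$ so the first substantive task is to verify
\[
\resstar\!\left(\mathcal{Z}(\operatorname{res}(\zeta))\right) = \mathcal{Z}(\zeta),
\]
after which everything reduces to a matching pair of inclusions for the $\fg$ (relative) support variety.

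The identity $\resstar(\mathcal{Z}(\operatorname{res}(\zeta)))=\mathcal{Z}(\zeta)$ is purely an invariant-theoretic statement. Since $\operatorname{res}$ realises $\HH^{\bullet}$ as the $W$-invariant subring of $\HH^{\bullet}(\fa,\fa_{\0};\C)$ for a finite (pseudoreflection) group, the full cohomology ring is integral over $\HH^{\bullet}.$ The going-up theorem makes $\resstar$ surjective on MaxSpec, and a maximal ideal $\mathfrak{m}$ lies in $\mathcal{Z}(\operatorname{res}(\zeta))$ iff $\operatorname{res}(\zeta) \in \mathfrak{m},$ equivalently iff $\zeta \in \operatorname{res}^{-1}(\mathfrak{m}) = \resstar(\mathfrak{m}).$ This gives the equality directly.

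The inclusion $\resstar\!\left(\V_{(\fa,\fa_{\0})}(L_{\zeta})\right) \subseteq \V_{\fg}(L_{\zeta},\C)$ (or $\V_{\fg}(L_{\zeta})$ in the classical case) is the standard containment induced by restriction of cohomology: at the level of ideals it reduces to the fact that $\operatorname{res}^{-1}$ applied to an annihilator shrinks the defining ideal. Combined with the previous paragraph this yields $\mathcal{Z}(\zeta) \subseteq \V_{\fg}(L_{\zeta},\C)$ (respectively $\V_{\fg}(L_{\zeta})$).

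The reverse inclusion is the classical Carlson-module argument. Applying $\Hom_{\mathcal{C}}(-,\C)$ to the defining sequence
\[
0 \to L_{\zeta} \to \Omega^{n}(\C) \xrightarrow{\zeta} \C \to 0
\]
and invoking dimension shifting $\Ext^{t}_{\mathcal{C}}(\Omega^{n}(\C),\C) \cong \Ext^{t+n}_{\mathcal{C}}(\C,\C)$ (exactly as in the proof of Proposition~\ref{P:relativeext}), the resulting long exact sequence becomes the Yoneda-multiplication-by-$\zeta$ sequence. A standard diagram chase then shows that $\zeta$ lies in the annihilator ideal of $\Ext^{\bullet}_{\mathcal{C}}(L_{\zeta},\C),$ and, when $L_{\zeta}$ is finite dimensional, of $\Ext^{\bullet}_{\mathcal{C}}(L_{\zeta},L_{\zeta})$ as well; consequently $\V_{\fg}(L_{\zeta},\C) \subseteq \mathcal{Z}(\zeta),$ completing the sandwich. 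The main obstacle is bookkeeping in the $W(n)$ setting, where $L_{\zeta}$ may be infinite dimensional: one must phrase everything in terms of the relative variety $\V_{\fg}(M,\C),$ rely on Proposition~\ref{P:relativeext} to keep the relevant $\Ext$-modules finitely generated over $\HH^{\bullet},$ and check that the annihilation argument still passes through unchanged since it only uses the long exact sequence together with the identification of the boundary map as Yoneda product with $\zeta.$
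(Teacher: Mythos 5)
Your overall sandwich strategy matches the paper's, and the classical case \eqref{E:classicalCarlson} is essentially correct (modulo the small slip of claiming $\zeta$ rather than $\zeta^{2}$ annihilates $\Ext^{\bullet}_{\mathcal{C}}(L_{\zeta},\C)$ --- the long exact sequence only gives a two-step filtration by $\HH^{\bullet}$-modules each killed by $\zeta$, so in general one only gets $\zeta^{2}$, which is all one needs since the defining ideal is taken up to radical). Your going-up argument for $\resstar\!\left(\mathcal{Z}(\operatorname{res}(\zeta))\right)=\mathcal{Z}(\zeta)$ is in fact more careful than the paper's terse assertion that $\operatorname{res}^{-1}\!\left((\operatorname{res}(\zeta))\right)=(\zeta)$, which is a mild improvement.

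However, in the $W(n)$ case there is a genuine gap. You assert the inclusion $\resstar\!\left(\V_{(\fa,\fa_{\0})}(L_{\zeta})\right)\subseteq \V_{\fg}(L_{\zeta},\C)$ as ``the standard containment induced by restriction of cohomology,'' but the restriction map furnished by Section~\ref{SS:relatingvarieties} is $\resstar:\V_{(\fa,\fa_{\0})}(L_{\zeta},\C)\to \V_{\fg}(L_{\zeta},\C)$ --- it matches \emph{relative} varieties to relative varieties. Lemma~\ref{L:Lzetavariety}, on the other hand, computes the \emph{absolute} variety $\V_{(\fa,\fa_{\0})}(L_{\zeta})$. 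To bridge these you must show that $\V_{(\fa,\fa_{\0})}(L_{\zeta})=\V_{(\fa,\fa_{\0})}(L_{\zeta},\C)$, and that equality is not formal: the paper obtains it by observing that $L_{\zeta}|_{\fa}\cong L_{\operatorname{res}(\zeta)}\oplus P$ with $P$ projective, that $L_{\operatorname{res}(\zeta)}$ lies in the principal block of $\fa$, that the trivial supermodule is the \emph{only} simple supermodule in the principal block (a non-obvious fact from [BKN2, Proposition~5.2.2]), and then invoking [Ben2, Proposition~5.7.1]. Without this block-theoretic input your argument does not actually produce the containment $\mathcal{Z}(\zeta)\subseteq \V_{\fg}(L_{\zeta},\C)$, which is one half of the sandwich. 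You should either reproduce the paper's block argument or supply a different justification for why restricting the absolute $\fa$-variety lands inside the relative $\fg$-variety.
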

  
\begin{proof} We first prove \eqref{E:classicalCarlson}.  Since 
\[
\resstar : \V_{(\fa, \fa_{\0})}(L_\zeta )\rightarrow \V_{\La}(L_\zeta ),
\] one has that
\[
\resstar \left( \V_{(\fa, \fa_{\0})}(L_\zeta ) \right) \subseteq \V_{\La}(L_\zeta ). 
\]  However, by Lemma~\ref{L:Lzetavariety} the variety $\resstar \left( \V_{(\fa, \fa_{\0})}(L_\zeta ) \right)$ is defined by the ideal $\operatorname{res}^{-1}\left((\operatorname{res}(\zeta) )\right) = (\zeta)$, where the equality of ideals follows from the explicit description of the map $\operatorname{res}.$  Therefore, one has 
\begin{equation}\label{E:Z1classical}
\mathcal{Z}(\zeta)  =\resstar \left( \V_{(\fa, \fa_{\0})}(L_\zeta ) \right) \subseteq  \V_{\La}(L_\zeta ). 
\end{equation}

On the other hand, one can use the proof given in  \cite[Proposition 6.13]{Ca2}  to show 
that $\zeta^2$ annihilates $\Ext^\bullet_{\mathcal{C}}(L_\zeta ,L_\zeta )$.  Let $I_\fg(L_\zeta)$ denote the annihilator of $\HH^{\bullet}$ acting on this $\Ext$ group (i.e.\ the ideal which defines the support variety). So we have 
$\zeta^2 \in I_{\La}(L_\zeta)$. This implies that $I_{\La}(L_\zeta )$ contains the ideal generated by $\zeta^2$.
This in turn implies that the radical of the ideal $I_{\La}(L_\zeta)$ contains the ideal generated by $\zeta $.
Thus the variety defined by the ideal $\sqrt{I_{\La}(L_\zeta)}$ is contained in $\mathcal{Z}(\zeta)$, that is,
\begin{equation}\label{E:Z2classical}\V_{\La}(L_\zeta)\subseteq \mathcal{Z}(\zeta).
\end{equation}
Combining equations \eqref{E:Z1classical} and \eqref{E:Z2classical} one has \eqref{E:classicalCarlson}.

To prove \eqref{E:WCarlson} one argues much as above.  Namely, one has
 \[
\resstar : \V_{(\ff, \ff_{\0})}(L_\zeta,\C )\rightarrow \V_{\La}(L_\zeta,\C ).
\] However, recall that $L_{\zeta} \cong L_{\operatorname{res}(\zeta)}\oplus P$ as $\ff$-supermodules where $P$ is a projective $\ff$-supermodule.  Also note that by definition $L_{\operatorname{res}(\zeta)}$ can be assumed to lie within the principal block of $\ff.$  However, by \cite[Proposition 5.2.2]{BKN2} the trivial supermodule is the only simple supermodule in the principal block of $\ff.$  Taken together with \cite[Proposition 5.7.1]{Ben2} these observations imply that
$\V_{(\ff,\ff_{\0})}\left(L_{\zeta} \right)=\V_{(\ff,\ff_{\0})}
\left(L_{\zeta},{\mathbb C} \right)$. Then Lemma~\ref{L:Lzetavariety} implies that the variety 
$\resstar \left( \V_{(\ff, \ff_{\0})}(L_\zeta ) \right)$ is defined by the ideal 
$\operatorname{res}^{-1}\left((\operatorname{res}(\zeta) )\right) = (\zeta)$.  Therefore, one has 
\begin{equation}\label{E:Z1}
\mathcal{Z}(\zeta)  =\resstar \left( \V_{(\ff, \ff_{\0})}(L_\zeta,\C ) \right) \subseteq  
\V_{\La}(L_\zeta,\C ). 
\end{equation}

On the other hand, one can once again use the proof given in  \cite[Proposition 6.13]{Ca2} to show 
that $\zeta^2$ annihilates $\Ext^\bullet_{\mathcal{C}}(L_\zeta ,L_{\zeta})$ when $\fg=W(n)$.  As for finite groups (cf.\ \cite[Section 5.7]{Ben2}), this implies $\zeta^{2}$ annihilates $\Ext^\bullet_{\mathcal{C}}(L_\zeta ,\C)$ and so is an element of the ideal which defines $\V_{\fg}(L_{\zeta},\C ).$  
Just as before this implies
\begin{equation}\label{E:Z2}\V_{\La}(L_\zeta,\C)\subseteq \mathcal{Z}(\zeta).
\end{equation}
Combining equations \eqref{E:Z1} and \eqref{E:Z2} one has \eqref{E:WCarlson}.
\end{proof}

\subsection{Realization Theorem} We are now ready to prove the realization theorem for $\fg$ being either a classical, stable, and polar Lie superalgebra as in \cite{BKN1} or $W(n).$ 
 
\begin{thm} Let $\fg$ be a classical, stable, polar Lie superalgebra or let $\fg = W(n).$  Let $X$ be a conical subvariety of $\V_{\fg}(\C ).$  If $\fg$ is classical, then there exists a finite
dimensional supermodule $M$ in $\mathcal{C}$ such that
$$ \V_{\La}(M)=X.$$
If $\fg =W(n),$ then there exists a supermodule $M$ in $\mathcal{C}$ such that 
$$ \V_{\La}(M,\C)=X.$$
\end{thm}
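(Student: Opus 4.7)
The idea is to construct $M$ explicitly as a tensor product of Carlson supermodules and verify it has the required (relative) support variety. Since the ring $\HH^{\bullet}$ is a finitely generated commutative graded algebra (by Theorem~\ref{finitegeneration}(a) for $W(n)$ and by \cite{BKN1} in the classical case), it is Noetherian. Hence any conical subvariety $X \subseteq \V_\fg(\C)=\MaxSpec(\HH^{\bullet})$ can be written as $X = \mathcal{Z}(I)$ for a homogeneous ideal $I \subseteq \HH^{\bullet}$ generated by finitely many homogeneous elements $\zeta_1, \dotsc , \zeta_s$. The candidate supermodule is $M := L_{\zeta_1} \otimes \cdots \otimes L_{\zeta_s}$.

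\textbf{Classical case.} Here each $L_{\zeta_i}$ is finite dimensional, so $M \in \mathcal{C}$ is also finite dimensional. For $X \subseteq \V_\fg(M)$, I would iterate Lemma~\ref{L:tensorproduct} and apply Proposition~\ref{P:Lzetavarietyforg} to obtain
\[
X = \bigcap_{i=1}^{s} \mathcal{Z}(\zeta_i) = \bigcap_{i=1}^{s} \resstar\bigl(\V_{(\fa,\fa_{\0})}(L_{\zeta_i})\bigr) = \resstar\bigl(\V_{(\fa,\fa_{\0})}(M)\bigr) \subseteq \V_\fg(M).
\]
For the reverse inclusion $\V_\fg(M) \subseteq X$, I would imitate the inductive short-exact-sequence argument used in the proof of Proposition~\ref{P:relativeext}(b) at the level of $\V_\fg(-)$: tensor the defining sequence $0 \to L_{\zeta_i} \to \Omega^{n_i}(\C) \to \C \to 0$ with $N_i := \bigotimes_{j \neq i} L_{\zeta_j}$, use that $\V_\fg\bigl(\Omega^{n_i}(\C) \otimes N_i\bigr) = \V_\fg(N_i)$ (by Schanuel's lemma applied to a tensored projective resolution), and the standard SES property for $\V_\fg(-)$ (from the long exact sequence in Ext) to deduce $\V_\fg(M) \subseteq \V_\fg(N_i)$. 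Induction on $s$ then gives $\V_\fg(M) \subseteq \bigcap_{j \neq i} \V_\fg(L_{\zeta_j}) = \bigcap_{j \neq i} \mathcal{Z}(\zeta_j)$, and intersecting over $i$ produces $\V_\fg(M) \subseteq X$.

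\textbf{$W(n)$ case.} Here $M$ may be infinite dimensional and we aim for $\V_\fg(M, \C) = X$. That $\Ext^{\bullet}_{\mathcal{C}}(M,\C)$ is finitely generated over $\HH^{\bullet}$, together with the inclusion $\V_\fg(M, \C) \subseteq X$, is immediate from Proposition~\ref{P:relativeext} combined with Proposition~\ref{P:Lzetavarietyforg}. For $X \subseteq \V_\fg(M, \C)$, I would pass to the detecting subalgebra $\ff$. Since $L_{\zeta_i}|_{\ff} \cong L_{\operatorname{res}(\zeta_i)} \oplus P_i$ for some projective $P_i$, expanding the tensor product shows that $M|_{\ff}$ decomposes as $L_{\operatorname{res}(\zeta_1)} \otimes \cdots \otimes L_{\operatorname{res}(\zeta_s)}$ plus summands with at least one projective tensor factor (hence projective). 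Invoking the tensor product property (Theorem~\ref{T:fvarietyprops}(a)) and Lemma~\ref{L:Lzetavariety} one obtains $\V_{(\ff,\ff_{\0})}(M) = \bigcap_{i=1}^{s} \mathcal{Z}(\operatorname{res}(\zeta_i))$. Since each $L_{\operatorname{res}(\zeta_i)}$ lies in the principal block of $\ff$, so does their tensor product, and the principal-block argument from the proof of Proposition~\ref{P:Lzetavarietyforg} yields $\V_{(\ff,\ff_{\0})}(M, \C) = \V_{(\ff,\ff_{\0})}(M)$. Finally, taking $\resstar$ and invoking \eqref{E:idealinvariants} (using that each $\zeta_i$ already lies in the $\Sigma_{n-1}$-invariants) gives $\resstar\bigl(\V_{(\ff,\ff_{\0})}(M, \C)\bigr) = \mathcal{Z}(I) = X$, which sits inside $\V_\fg(M, \C)$.

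\textbf{Main obstacle.} The delicate step is the $W(n)$ case, where one must work throughout with the relative variety because the Carlson supermodule $L_\zeta$ can fail to be finite dimensional. The crucial verification is that after restriction to $\ff$ the tensor product $M|_\ff$ still lies (modulo projective summands) in the principal block; only then can the identification $\V_{(\ff,\ff_{\0})}(M) = \V_{(\ff,\ff_{\0})}(M, \C)$ be used to transfer the calculation across. Once that is in place, the ideal-theoretic manipulation of $\Sigma_{n-1}$-invariants via \eqref{E:idealinvariants} closes the argument.
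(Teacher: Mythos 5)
Your proof follows essentially the same approach as the paper: realize $X$ as the intersection of the zero loci of finitely many homogeneous $\zeta_i$, form $M=L_{\zeta_1}\otimes\cdots\otimes L_{\zeta_s}$, use Lemma~\ref{L:tensorproduct} together with Proposition~\ref{P:Lzetavarietyforg} for the inclusion $X\subseteq\V_\fg(M)$ (resp.\ $\V_\fg(M,\C)$), and use tensor subadditivity (cited to \cite{BKN2} in the classical case, Proposition~\ref{P:relativeext}(b) for $W(n)$) for the reverse inclusion. Your re-derivation of tensor subadditivity in the classical case via Schanuel and the long exact sequence, and your explicit verification that $M|_{\ff}$ lies (modulo projectives) in the principal block so that $\V_{(\ff,\ff_{\0})}(M)=\V_{(\ff,\ff_{\0})}(M,\C)$, fill in details that the paper compresses into ``argued similarily,'' but the underlying structure is identical.
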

\begin{proof} First, express $X$ as the zero locus of homogeneous elements $\zeta _1, \dotsc , \zeta _s \in \HH^{\bullet}$. That is, fix homogeneous elements  $\zeta _1, \dotsc , \zeta _n \in \HH^\bullet$ such that 
$$X=\mathcal{Z}(\zeta _1)\cap  \dotsb \cap \mathcal{Z}(\zeta _s).$$
Let $M=L_{\zeta _1}\otimes \dotsb \otimes L_{\zeta _s}$. If $\fg$ is classical then one can combine \eqref{E:classicalCarlson}, Lemma~\ref{L:tensorproduct}, and the fact that $\V_{\fg}(N_{1} \otimes N_{2}) \subseteq \V_{\fg}(N_{1}) \cap \V_{\fg}(N_{2})$ for any two supermodules $N_{1},N_{2}$ in $\mathcal{C}$ (cf. \cite[(4.6.4)]{BKN2}) to obtain
\begin{align*}
X &=\cap_{i=1}^{s} \mathcal{Z}(\zeta_{i}) \\
  &=\cap_{i=1}^{s}\V_{\fg}\left(L_{\zeta_{i}} \right) \\
  &= \cap_{i=1}^{s} \resstar \left(\V_{(\fa ,\fa_{\0})}\left(L_{\zeta_{i}} \right) \right) \\
  &= \resstar \left(\V_{(\fa,\fa_{\0})}(M) \right) \\
  & \subseteq \V_{\fg }\left(M \right) \\
  & \subseteq \cap_{i=1}^{s} \V_{\fg}\left(L_{\zeta_{i}} \right) \\
  &  = \cap_{i=1}^{s} \mathcal{Z}(\zeta_{i}) \\
  & = X.
\end{align*}  It then follows that $\V_{\fg}(M)=X.$

The case when $\fg =W(n)$ is argued similarily using instead \eqref{E:WCarlson} and Proposition~\ref{P:relativeext}(b).
\end{proof}

\subsection{Representation Type} Germoni \cite{Ger} investigated the representation type of the Lie superalgebra 
$\mathfrak{sl}(m|n)$. He proved that if $m,n\geq 2$ then $\mathfrak{sl}(m|n)$ 
has wild representation type. Germoni also conjectured that this should hold for 
blocks of atypicality greater than or equal to two. Later, Shomron \cite{Sho} proved 
that each block of the Lie superalgebra $W(n)$ has wild representation type for $n\geq 3$. 
Both cases are based on studying the $\Ext^{1}$ quiver. 

Recently Farnsteiner \cite[Theorem 3.1]{Far} showed that if the dimension of the support variety of any simple module 
in a  block for a finite group scheme has dimension 
at least three, then the block has wild representation type. The proof chiefly depends upon using the finite group scheme 
analogue of the above realizability result to construct sufficiently many indecomposable modules in the block.  
With these results in mind we present the following conjecture relating the representation type 
of Lie superalgebras with our construction of support varieties for both the 
classical and Cartan type Lie superalgebras. 

\begin{conj}\label{C:reptypeconj} Let $\mathcal{B}$ be a block of $\mathcal{C}$.  If there exists a simple supermodule $S$ in $\mathcal{B}$ 
with $\dim \V_\fg(S)\geq 3,$ then $\mathcal{B}$ has wild representation type. 
\end{conj}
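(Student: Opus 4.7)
The plan is to follow Farnsteiner's strategy from the finite group scheme setting \cite[Theorem 3.1]{Far}, which becomes accessible for $\fg$ thanks to the realization theorem just established. The broad idea is that $\dim \V_{\fg}(S)\geq 3$ provides enough room to cut out a two-parameter family of ``test ideals'' inside $\HH^{\bullet}$, feed each one into the Carlson construction, and obtain a $\mathbb{P}^{2}$-family of pairwise non-isomorphic indecomposable supermodules inside $\mathcal{B}$; the existence of such a family rules out finite and tame representation type by Drozd's tame-wild dichotomy.

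To construct the family I would first pick three homogeneous elements $\zeta_{1},\zeta_{2},\zeta_{3}\in \HH^{\bullet}$ whose common zero locus meets $\V_{\fg}(S)$ only at the origin; such a triple exists by a generic-position argument because $\dim \V_{\fg}(S)\geq 3$. For each nonzero $c=(c_{1},c_{2},c_{3})\in \mathbb{A}^{3}$ set $\zeta_{c}=c_{1}\zeta_{1}+c_{2}\zeta_{2}+c_{3}\zeta_{3}$ and form the Carlson supermodule $L_{\zeta_{c}}$. By Proposition~\ref{P:Lzetavarietyforg} one has $\V_{\fg}(L_{\zeta_{c}})=\mathcal{Z}(\zeta_{c})$ (working with $\V_{\fg}(-,\C)$ and Proposition~\ref{P:relativeext} in the $W(n)$ case). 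Tensor with $S$ to obtain $M_{c}:=L_{\zeta_{c}}\otimes S$; by construction $M_{c}$ lies in $\mathcal{B}$, and its support contains the one-dimensional cone $\mathcal{Z}(\zeta_{c})\cap \V_{\fg}(S)$ for generic $c$ by the tensor product estimates of Lemma~\ref{L:tensorproduct} (and Proposition~\ref{P:relativeext}(b) for $W(n)$).

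The crucial remaining step is to extract from each $M_{c}$ a distinguished indecomposable summand $N_{c}$, in the spirit of Farnsteiner's ``constantly supported'' modules, whose support variety is precisely the line $\mathcal{Z}(\zeta_{c})\cap \V_{\fg}(S)$. Since rescaling $c$ does not change this line, the natural parameter space is the projective plane $\mathbb{P}^{2}=\mathbb{P}(\langle \zeta_{1},\zeta_{2},\zeta_{3}\rangle)$, and distinct points of $\mathbb{P}^{2}$ yield distinct lines in $\V_{\fg}(S)$, hence (via the support variety invariant) non-isomorphic $N_{c}$'s. A two-parameter family of pairwise non-isomorphic indecomposable objects in $\mathcal{B}$ is exactly what is incompatible with finite or tame representation type.

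The main obstacle, and the reason the statement is left as a conjecture, is transporting the summand-extraction step to the superalgebra setting. Farnsteiner has at his disposal Hopf-algebraic dualities, a fully developed theory of modules of constant support, and the equivalence of stable module categories with triangulated categories of coherent sheaves on $\operatorname{Proj}\HH^{\bullet}$; none of this is in place here. Moreover, for $\fg=W(n)$ the Carlson modules $L_{\zeta}$ are typically infinite dimensional, forcing one to run all the arguments through $\V_{\fg}(-,\C)$ and relative Ext, and in both the classical and Cartan cases one only controls $\fg$-support varieties through the detecting subalgebra \emph{up to} the finite reflection group $W$, lacking any known equality $\resstar\bigl(\V_{(\ff,\ff_{\0})}(M)\bigr)=\V_{(\fg,\fg_{0})}(M)$. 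Pushing Farnsteiner's construction through these intermediate layers, and verifying indecomposability and distinct isomorphism classes at the level of the block $\mathcal{B}$, is precisely the substantive technical content that the conjecture leaves open.
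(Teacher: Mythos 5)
This statement is a \emph{conjecture} in the paper, not a theorem, and the paper offers no proof: the authors merely cite Farnsteiner's result for finite group schemes as motivation and leave the superalgebra version open. You have correctly recognized this. Your ``proposal'' is not so much a proof attempt as a faithful reconstruction of the Farnsteiner-style strategy the authors plainly had in mind (use realizability and Carlson supermodules to cut out a two-parameter family of indecomposables in the block, then invoke tame--wild dichotomy), together with a clear-eyed list of exactly the obstructions that keep the argument from closing in the Lie superalgebra setting: the absence of a fully developed theory of modules of constant support for $\fg$, the infinite-dimensionality of the Carlson supermodules $L_{\zeta}$ when $\fg=W(n)$ forcing everything through relative Ext and $\V_{\fg}(-,\C)$, and the fact that one only controls $\resstar\bigl(\V_{(\ff,\ff_{\0})}(M)\bigr)\subseteq\V_{(\fg,\fg_{0})}(M)$ as an inclusion rather than an equality (cf.\ the open conjecture in \S\ref{SS:relatingvarieties}). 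These are precisely the issues that make the realization theorem of \S\ref{S:realization} fall short of yielding the conjecture, and your diagnosis matches the paper's own framing. There is nothing to correct; you have accurately identified both the intended route and the reasons the statement remains conjectural.
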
 

In light of Conjecture~\ref{C:reptypeconj} and Germoni's conjecture on the representation type of the blocks of $\mathfrak{sl}(m|n),$ it is worthwhile to note that by the 
calculations in \cite{BKN2} one has that if $\mathcal{B}$ is a block of $\mathfrak{gl}(m|n)$ 
of atypicality $k,$ then 
\[
\V_{\fg}(S) \cong \mathbb{A}^{k}
\] for all simple supermodules $S$ in ${\mathcal B}$.

\end{document}